\definecolor{black}{rgb}{0.0, 0.0, 0.0}
\definecolor{red}{rgb}{1.0, 0.5, 0.5}
\newcommand{\margnote}[1]{
\ifthenelse{\boolean{shownotes}}%
{\marginpar{\raggedright\tiny\texttt{#1}}}%
{}%
}
\newcommand{\hole}[1]{
\ifthenelse{\boolean{shownotes}}%
{\begin{center} \fbox{ \rule {.25cm}{0cm} \rule[-.1cm]{0cm}{.4cm}
\parbox{.85\textwidth}{\begin{center} \texttt{#1}\end{center}} \rule
{.25cm}{0cm}}\end{center}} {} }
\title[Synchronization of Kuramoto oscillators with time delay in networks]{Asymptotic synchronization of Kuramoto oscillators with time delay and non-universal interaction}
\author[Carty]{Conor Carty}
\address[Conor Carty]{\newline Department of Mathematics, Yonsei University,
    \newline Seoul 03722, Republic of Korea}
\email{conorcarty@gmail.com}
\author[Choi]{Young-Pil Choi}
\address[Young-Pil Choi]{\newline Department of Mathematics, Yonsei University,
    \newline Seoul 03722, Republic of Korea}
\email{ypchoi@yonsei.ac.kr}
\author[Cicolani]{Chiara Cicolani}
\address[Chiara Cicolani]{\newline Dipartimento di Ingegneria e Scienze dell'Informazione e Matematica, Universit\`a di L'Aquila,
\newline 67100 L'Aquila, Italy }
\email{chiara.cicolani@graduate.univaq.it}
\author[Pignotti]{Cristina Pignotti}
\address[Cristina Pignotti]{\newline Dipartimento di Ingegneria e Scienze dell'Informazione e Matematica, Universit\`a di L'Aquila,
\newline 67100 L'Aquila, Italy }
\email{cristina.pignotti@univaq.it}
\numberwithin{equation}{section}
\newtheorem{theorem}{Theorem}[section]
\newtheorem{lemma}{Lemma}[section]
\newtheorem{remark}{Remark}[section]
\newtheorem{definition}{Definition}[section]
\newtheorem{maintheorem}{Theorem}
\def\({\begin{eqnarray}}
\def\){\end{eqnarray}}
\def\[{\begin{eqnarray*}}
\def\]{\end{eqnarray*}}
\newcommand{\mc}{\mathcal C}
\newcommand{\bq}{\begin{equation}}
\newcommand{\eq}{\end{equation}}
\newcommand{\lt}{\left}
\newcommand{\rt}{\right}
\def\N{\mathbb{N}}
\begin{document}
\allowdisplaybreaks


\keywords{The Kuramoto model, frequency synchronization, time delay, non-universal interaction.}

\begin{abstract}
We study the emergence of synchronization in the Kuramoto model on a digraph in the presence of time delays. Assuming the digraph is strongly connected, we first establish a uniform bound on the phase diameter and subsequently prove the asymptotic frequency synchronization of the oscillators under suitable assumptions on the initial configurations. In the case of an all-to-all connection, we obtain an exponential synchronization estimate. Additionally, we present numerical simulations, providing further insights into the synchronization and oscillatory behaviors of the oscillator frequencies depending on the network structure and the magnitude of the time delay.
\end{abstract}

\maketitle \centerline{\date}


%
%
%
%

%
\section{Introduction}

Synchronization phenomena appear in nature, ranging from the rhythmic flashing of fireflies to the coordinated beating of the heart muscle cells. Understanding the mechanisms underlying such spontaneous synchronization has been a subject of considerable interest across various scientific disciplines, including physics, biology, and engineering \cite{BB66, Str03, PRK03, Wi67}. One of the seminal models that has greatly contributed to our understanding of synchronization is the Kuramoto model \cite{Ku84}; it provides a simple yet powerful framework for studying the emergence of synchronization in large ensembles of coupled oscillators. Over the years, the Kuramoto model has found applications in various fields, including neuroscience \cite{BHD10, ZLQYWL22} and power grids \cite{DB12, CL19} among others. Its mathematical tractability and versatility make it a valuable tool for theoretical investigations as well as for interpreting experimental observations of synchronization phenomena.

In the present work, we analyze the emergence of synchronization in the Kuramoto model on directed graphs, considering the effects of time delays in the coupling between oscillators. More precisely, we investigate the conditions under which synchronization emerges in such systems and explore the influence of network topology and time delays on the synchronization dynamics.  To present our model, we consider a digraph $ \mathcal{G}=(\mathcal{V}, \mathcal{E})$ consisting of a finite set $\mathcal{V}=\{1,\dots,N \}$ of vertices and a set $\mathcal{E} \subset \mathcal{V} \times \mathcal{V}$ of arcs. We assume that Kuramoto oscillators are located at vertices and interact with each other via the underlying network topology. For each vertex $i$, let $\mathcal{N}_i$ be the set of vertices that directly influence the vertex $i,$ namely $\mathcal{N}_i= \left\{\, j\in \{1, \dots, N\} :  (i,j)\in \mathcal{E}\,\right\}$, and we denote by  $N_i$ the number of oscillators in the set $\mathcal{N}_i,$ i.e. $N_i=\left\vert \mathcal{N}_i\right\vert$. Then,  our main system reads 
\begin{equation}\label{main_eq2}
\frac{d}{dt}\theta_i(t) = \Omega_i + \frac\kappa {N-1} \sum_{j\ne i} \chi_{ij} \sin(\theta_j(t - \tau_{ij}) - \theta_i(t)), \quad i = 1,\dots, N, \quad  t > 0,
\end{equation}
where $\theta_i = \theta_i(t)$ represents the phase of $i$-th Kuramoto oscillator at vertex $i$ and time $t>0$, and $\tau_{ij}>0$ denotes communication time delay in the information flow from vertex $j$ to vertex $i$. Here the network topology is given by its $(0,1)$-adjacency matrix $(\chi_{ij})$ defined as
\begin{equation*} \chi_{ij}=\begin{cases} 1 \ \mbox{if $j$ transmits information to $i$}, \\
0 \ \mbox{otherwise}. \end{cases} 
\end{equation*}
Notice that self-time delay is not allowed, i.e. $\tau_{ii}=0$ and we exclude a self-loop, i.e. $i \notin \mathcal{N}_i$ for all $i =1,\dots, N$.  

The system \eqref{main_eq2} describes the dynamic interactions among the oscillators, accounting for both their natural frequencies $\Omega_i$, which is assumed to be a random variable extracted from a given distribution $g = g(\Omega)$, and the influence of neighboring oscillators weighted by the coupling strength $\kappa > 0$ and communication delays $\tau_{ij}$. The classical Kuramoto model, extensively studied in the literature (see, e.g. \cite{ABPRS05, BS00, CCHKK14, CCP19, CHJY12, CS09, DB11, HR20, MS07, Str00}), corresponds to the cases where $\chi_{ij} = 1$ and $\tau_{ij} = 0$ for all $i,j=1,\dots, N$. However, in our formulation \eqref{main_eq2}, we introduce two additional structures, namely the time delay effect and network topology. 

Incorporating time-delayed mathematical formulations is crucial for capturing the dynamics of real-world systems accurately. Time delays are prevalent in various physical and biological systems, stemming from finite signal propagation speeds, processing times, or communication latencies. Neglecting these delays can lead to inaccurate predictions and overlook essential aspects of system behavior. Additionally, taking into account the network topologies in the interaction is very natural. The connectivity structure dictates which oscillators directly influence each other's phases. In \eqref{main_eq2}, the network connectivity determines the phase relations between connected oscillators and influences the overall synchronization behavior. For recent results and more detailed background regarding these extensions, we refer to \cite{CL19, CP21, DB12, DHK20, HLX13, ZZ23, Zhu22, ZhuNHM} and references therein. 

We supply the system \eqref{main_eq2} with the initial data:
\begin{equation}\label{IC2}
\theta_i(s) = \theta^0_i(s) \quad \mbox{for} \quad s \in [-\tau,0],
\end{equation}
where $\theta_i^0\in \mc^1_b(-\tau, 0) \cap \mc[-\tau,0],$ $ i=1,\dots, N$, and  $\tau :=\max_{i,j=1, \dots, N}\, \tau_{ij}$.

In the current work, we focus on the emergence of frequency synchronization in \eqref{main_eq2} with some interaction network and time delay, namely we identify the class of initial configurations leading to that all oscillators tend to have the same frequency as time goes to infinity.  To begin with, we introduce some basic concepts related to the directed graphs. A \emph{path} in a digraph $\mathcal{G}$ from $i_0$ to $i_p$ is a finite sequence $i_0, i_1, \dots, i_p$ of distinct vertices such that each successive pair of vertices is an arc of $\mathcal{G}.$ The integer $p$ is called \emph{length} of the path. If there exists a path from $i$ to $j$, then vertex $j$ is said to be \emph{reachable} from vertex $i$ and we define the distance from $i$ to $j$, $d(i,j)$, as the length of the shortest path from $i$ to $j$. A digraph $\mathcal{G}$ is said to be \emph{strongly connected} if each vertex can be reachable from any vertex. We call the \emph{depth} $\gamma$ of the digraph the quantity:
\begin{equation*}
 \gamma := \max_{1 \leq i,j \leq N} d(i,j). 
\end{equation*}
Thus, any agent can be connected to one other via no more than $\gamma$ intermediate agents. It is trivial that $\gamma \leq N-1.$

We next introduce some notations to be used throughout the paper. For a solution $\theta(t) := (\theta_1(t), \cdots, \theta_N(t))$ to \eqref{main_eq2}, we denote the phase and velocity diameters by
\[
d_\theta(t) :=\max_{1 \leq i,j \leq N}|\theta_i(t) - \theta_j(t)| \quad \mbox{and} \quad d_\omega(t) :=\max_{1 \leq i,j \leq N}|\omega_i(t) - \omega_j(t)|,
\]
respectively, where $\omega_i(t) := \dot\theta_i(t):=\frac{d\theta_i(t)}{dt}$ with $\omega_i(0) := \lim_{t\to 0-} \dot\theta_i(t).$ Along with those notations, we also set
\begin{equation*}
D(\Omega) :=\max_{1 \leq i,j \leq N}|\Omega_i - \Omega_j|,
\end{equation*}
\begin{equation}\label{Dzero}
D_\theta(0) :=\max_{1 \leq i,j \leq N, s,t\in [-\tau, 0]}|\theta_i(s) - \theta_j(t)|,\quad \mbox{and} \quad
D_\omega(0) :=\max_{1 \leq i,j \leq N, s,t\in [-\tau, 0]}|\omega_i(s) - \omega_j(t)|.
\end{equation}

We also provide a notion of our complete frequency synchronization in the definition below.
\begin{definition}\label{def_sync} Let $\theta(t) := (\theta_1(t), \dots, \theta_N(t))$ be the global classical solution to the system \eqref{main_eq2}-\eqref{IC2}. Then the system exhibits the complete frequency synchronization if and only if the velocity diameter tends to 0, as the time goes to infinity:
\[
\lim_{t \to \infty} d_\omega(t) = 0.
\]
\end{definition}

Let us now state our main asymptotic frequency synchronization results in a somewhat rough manner. The precise statements are given in Theorem \ref{theorem_main2} (strongly connected case) and Theorem \ref{theorem_main1} (all-to-all connected case).

\begin{maintheorem} Let $\{\theta_i(t)\}_{i=1}^N$ be a solution to the Kuramoto model \eqref{main_eq2}  with initial data \eqref{IC2}. We have the following asymptotic frequency synchronization results.
\begin{itemize}
\item[(i)] (strongly connected case) Assume the digraph $\mathcal{G}$ is strongly connected, i.e. each vertex can be reachable from any vertex. Suppose that $D_\theta(0) <\pi$, $\tau$ is sufficiently small, and $\kappa$ is large enough. Then the time-delayed Kuramoto oscillators \eqref{main_eq2} achieve the asymptotic complete frequency synchronization in the sense of Definition \ref{def_sync}. 

\item[(ii)] (all-to-all connected case) Assume that all oscillators are connected, i.e., $\chi_{ij} = 1$ for all $i,j=1,\dots, N$. Suppose that $D_\theta(0) <\pi$, $\tau$ is sufficiently small, and $\kappa$ is large enough. Then the time-delayed Kuramoto oscillators \eqref{main_eq2} achieve the asymptotic complete frequency synchronization in the sense of Definition \ref{def_sync} exponentially fast. 
\end{itemize}
\end{maintheorem}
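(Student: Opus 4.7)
My plan is to decouple the problem into two stages, as is standard for delayed Kuramoto-type systems: first establish a uniform-in-time bound on the phase diameter $d_\theta(t)$ that keeps all oscillators confined to an arc of length strictly less than $\pi$, and then leverage this confinement to obtain a dissipation estimate on the frequency diameter $d_\omega(t)$.

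For the phase-diameter step, the strategy is a continuity/bootstrap argument. Fix $D^\infty \in (D_\theta(0), \pi)$ and let $T^\ast$ be the supremum of times on which $d_\theta \le D^\infty$ holds on $[-\tau, T^\ast)$. Differentiating the maximum $d_\theta(t) = \theta_M(t) - \theta_m(t)$ at a.e.\ $t$ via a Danskin-type argument, and splitting each sine term as
\begin{equation*}
\sin(\theta_j(t-\tau_{ij})-\theta_i(t)) = \sin(\theta_j(t)-\theta_i(t)) + \big[\sin(\theta_j(t-\tau_{ij})-\theta_i(t))-\sin(\theta_j(t)-\theta_i(t))\big],
\end{equation*}
one sees that the first piece is contracting as long as every phase lies in an arc of length strictly less than $\pi$, while the second piece is an $O(\kappa\tau)$ error coming from the a priori bound $|\dot\theta_j| \le |\Omega_j| + \kappa$ and the mean-value theorem. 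Choosing $\kappa$ large and $\tau$ small, the contraction beats this error plus the forcing $D(\Omega)$, so $d_\theta$ cannot cross $D^\infty$ and $T^\ast = \infty$.

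With $d_\theta(t) \le D^\infty < \pi$ in hand, the sine admits a uniform linear lower bound $\sin x/x \ge \sin(D^\infty)/D^\infty =: c_\ast > 0$ on the relevant range. For the all-to-all case~(ii) the graph enjoys full mean-field symmetry, and differentiating the system so that $\dot\omega_i$ inherits the structure of a delayed equation on the frequencies yields, for the extremal pair realizing $d_\omega$, a Halanay-type differential inequality
\begin{equation*}
D^+ d_\omega(t) \le -\alpha\, d_\omega(t) + \beta \sup_{s\in[t-\tau,t]} d_\omega(s),
\end{equation*}
with $\alpha \sim \kappa c_\ast$ and $\beta \sim \kappa^2\tau$. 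Under the smallness of $\tau$ and largeness of $\kappa$ we have $\alpha > \beta$, and Halanay's inequality delivers the exponential decay of $d_\omega$.

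The strongly connected case~(i) is the genuine obstacle, because no such direct dissipation estimate holds at every instant: each vertex is only guaranteed to hear from every other via a chain of up to $\gamma$ intermediate arcs. My plan is to adapt the iterative graph-propagation technique developed for delayed Cucker--Smale systems on digraphs. Concretely, I would introduce for each $k=1,\dots,\gamma$ auxiliary diameters restricted to sets of vertices within graph-distance~$k$, and show by induction that over a time window of order $k\tau$ plus a fixed offset one of these diameters contracts by a factor depending on $\kappa$, $c_\ast$ and $\gamma$. Iterating this $\gamma$ times produces a uniform contraction of $d_\omega$ across every window of length proportional to $\gamma\tau$, which summed geometrically gives $d_\omega(t)\to 0$ (the rate in this case depends on the depth $\gamma$ and need not be exponential). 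The principal difficulty is matching the delay window to the graph traversal so that the sign of the dissipation is preserved at every propagation step; this is exactly what forces the smallness of $\tau$ and the largeness of $\kappa$ in the hypothesis.
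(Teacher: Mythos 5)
Your plan for the all-to-all case is close in spirit to what can be made to work, but as written the proposal has two genuine gaps, one of which is fatal for the strongly connected case (i). First, for a general strongly connected digraph you cannot extract a quantitative contraction by differentiating $d_\theta(t)=\theta_M(t)-\theta_m(t)$ directly: the maximal oscillator only hears from its in-neighbors, which may be arbitrarily close to it in phase (think of the unidirectional ring), so the undelayed part of the sine terms is merely nonpositive, not bounded above by $-c\,d_\theta$. Hence the contraction cannot ``beat'' $D(\Omega)+O(\kappa\tau)$ at the exit time and the bootstrap does not close. This is precisely why the paper replaces $d_\theta$ by the convex-combination quantity $q_\theta=\bar\theta_1-\underline\theta_N$ built through the algorithm $\mathcal{A}$ (general roots, Lemmas \ref{Min_index} and \ref{Q_dynamic_strongly_connected}), for which strong connectivity guarantees a dissipation term $-\frac{2\kappa\cos(R_\omega\tau)}{(N-1)c}\,q_\theta$ in Lemma \ref{Main_Strong}; some device of this kind (or a comparable graph-adapted Lyapunov functional) is indispensable, and your outline contains no substitute for it. Second, confinement in an arc of length $D^\infty<\pi$ is not sufficient for the frequency step, in either case: after differentiation the system \eqref{second_order} has coefficients $\cos(\theta_k(t-\tau_{ik})-\theta_i(t))$, and your Halanay coefficient $\alpha$ is positive only if these cosines admit a positive lower bound, i.e. only if $d_\theta(t)+R_\omega\tau<\pi/2$ eventually. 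Your constant $c_\ast=\sin(D^\infty)/D^\infty$ concerns the sine in the phase equation and is irrelevant here; with $D^\infty$ close to $\pi$ the cosines can be negative and the frequency equation is anti-dissipative. The paper supplies exactly this missing step: the Gr\"onwall estimate for $q_\theta$ forces $d_\theta(t)\le d_\infty$ with $d_\infty+R_\omega\tau<\pi/2$ for all $t\ge t_*$ (estimate \eqref{diam_phase_small}), which yields the lower bound $\xi_*$ in \eqref{lower bound} on which both synchronization proofs rest.

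Two further remarks on your frequency-stage plans. For (ii), a Halanay-type inequality can indeed be made to work once the $\xi_*$-bound is available (the paper instead runs a discrete iteration on windowed diameters $D_\omega^*(n)$ \`a la Cucker--Smale with delay, obtaining $D_\omega^*(n+1)\le\tilde C D_\omega^*(n-2)$), but note that the useful dissipation rate is of order $\kappa\xi_*/(N-1)$, coming from retaining the single cross term between the two extremal oscillators, not $\kappa c_\ast$. For (i), your graph-propagation scheme is essentially the paper's Section \ref{sec_syn_str} (an extension of Haskovec's argument), but the contraction factor there is \emph{not} uniform: it is $\Gamma_n$, which degrades with $D_n$ through $\sigma_n=\min\{\tau_0,\,D_n/(4\kappa R_\omega)\}$, because the time interval on which the extremal oscillator stays near its extreme value shrinks with the frequency diameter. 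So ``uniform contraction summed geometrically'' is not available by this route (it would give exponential decay, contradicting your own hedge); the correct conclusion, as in Theorem \ref{theorem_main2}, is $D_{n+1}\le(1-\tilde\Gamma(D_n))D_n$ followed by a limit argument forcing $D=0$.
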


The initial step of our analysis involves establishing a uniform-in-time bound estimate of the phase diameter. Specifically, we demonstrate that for sufficiently large times, the oscillators will be confined within a region of a quarter circle under suitable assumptions on the initial configurations. This bound estimate plays a critical role in unveiling the dissipation structure of \eqref{main_eq2}, and thus it serves as a foundational element for deriving the asymptotic frequency synchronization result. Notably, our strategy improves the previous work \cite{DHK20} where the initial phase diameter $D_\theta(0)$ is assumed to be less than $\pi/2$. In the case of all-to-all connection, i.e., the oscillators are all connected, we achieve a more robust result, namely an exponential asymptotic synchronization estimate. These results extend previous results in the literature. In particular, compared to \cite{CP21}, we introduce considerations for the network structure and pair-dependent delays. Moreover, we significantly relax previous assumptions on the time delay size (see e.g. \cite{CP21, ZhuNHM}).

The rest of this paper is organized as follows. In Section \ref{sec_bdd}, we consider the strongly connected case and provide that the diameter remains bounded by a constant less than $\pi/2$ after some finite time. Sections \ref{sec_syn_str} and \ref{sec_syn_all2} are devoted to presenting the detailed proofs of Theorem \ref{theorem_main2} (strongly connected case) and Theorem \ref{theorem_main1} (all-to-all connected case). Finally, in Section \ref{sec_numer}, we present numerical simulations, illustrating further insights into the synchronization and oscillatory behaviors of the oscillator frequencies depending on the network structure and the magnitude of the time delay.

%
%
%
%
\section{Uniform-in-time bound of phase diameter}\label{sec_bdd}
In this section, we provide the uniform-in-time bound estimate of the phase diameter. Specifically, our goal of this section is to obtain that the phase diameter $d_\theta(t)$ is bounded by for some $d_\infty \in (0,\frac\pi2)$ for any $t$ large enough under suitable assumptions on the initial configurations. For this, we first start with the estimate providing the bound on the difference between the time-delayed and non-time-delayed phases. Note that we can easily find
\[
|\omega_i(t)| \leq |\Omega_i| + \kappa \leq \max_{1 \leq i \leq N} |\Omega_i| + \kappa,
\]
for all $i =1,\cdots, N.$ Let us denote
\begin{equation}\label{erre}
R_\omega:=\max_{1 \leq i \leq N} |\Omega_i| + \kappa,
\end{equation}
then
\begin{equation}\label{bound_velocities}
\vert \omega_i(t)\vert \le R_\omega, \quad \forall\, t\ge 0.
\end{equation}
From \eqref{bound_velocities}, we deduce that for all
$i=1, \dots, N,$ 
\begin{equation*}
\vert \theta_i(t)-\theta_i(s)\vert = \left\vert \int_s^t \omega_i(r)dr\right\vert \le R_\omega\vert t-s\vert, \quad \forall \, s,t\ge 0.
\end{equation*}
In particular, for all $i,j=1, \dots, N,$ we have
\[
\vert\theta_i(t-\tau_{ij})-\theta_i(t)\vert \le R_\omega\tau, \quad t\ge 0.
\]

For the uniform-in-time bounded estimate of phase diameter $d_\theta(t)$, we need to use the dissipative structure of the system \eqref{main_eq2}. For this, motivated from \cite{ZZ23}, we define an ensemble of oscillators as a convex combination denoted by 
\begin{equation*}
\mathcal{L}_l^k\left(C_{l, k}\right):=\sum_{i=l}^k c_i \theta_i, \quad  1 \leq l \leq k \leq N,
\end{equation*}
where all $c_i$ are non-negative and $\sum_{i=l}^k c_i = 1$. We also introduce notions of root and general root in the definition below. 
\begin{definition}[Root and General Root]
\label{Root and General Root}\
\begin{itemize}
    \item[(i)] We say $\theta_k$ is a root if it is not influenced by any other oscillators, i.e. $i \notin \mathcal{N}_k$ for all  $i \neq k$.
    \item[(ii)] An ensemble of oscillators $\mathcal{L}_l^k\left(C_{l, k}\right)$ is a general root if it is not influenced by any oscillators excluded from the ensemble, i.e. $j \notin \mathcal{N}_i$ for all $i \in \{l,\ldots,k\}$ and $j \in\{1, \ldots, N\} \backslash\{l, \ldots, k\}$.
\end{itemize}
\end{definition}

For the analysis, we use the following algorithm, denoted by $\mathcal{A}$, proposed in \cite{HLZ20} for constructing convex combinations of oscillators. 

\medskip

\paragraph{\bf Step I} For any time $t$, we reorder the oscillator indices such that the phases are increasing from minimum to maximum:
\[
\theta_1 (t) \leq \theta_2 (t) \leq \cdots \leq \theta_{N} (t).
\]
For the next steps, we introduce the following sub-algorithms:
\begin{itemize}
    \item[$\left(\mathcal{A}_1\right)$:] If $\overline{\mathcal{L}}_k^{N}\left(\bar{C}_{k, N}\right)$ is not a general root, then we construct (from top to bottom)
$$
\overline{\mathcal{L}}_{k-1}^{N}\left(\bar{C}_{k-1, N}\right)=\frac{\bar{a}_{k-1} \overline{\mathcal{L}}_k^{N}\left(\bar{C}_{k, N}\right)+\theta_{k-1} }{\bar{a}_{k-1}+1} .
$$
    \item[$\left(\mathcal{A}_2\right)$:]  If $\underline{\mathcal{L}}_1^l\left(\underline{C}_{1, l}\right)$ is not a general root, then we construct (from  bottom to top)
$$
    \underline{\mathcal{L}}_1^{l+1}\left(\underline{C}_{1, l+1}\right)=\frac{\underline{a}_{l+1} \underline{\mathcal{L}}_1^l\left(\underline{C}_{1, l}\right)+\theta_{l+1} }{\underline{a}_{l+1}+1}.
$$
\end{itemize}

\medskip

\paragraph{\bf Step II} Since $\mathcal{G}$ is strongly connected, we have that $\overline{\mathcal{L}}_1^{N}\left(\bar{C}_{1, N}\right)$ is a general root, and $\overline{\mathcal{L}}_k^{N}\left(\bar{C}_{k, N}\right)$ is not a general root for $k>1$. Therefore, we may start from $\theta_{N}$ and follow the process $\mathcal{A}_1$ to construct $\overline{\mathcal{L}}_k^{N}\left(\bar{C}_{k, N}\right)$ until $k=1$.

\medskip

\paragraph{\bf Step III} Similarly, $\underline{\mathcal{L}}_1^{N}\left(\underline{C}_{1, N}\right)$ is a general root and $\underline{\mathcal{L}}_1^l\left(\underline{C}_{1, l}\right)$ is not a general root for $l<N$. Therefore, we may analogously start from $\theta_1$ and follow the process $\mathcal{A}_2$ until $l=N$.

\begin{remark}
The coefficients in the above constructions are determined inductively according to
\[
\begin{aligned}
& \overline{\mathcal{L}}_{k-1}^{N}\left(\bar{C}_{k-1, N}\right) \text{ with } \bar{a}_{N} =0, \  \bar{a}_{k-1} =\eta\left(2 N-k+2\right)\left(\bar{a}_k +1\right), \quad 2 \leq k \leq N, \\
& \underline{\mathcal{L}}_1^{k+1}\left(\underline{C}_{1, k+1}\right) \text{ with } \underline{a}_1 =0,\  \underline{a}_{k+1}=\eta\left(k+1+N\right)\left(\underline{a}_k +1\right), \quad 1 \leq k \leq N-1
\end{aligned}
\]
or in summation form:
$$\begin{aligned} 
& \bar{a}_{k-1} =\sum_{j=1}^{N-k+1} \eta^j P\left(2 N-k+2, j\right), \quad 2 \leq k \leq N, \\ 
& \underline{a}_{k+1} =\sum_{j=1}^k \eta^j P\left(k+1+N, j\right), \quad 1 \leq k \leq N-1,
\end{aligned}$$
where $\eta$ is a positive parameter and $P(m,k)$ denotes the $k$-permutations of $m$, i.e. the number of permutations of $k$ elements arranged in a specific order in a set of $m$ elements: 
$$P(m,k):= \frac{m!}{(m-k)!} = m\cdot(m-1) \cdots (m-k+1).$$ 
\end{remark}

We now set
$$
\bar{\theta}_k :=\overline{\mathcal{L}}_k^{N}\left(\bar{C}_{k, N}\right), \quad \underline{\theta}_k :=\underline{\mathcal{L}}_1^k\left(\underline{C}_{1, k}\right), \quad 1 \leq k \leq N,
$$
and define a non-negative $q_\theta(t)$ quantity which will be used to control the phase diameter:
\[
q_\theta:=\bar{\theta}_1 - \underline{\theta}_{N}.
\]
By using those newly defined notations, we state two lemmas on a monotone property of the interaction term and a relation between $q_\theta(t)$ and $d_\theta(t)$ whose proofs can be found in \cite[Lemma 4.1]{ZZ23} and \cite[Lemma 4.2]{ZZ23}, respectively. Here, we notice that these lemmas depend only on the graph structure.

\begin{lemma}\label{Min_index}
Consider the strongly connected network $\mathcal{G}$, with phases well-ordered according to $\mathcal{A}$. Moreover, we assume that the phase diameter and free parameter $\eta$ satisfy
\bq\label{condi_eta}
d_\theta(t)<\zeta < \xi<\pi \quad \mbox{and} \quad \eta>\max \left\{\frac{1}{\sin \xi},\frac{1}{\cos{(R_\omega \tau)}}, \frac{2}{1-\frac{\zeta}{\xi}}\right\},
\eq
respectively. Then, we have
\[
\sum_{i=n}^{N}\left(\eta^{i-n} \min\limits_{\substack{j \in \mathcal{N}_i  \\
j \leq i}} \sin \left(\theta_j -\theta_i \right)\right) \leq \sin \left(\theta_{\bar{k}_n} -\theta_{N} \right)
\]
and
\[
\sum_{i=1}^n\left(\eta^{n-i} \max _{\substack{j \in \mathcal{N}_i   \\
j \geq i}} \sin \left(\theta_j^0-\theta_i^0\right)\right) \geq \sin \left(\theta_{\underline{k}_n} -\theta_1 \right),
\]
where 
\[
\bar{k}_n :=\min _{\substack{j \in \cup_{i=n}^{N} \mathcal{N}_i }} j \quad \mbox{and} \quad \underline{k}_n :=\max _{j \in \cup_{i=1}^n \mathcal{N}_i } j \quad \mbox{for $1 \leq n \leq N$}.
\]
\end{lemma}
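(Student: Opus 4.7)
The plan is to prove both inequalities by induction on $n$: reverse induction starting at $n=N$ for the first inequality and forward induction starting at $n=1$ for the second. The two arguments are mirror images, so I concentrate on the first. For the base case $n=N$, the sum collapses to the single term $M_N:=\min_{j\in\mathcal{N}_N,\,j\leq N}\sin(\theta_j-\theta_N)$. Since self-loops are excluded, $\bar{k}_N=\min\mathcal{N}_N\leq N-1$ belongs to the index set over which the minimum is taken, so $M_N\leq\sin(\theta_{\bar{k}_N}-\theta_N)$ is immediate.

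For the inductive step, suppose the inequality holds at level $n+1$. Peeling off the $i=n$ term and applying the inductive hypothesis yields
$$\sum_{i=n}^{N}\eta^{i-n}M_i \;\leq\; M_n+\eta\,\sin(\theta_{\bar{k}_{n+1}}-\theta_N),$$
so it suffices to establish $M_n+\eta\sin(\theta_{\bar{k}_{n+1}}-\theta_N)\leq \sin(\theta_{\bar{k}_n}-\theta_N)$. I then split on how $\bar{k}_n$ relates to $\bar{k}_{n+1}$. If $\bar{k}_n=\bar{k}_{n+1}$ (equivalently $\min\mathcal{N}_n\geq\bar{k}_{n+1}$), the target reduces to $M_n\leq(1-\eta)\sin(\theta_{\bar{k}_n}-\theta_N)$; since $M_n\leq 0$, $\sin(\theta_{\bar{k}_n}-\theta_N)\leq 0$, and $\eta>1$, the right-hand side is non-negative and the bound is automatic. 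If instead $\bar{k}_n<\bar{k}_{n+1}$, then $\bar{k}_n=\min\mathcal{N}_n\in\mathcal{N}_n$ with $\bar{k}_n\leq n$, so specializing the minimum gives $M_n\leq\sin(\theta_{\bar{k}_n}-\theta_n)$. I then expand via the sine addition formula,
$$\sin(\theta_{\bar{k}_n}-\theta_N)=\sin(\theta_{\bar{k}_n}-\theta_n)\cos(\theta_n-\theta_N)+\cos(\theta_{\bar{k}_n}-\theta_n)\sin(\theta_n-\theta_N),$$
and use the phase ordering $\theta_1\leq\cdots\leq\theta_N$ together with the uniform bound $|\theta_a-\theta_b|\leq d_\theta<\zeta$ to convert the target into a trigonometric inequality whose validity is secured exactly by the thresholds in \eqref{condi_eta}: the bound $\eta>2/(1-\zeta/\xi)$ controls the ratio between sine evaluations at nearby phases, while $\eta>1/\sin\xi$ forces the geometric weight to dominate the dominant-angle residual.

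The main obstacle is the case $\bar{k}_n<\bar{k}_{n+1}$ when $\zeta$ is close to $\pi$: the cosines entering the addition formula are then small and of delicate sign, so each summand must be tracked carefully and bounded away from degeneracy using the ordering and the thresholds above, while the delay-related threshold $\eta>1/\cos(R_\omega\tau)$ is what allows one to later substitute time-delayed phases into the same inequality without loss. The second inequality is obtained by the symmetric argument: forward induction on $n$ starting at $n=1$ with $\underline{k}_1=\max\mathcal{N}_1$, the analogous expansion of $\sin(\theta_{\underline{k}_n}-\theta_1)$ centred at $\theta_1$, and the same three thresholds on $\eta$ absorbing the residuals.
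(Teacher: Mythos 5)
Your overall skeleton is sound and, since the paper itself only cites \cite{ZZ23} (Lemma 4.1 there) rather than proving this statement, the downward induction with the case split on whether $\bar{k}_n=\bar{k}_{n+1}$ is indeed the natural route; the base case and Case A are handled correctly. The problem is that Case B, which is the entire content of the lemma, is not actually proved: after reducing to $\sin(\theta_{\bar{k}_n}-\theta_n)+\eta\,\sin(\theta_{\bar{k}_{n+1}}-\theta_N)\le\sin(\theta_{\bar{k}_n}-\theta_N)$ you assert that the thresholds in \eqref{condi_eta} ``secure'' the resulting trigonometric inequality, and you even flag it as the main obstacle without resolving it. The attribution of hypotheses is also off: $\eta>2/(1-\zeta/\xi)$ plays no role in this lemma (it is needed for Lemma \ref{Q_dynamic_strongly_connected} and in Lemma \ref{Main_Strong}), and what Case B actually requires is only $\eta\ge 1$ together with a structural fact you never establish, namely $\bar{k}_{n+1}\le n$. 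You also assert $\bar{k}_n\le n$ in Case B without justification; both facts come from strong connectivity (the ensemble $\{n+1,\dots,N\}$ cannot be a general root, so some $i\ge n+1$ has a neighbour $j\le n$), which never enters your inductive step.

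This input is essential, not cosmetic. It gives $\theta_{\bar{k}_{n+1}}\le\theta_n$, hence with $x=\theta_n-\theta_{\bar{k}_n}$, $y=\theta_N-\theta_{\bar{k}_{n+1}}$, $z=\theta_N-\theta_{\bar{k}_n}$ one has $\max\{x,y\}\le z\le x+y$ and $z<\zeta<\pi$, and then the needed inequality $\sin z\le\sin x+\eta\sin y$ follows elementarily for $\eta\ge1$: if $x\ge\pi/2$ or $y\ge\pi/2$ use that sine is decreasing on $[\pi/2,\pi]$ and $x,y\le z$; otherwise $0\le z-x\le y<\pi/2$ gives $\sin y\ge\sin(z-x)$ and subadditivity gives $\sin z\le\sin x+\sin(z-x)$. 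Without $\bar{k}_{n+1}\le n$ the reduced inequality is simply false (take $\theta_{\bar{k}_n}=\theta_n$, $\theta_{\bar{k}_{n+1}}$ close to $\theta_N$, and $\theta_N-\theta_n$ close to $\pi/2$), so the connectivity step plus this verification is the real mathematical content that your write-up leaves out. A minor further point: for intermediate $i$ the index set $\{j\in\mathcal{N}_i:\ j\le i\}$ may be empty, so you should fix the convention (value $0$) under which $\min_{j\in\mathcal{N}_i,\ j\le i}\sin(\theta_j-\theta_i)\le 0$ still holds, since your Case A uses that sign.
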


\begin{lemma}\label{Q_dynamic_strongly_connected}
Consider the strongly connected network $\mathcal{G}$, with the coefficients of the convex combinations satisfying the conditions according to algorithm $\mathcal{A}$. Then we have
$$
\beta d_\theta(t) \leq q_\theta(t) \leq d_\theta(t) \mbox{ with } \beta=1-\frac{2}{\eta},
$$
where $\eta$ satisfies \eqref{condi_eta}.
\end{lemma}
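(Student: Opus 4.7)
My plan is to treat the two inequalities separately. The upper bound $q_\theta \leq d_\theta$ should be essentially immediate: after the reordering in Step I we have $\theta_1 \leq \theta_2 \leq \dots \leq \theta_N$, and by construction both $\bar{\theta}_1$ and $\underline{\theta}_N$ are convex combinations of $\theta_1, \dots, \theta_N$. Hence both lie in $[\theta_1, \theta_N]$, and so
$$q_\theta = \bar{\theta}_1 - \underline{\theta}_N \leq \theta_N - \theta_1 = d_\theta.$$

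For the lower bound I would first unwind the two recurrences $\bar{\theta}_{k-1} = (\bar{a}_{k-1}\bar{\theta}_k + \theta_{k-1})/(\bar{a}_{k-1}+1)$ with $\bar{\theta}_N = \theta_N$, and $\underline{\theta}_{k+1} = (\underline{a}_{k+1}\underline{\theta}_k + \theta_{k+1})/(\underline{a}_{k+1}+1)$ with $\underline{\theta}_1 = \theta_1$, in order to pin down the extremal coefficients. A straightforward induction shows that the coefficient of $\theta_N$ in the convex combination $\bar{\theta}_1$ equals
$$p := \prod_{k=1}^{N-1} \frac{\bar{a}_k}{\bar{a}_k+1},$$
while the coefficient of $\theta_1$ in $\underline{\theta}_N$ equals $q := \prod_{k=2}^{N} \underline{a}_k/(\underline{a}_k+1)$. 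Bounding the remaining terms below by $\theta_1$ (respectively above by $\theta_N$) gives $\bar{\theta}_1 \geq p\,\theta_N + (1-p)\,\theta_1$ and $\underline{\theta}_N \leq q\,\theta_1 + (1-q)\,\theta_N$, and subtracting yields $q_\theta \geq (p+q-1)\,d_\theta$.

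It then remains to show $p, q \geq 1 - 1/\eta$. For this I plan to apply the elementary inequality $1 - \prod_k(1-x_k) \leq \sum_k x_k$ for $x_k \in [0,1]$, which reduces the task to bounding $\sum_{k=1}^{N-1} 1/(\bar{a}_k+1)$ and $\sum_{k=2}^{N} 1/(\underline{a}_k+1)$. The recursions in the remark, together with $\bar{a}_N = 0$ and $\underline{a}_1 = 0$, force $\bar{a}_{N-1} = \eta(N+2)$ and $\underline{a}_2 = \eta(N+2)$, and since each subsequent step multiplies by $\eta(2N-k+1) \cdot (\bar{a}_{k+1}+1) > 1$ (resp.\ by $\eta(k+1+N)(\underline{a}_k+1) > 1$), the sequences are monotone in the relevant direction. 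Consequently $\bar{a}_k+1, \underline{a}_k+1 \geq \eta(N+2)$ on the ranges of summation, so each sum is at most $(N-1)/(\eta(N+2)) < 1/\eta$. Adding the two estimates produces $p+q-1 \geq 1 - 2/\eta = \beta$, and hence $q_\theta \geq \beta\,d_\theta$.

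The only real hurdle I expect is the bookkeeping needed to identify the products $p$ and $q$ cleanly from the two recursions; once these are in hand, the $\eta$-factors built into algorithm $\mathcal{A}$ make the numerical estimate essentially a one-liner. Note that the strong-connectivity hypothesis on $\mathcal{G}$ is used only to guarantee that $\mathcal{A}$ terminates with the global convex combinations $\overline{\mathcal{L}}_1^{N}$ and $\underline{\mathcal{L}}_1^{N}$; it plays no further role in the estimate itself, which explains why the lemma is stated as a purely graph-combinatorial fact.
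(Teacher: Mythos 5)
Your argument is correct, and it is self-contained, which is more than the paper itself offers: the paper does not prove Lemma \ref{Q_dynamic_strongly_connected} but simply cites Lemma 4.2 of \cite{ZZ23}, whose proof proceeds in the same spirit (tracking the extremal coefficients of the convex combinations built by $\mathcal{A}$). Your identification of the weight of $\theta_N$ in $\bar{\theta}_1$ as $p=\prod_{k=1}^{N-1}\bar{a}_k/(\bar{a}_k+1)$ and of the weight of $\theta_1$ in $\underline{\theta}_N$ as $q=\prod_{k=2}^{N}\underline{a}_k/(\underline{a}_k+1)$ is exactly right, since $\theta_N$ (resp.\ $\theta_1$) enters the recursion only through the seed $\bar{\theta}_N=\theta_N$ (resp.\ $\underline{\theta}_1=\theta_1$), and the passage from $p+q-1\geq 1-2/\eta$ to $q_\theta\geq \beta d_\theta$ via $1-\prod_k(1-x_k)\leq\sum_k x_k$ is sound. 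One small slip: the ``multiplies by $\eta(2N-k+1)\cdot(\bar{a}_{k+1}+1)$'' phrasing garbles the recursion, but the bound you actually need is immediate without any monotonicity discussion, since for $1\leq k\leq N-1$ one has $\bar{a}_k=\eta(2N-k+1)(\bar{a}_{k+1}+1)\geq\eta(N+2)$ and likewise $\underline{a}_k\geq\eta(N+2)$ for $k\geq 2$, which gives $\sum 1/(\bar{a}_k+1)\leq (N-1)/(\eta(N+2))<1/\eta$ as you claim. Your closing observation that strong connectivity plays no role in the inequality itself (only in making the construction of $\mathcal{A}$ meaningful for the later dynamical estimates, where the general-root property is what matters) is also accurate.
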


We next provide a result concerning the dynamics of $q_\theta(t)$ which will serve as a cornerstone in deriving our phase bound.

\begin{lemma}
\label{Main_Strong}
\textit{Let $\{\theta_i(t)\}_{i=1}^N$ be a solution to the Kuramoto model \eqref{main_eq2} on a strongly connected digraph $\mathcal{G}$, with initial conditions satisfying 
\[
D_\theta(0)<\pi. 
\]
Let $\zeta$ and $\xi$ such that $D_\theta(0) < \zeta < \xi < \pi$ and let $d_\infty, \eta$ be two parameters such that
$$d_\infty < \min\left\{\frac{\pi}{2},d_\theta(0)\right \}, \quad \quad \eta>\max \left\{\frac{1}{\sin \xi},\frac{1}{\cos{(R_\omega \tau)}}, \frac{2}{1-\frac{\zeta}{\xi}}\right\},$$
where $R_\omega$ is the bound on the velocities defined in \eqref{erre}.
Assume the following conditions hold:}
$$
\begin{aligned}
& \tan (R_\omega \tau)<\frac{ \beta d_\infty}{\left(1+\frac{\zeta}{\zeta-D_\theta(0)}\right) 2 (N-1) c},\\
& d_\infty+R_\omega \tau<\frac{\pi}{2}, \quad \kappa>\left(1+\frac{\zeta}{\zeta-D_\theta(0)}\right) \frac{\left(D(\Omega)+2  \kappa \sin (R_\omega \tau)\right)(N-1) c}{2\cos (R_\omega \tau)} \frac{1}{\beta d_\infty},
\end{aligned}
$$
where 
\[
c:=\frac{\left(\sum_{j=1}^{N-1} \eta^j P\left(2 N, j\right)+1\right) \xi}{\sin \xi}. 
\]
Then we have
\[
d_\theta(t) < \xi, \quad \forall\, t \in [0,\infty)
\]
and
\[
\dot q_\theta(t)  \leq D(\Omega)+2  \kappa \sin (R_\omega \tau)-\frac{2\kappa \cos (R_\omega \tau)}{(N-1)c} q_\theta(t), \quad a.e. \ t \in[0,+\infty). 
\]
\end{lemma}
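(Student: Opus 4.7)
The plan is to run a continuation (bootstrap) argument. I would define
\[
T^\ast := \sup\{\, T > 0 : d_\theta(t) < \xi \text{ for all } t \in [0,T]\,\}.
\]
Since $d_\theta(0) \le D_\theta(0) < \zeta < \xi$ and $d_\theta$ is continuous, $T^\ast > 0$. The goal is to show that on $[0,T^\ast)$ the differential inequality for $q_\theta$ forces $d_\theta(t) \le d_\infty < \pi/2 < \xi$, which would rule out $T^\ast < \infty$. On this interval I could apply Lemmas~\ref{Min_index} and~\ref{Q_dynamic_strongly_connected}, provided I also check the slightly stronger bound $d_\theta(t) < \zeta$, which will come out of the same Gr\"onwall estimate.

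First I would differentiate $q_\theta(t) = \bar\theta_1(t) - \underline\theta_N(t)$. At a regular time $t$, after relabeling the oscillators by increasing phase, $\bar\theta_1$ and $\underline\theta_N$ are explicit convex combinations $\sum_i \bar c_i\theta_i$ and $\sum_i \underline c_i\theta_i$ whose weights are prescribed by algorithm $\mathcal{A}$. Substituting \eqref{main_eq2}, the natural-frequency contribution combines into a term bounded by $D(\Omega)$, while the coupling contribution is a double sum of $\chi_{ij}\sin(\theta_j(t-\tau_{ij}) - \theta_i(t))$ weighted by the differences $\bar c_i - \underline c_i$.

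The core trigonometric step is to write
\[
\sin(\theta_j(t-\tau_{ij}) - \theta_i(t)) = \cos(\theta_j(t-\tau_{ij})-\theta_j(t))\sin(\theta_j(t)-\theta_i(t)) + \sin(\theta_j(t-\tau_{ij})-\theta_j(t))\cos(\theta_j(t)-\theta_i(t)),
\]
and to use the a priori bound $|\theta_j(t-\tau_{ij})-\theta_j(t)| \le R_\omega\tau$ from \eqref{bound_velocities}. This separates a main part multiplied by $\cos(R_\omega\tau)$ from an error bounded by $2\kappa\sin(R_\omega\tau)$. Algorithm $\mathcal{A}$ is engineered so that, after telescoping through the coefficients $\bar a_k$ and $\underline a_k$, the main part reduces to cascades of the form $\sum_{i=n}^N \eta^{i-n}\min_{j\in\mathcal{N}_i,\, j\le i}\sin(\theta_j-\theta_i)$ on which Lemma~\ref{Min_index} applies, yielding $\sin(\theta_{\bar k_n}-\theta_N)$ and the symmetric bottom-side bound. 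Using $\sin x/x \ge \sin\xi/\xi$ on $|x|\le\xi$ and summing the permutation factors gives a net coefficient of $-2\kappa\cos(R_\omega\tau)/((N-1)c)$ on $q_\theta$, where the constant $c = (\sum_{j=1}^{N-1}\eta^j P(2N,j)+1)\xi/\sin\xi$ arises exactly from the cascade, and Lemma~\ref{Q_dynamic_strongly_connected} is used to pass from phase differences to $q_\theta$ itself. This produces the claimed differential inequality.

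To close the bootstrap I would integrate via Gr\"onwall. The asymptotic value of $q_\theta$ is $(N-1)c(D(\Omega)+2\kappa\sin(R_\omega\tau))/(2\kappa\cos(R_\omega\tau))$, which the hypothesis on $\kappa$ forces to be strictly less than $\beta d_\infty/(1+\zeta/(\zeta-D_\theta(0)))$. The extra factor $1+\zeta/(\zeta-D_\theta(0))$ is precisely the budget needed to absorb the transient coming from $q_\theta(0)\le D_\theta(0)<\zeta$, so that $q_\theta(t)<\beta d_\infty$ throughout $[0,T^\ast)$. Lemma~\ref{Q_dynamic_strongly_connected} then upgrades this to $d_\theta(t)<d_\infty<\xi$, contradicting $T^\ast<\infty$. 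The main obstacle will be the bookkeeping in the trigonometric step: cleanly extracting the $\cos(R_\omega\tau)$ factor, controlling the $\sin(R_\omega\tau)$ errors uniformly across the double sum, and matching the cascade of permutation coefficients spat out by $\mathcal{A}$ with the constant $c$ in exactly the form stated.
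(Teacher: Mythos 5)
Your derivation of the differential inequality follows the paper's route: the continuation set with $T^\ast$, the angle-addition splitting of $\sin(\theta_j(t-\tau_{ij})-\theta_i(t))$ into a main term carrying a factor $\ge\cos(R_\omega\tau)$ plus an error of size $\kappa\sin(R_\omega\tau)$ (the paper gets the same splitting via the mean-value theorem, $\theta_j(t-\tau_{ij})=\theta_j(t)-\dot\theta_j(t^\ast)\tau_{ij}$), the cascade through algorithm $\mathcal{A}$ with Lemma \ref{Min_index}, the bound $\sin x\ge \frac{\sin\xi}{\xi}x$, and Lemma \ref{Q_dynamic_strongly_connected} to produce the constant $c$. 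Up to that point the proposal is sound and essentially identical to the paper.

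The closing of the bootstrap, however, contains a genuine error. You claim that the hypothesis on $\kappa$ (with the factor $1+\frac{\zeta}{\zeta-D_\theta(0)}$ acting as a ``budget for the transient'') yields $q_\theta(t)<\beta d_\infty$, hence $d_\theta(t)<d_\infty<\xi$, throughout $[0,T^\ast)$. This cannot be true: by hypothesis $d_\infty<d_\theta(0)$, so $q_\theta(0)\ge\beta d_\theta(0)>\beta d_\infty$, i.e. your claimed bound already fails at $t=0$. No multiplicative margin on the asymptotic value can fix this, because the Gr\"onwall solution starts at $q_\theta(0)$, which may be as large as $D_\theta(0)$; the estimate $d_\theta\le d_\infty$ is only obtained \emph{after} a finite time $t_\ast$, and that is the content of the subsequent lemma, not of this one. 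The correct closing, as in the paper, is: Gr\"onwall gives $q_\theta(t)\le\max\{q_\theta(0),\,\frac{(D(\Omega)+2\kappa\sin(R_\omega\tau))(N-1)c}{2\kappa\cos(R_\omega\tau)}\}$; the first entry is handled by $q_\theta(0)\le D_\theta(0)<\zeta$ together with $\beta=1-\frac2\eta>\frac{\zeta}{\xi}$ (this is where the condition $\eta>\frac{2}{1-\zeta/\xi}$ is used, which your sketch never invokes), giving $D_\theta(0)/\beta<\zeta/\beta<\xi$; the second entry is below $\beta d_\infty<\beta\zeta$ by the $\kappa$-condition, giving a bound $<\xi$ as well. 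Hence $d_\theta(T^\ast)\le\frac1\beta\max\{\cdots\}<\xi$, contradicting $d_\theta(T^\ast)=\xi$. Also note that your side remark that the stronger bound $d_\theta(t)<\zeta$ ``will come out of the same Gr\"onwall estimate'' is not justified either: the estimate only controls $d_\theta$ by $\zeta/\beta$ and by $d_\infty$ asymptotically, not by $\zeta$ on all of $[0,T^\ast)$.
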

\begin{remark}\label{rmk_main_strong} From Lemma \ref{Main_Strong}, by applying the Gr\"onwall's lemma, we find
\begin{equation}\label{rev2}
q_\theta(t)  \leq q_\theta(0)\exp\lt( -\frac{ 2\kappa \cos (R_\omega \tau)}{(N-1)c} t\rt)  + \frac{(D(\Omega)+2   \kappa \sin (R_\omega \tau))(N-1)c}{2 \kappa \cos (R_\omega \tau)} \lt(1 - \exp\lt( -\frac{2\kappa \cos (R_\omega \tau)}{(N-1)c} t\rt) \rt).
\end{equation}
In particular, we obtain
\[
q_\theta(t) \leq \max\lt\{d_\theta(0),  \frac{(D(\Omega)+2   \kappa \sin (R_\omega \tau))(N-1)c}{2 \kappa \cos (R_\omega \tau)}\rt\}, \quad \forall \, t\in [0,\infty).
\]
\end{remark}

\begin{proof}[Proof of Lemma \ref{Main_Strong}] 
Let us denote $\Omega_M$ and $\Omega_m$ the maximal and minimal natural frequency, respectively, i.e.
\[
\Omega_M :=\max_{i=1, \dots, N}\, \Omega_i \quad \mbox{and} \quad \Omega_m :=\min_{i=1, \dots, N}\, \Omega_i.
\]
We proceed with the proof in three steps.  

\medskip

\paragraph{\bf Step I} We first define a set $\mathcal{S} : = \left\{T>0: d_\theta(t) < \xi, \ \forall \, t \in[0, T)\right\}$. Since $D_\theta(0) < \xi$ and $d_\theta(t)$ is continuous, the set $\mathcal{S}$ is non-empty. Thus, we can set $T^* := \sup \mathcal{S}$. We claim that $T^* = +\infty$.

Suppose not, i.e. $T^* < + \infty$, then by the continuity of $d_\theta(t)$ we get
\[
d_\theta(t)<\xi, \quad \forall t \in\left[0, T^*\right), \quad d_\theta(T^*)=\xi.
\]
Next, we divide the time interval into sub-intervals in which no two oscillators overlap,
\[
\left[0, T^*\right)=\bigcup_{l} J_l, \quad J_l=\left[t_{l-1}, t_l\right),
\]
where the end-points $t_l$ are the times at which such an overlap occurs. 
We then apply the well-ordering of phases in each interval:
\[
\theta_1 (t) \leq \theta_2 (t) \leq \ldots \leq \theta_{N} (t), \quad t \in J_l .
\]

\medskip

\paragraph{\bf Step II} We claim that for $1 \leq n \leq N$
\bq\label{claim_main}
\dot{\bar{\theta}}_n (t) \leq \Omega_M + \kappa \sin (R_\omega \tau) +  \frac{\kappa \cos (R_\omega \tau)}{(N-1)(\bar{a}_n +1)} \sum_{i=n}^{N} \eta^{i-n} \min _{\substack{j \in \mathcal{N}_i  \\ j \leq i}} \sin \left(\theta_j (t)-\theta_i (t)\right).
\eq
For this, we use the inductive argument. Note that $\bar{\theta}_{N} =\theta_{N} $, and thus by mean-value theorem, we obtain
\[\begin{aligned}
\dot{\theta}_{N}(t) & =\Omega_{N}+ \frac{\kappa}{N -1} \sum_{j \in \mathcal{N}_{N}} \sin \left(\theta_j(t-\tau_{jN})-\theta_{N}(t)\right) \\
& =\Omega_{N} + \frac{\kappa}{N-1} \sum_{j \in \mathcal{N}_{N}} \sin \left(\theta_j(t)-\theta_{N}(t) -\dot{\theta}_{j}(t_{jN}^*)\tau_{jN}\right) \\
& \leq \Omega_M+ \frac{\kappa}{N-1} \sum_{j \in \mathcal{N}_{N}}\left[\sin \left(\theta_j(t) -\theta_{N}(t)\right) \cos (\dot{\theta}_{j}(t_{jN}^*)\tau_{jN}) -\cos \left(\theta_j(t)-\theta_{N}(t)\right) \sin (\dot{\theta}_{j}(t_{jN}^*)\tau_{jN})\right] \\
& \leq \Omega_M +  \kappa \sin (R_\omega \tau) +  \frac\kappa{N-1} \cos (R_\omega \tau)  \min _{j \in \mathcal{N}_{N}} \sin \left(\theta_j(t)-\theta_{N}(t)\right) 
\end{aligned}\]
for some $t_{jN}^* \in (t-\tau_{jN},t)$ and all $t \in [0,T^*)$. This shows that \eqref{claim_main} holds for $n = N$. Now we assume that  \eqref{claim_main} holds for $n \in [2,N-1]$. Note that 
\begin{align*}
& \frac\kappa{N-1}\sum_{j \in \mathcal{N}_{n-1}} \sin (\theta_j(t-\tau_{j (n-1)}) -\theta_{n-1}(t)) \cr
&\quad \leq  \kappa \sin (R_\omega \tau) +  \frac\kappa{N-1} \lt(\sum_{\substack{j \in \mathcal{N}_{n-1} \\ j\leq n-1}} + \sum_{\substack{j \in \mathcal{N}_{n-1} \\ j>n-1}}  \rt)\sin (\theta_j(t) -\theta_{n-1}(t)) \cos (\dot{\theta}_{j}(t_{j(n-1)}^*)\tau_{j(n-1)})\cr
&\quad \leq  \kappa \sin (R_\omega \tau) +  \frac\kappa{N-1}\cos (R_\omega \tau) \min_{\substack{j \in \mathcal{N}_{n-1} \\ j\leq n-1}}\sin (\theta_j(t) -\theta_{n-1}(t)) + \frac\kappa{N-1} \sum_{\substack{j \in \mathcal{N}_{n-1} \\ j>n-1}}   \sin (\theta_j(t) -\theta_{n-1}(t)).
\end{align*}
By using the above together with 
\[
\frac{\bar a_{n-1}}{\bar a_n + 1} = \eta(2N-n+2) = \eta N + \eta + \eta (N - n + 1),
\]
we estimate
\begin{align*}
 \dot{\bar{\theta}}_{n-1} &= \frac{d}{d t}\left(\frac{\bar{a}_{n-1} \bar\theta_{n}+\theta_{n-1}}{\bar{a}_{n-1}+1}\right)=\frac{\bar{a}_{n-1}}{\bar{a}_{n-1}+1} \dot{\bar \theta}_{n}+\frac{1}{\bar{a}_{n-1}+1} \dot{\theta}_{n-1}   \cr
 &\leq \frac{\bar{a}_{n-1}}{\bar{a}_{n-1}+1} \lt( \Omega_M +   \kappa \sin (R_\omega \tau)+ \frac{\kappa \cos (R_\omega \tau)}{(N-1)(\bar{a}_n +1)}  \sum_{i=n}^{N} \eta^{i-n} \min _{\substack{j \in \mathcal{N}_i \\ j \leq i}} \sin \left(\theta_j(t)-\theta_i(t)\right) \rt)\cr
 &\quad + \frac{1}{\bar{a}_{n-1}+1}\lt( \Omega_M + \frac\kappa{N-1} \sum_{j \in \mathcal{N}_{n-1}} \sin (\theta_j(t-\tau_{j(n-1)}) -\theta_{n-1}(t)) \rt) \cr
 &\leq \Omega_M +  \kappa \sin (R_\omega \tau) + \frac{\kappa \cos (R_\omega \tau) N }{(N-1)(\bar a_{n-1}+1)} \sum_{i=n}^{N} \eta^{i-n+1} \min _{\substack{j \in \mathcal{N}_i \\ j \leq i}} \sin \left(\theta_j(t)-\theta_i(t)\right) \\ 
& + \frac{\kappa \cos (R_\omega \tau)}{(N-1)(\bar a_{n-1}+1)} \lt( \sum_{i=n}^{N} \eta^{i-n+1} \min _{\substack{j \in \mathcal{N}_i \\ j \leq i}} \sin \left(\theta_j(t)-\theta_i(t)\right) +    \min_{\substack{j \in \mathcal{N}_{n-1} \\ j\leq n-1}}\sin (\theta_j(t) -\theta_{n-1}(t))\rt) \\ 
& +\frac{\kappa}{(N-1)(\bar a_{n-1}+1)}  \left(\eta\left(N-n+1\right)\cos(R_\omega \tau) \sum_{i=n}^{N} \eta^{i-n} \min _{\substack{j \in \mathcal{N}_i  \\ j \leq i}} \sin \left(\theta_j-\theta_i\right)+\sum_{\substack{j \in \mathcal{N}_{n-1}  \\ j>n-1}} \sin \left(\theta_j-\theta_{n-1}\right)\right).
\end{align*}
Here the third term on the right-hand side is nonpositive, and the fourth term can be written as 
\[
\frac{\kappa \cos (R_\omega \tau)}{(N-1)(\bar a_{n-1}+1)} \lt( \sum_{i=n-1}^{N} \eta^{i-(n-1)} \min _{\substack{j \in \mathcal{N}_i \\ j \leq i}} \sin \left(\theta_j(t)-\theta_i(t)\right)  \rt).
\]
Thus, to prove \eqref{claim_main}, it suffices to show that the last term is nonpositive. 

We first observe from Lemma \ref{Min_index} that
\[
\sum_{i=n}^{N} \eta^{i-n} \min _{\substack{j \in \mathcal{N}_i \\ j \leq i}} \sin \left(\theta_j-\theta_i\right) \leq \sin \left(\theta_{\bar{k}_n}-\theta_{N}\right), \quad \bar{k}_n :=\min _{\substack{j \in \cup_{i=n}^{N} \mathcal{N}_i}} j. 
\]
We then consider the case $\xi > \frac{\pi}{2}$. If $\theta_{N} - \theta_{\bar{k}_{n}} \leq \frac{\pi}{2}$, then by noticing $\bar k_n \leq n-1$ and $\eta \cos(R_\omega \tau)  >1$, we find
\begin{align*}
&\eta\left(N-n+1\right)\cos(R_\omega \tau) \sum_{i=n}^{N} \eta^{i-n} \min _{\substack{j \in \mathcal{N}_i \\ j \leq i}} \sin \left(\theta_j-\theta_i\right)+\sum_{\substack{j \in \mathcal{N}_{n-1} \\ j>n-1}} \sin \left(\theta_j-\theta_{n-1}\right)\cr
&\quad \leq \eta\left(N-n+1\right)\cos(R_\omega \tau)  \sin \left(\theta_{\bar{k}_n}-\theta_{N}\right) + \left(N-n+1\right) \sin (\theta_N - \theta_{n-1})\cr
&\quad \leq 0.
\end{align*}
On the other hand, if $\frac{\pi}{2}<\theta_{N} - \theta_{\bar{k}_{N}} < \xi$, we use 
\[
\eta>\frac{1}{\sin \xi} \quad  \text {and} \quad  \sin \left(\theta_{N} -\theta_{\bar{k}_{n}} \right)>\sin \xi
\]
to see $\eta \sin \left(\theta_{\bar{k}_{N}}-\theta_{N}\right) \leq-1$, and this gives the nonpositivity of the last term. The case for $\xi \leq \frac{\pi}{2}$ follows similarly. 

We now use \eqref{claim_main} and Lemma \ref{Min_index} for $n=1$ and find that 
\begin{align*} \dot{\bar{\theta}}_1 & \leq \Omega_M+  \kappa   \sin (R_\omega \tau)+  \frac{\kappa \cos (R_\omega \tau)}{(N-1)(\bar{a}_1+1)} \sum_{i=1}^{N} \eta^{i-1} \min _{\substack{j \in \mathcal{N}_i \\ j \leq i}} \sin \left(\theta_j-\theta_i\right) \\ 
& \leq \Omega_M+ \kappa \sin (R_\omega \tau)+ \frac{\kappa \cos (R_\omega \tau)}{(N-1)(\bar{a}_1+1)}  \sin \left(\theta_{\bar k_1}-\theta_{N}\right) \\ 
& =\Omega_M+ \kappa \sin (R_\omega \tau)+ \frac{\kappa \cos (R_\omega \tau)}{(N-1)(\bar{a}_1+1)} \sin \left(\theta_1-\theta_{N}\right),
\end{align*}
where we used the strong connectivity of $\mathcal{G}$, and which completes the process for $\mathcal{A}_1$. 

\medskip

\paragraph{\bf Step III} We can similarly build from bottom to top with $\mathcal{A}_2$ to arrive at
\begin{align*} 
\dot{\underline{\theta}}_{N}(t) & \geq \Omega_m - \kappa   \sin (R_\omega \tau)+  \frac{\kappa \cos (R_\omega \tau)}{(N-1)(\underline{a}_N+1)} \sum_{i=1}^{N} \eta^{i-1} \min _{\substack{j \in \mathcal{N}_i \\ j \geq i}} \sin \left(\theta_j-\theta_i\right) \\ 
& \geq \Omega_m -   \kappa \sin (R_\omega \tau)+\frac{\kappa \cos (R_\omega \tau)}{(N-1)(\underline{a}_N+1)} \sin \left(\theta_{\underline{k}_N}-\theta_{1}\right) \\ 
& =\Omega_ m -   \kappa \sin (R_\omega \tau)+  \frac{\kappa \cos (R_\omega \tau)}{(N-1)(\bar{a}_1 +1)} \sin \left(\theta_N-\theta_1\right),
\end{align*}
from which we obtain
$$
\begin{aligned} 
\dot{q}_\theta(t) &  \leq D(\Omega)+2  \kappa \sin (R_\omega \tau) - \frac{2\kappa \cos (R_\omega \tau)}{(N-1)(\bar{a}_1 +1)} \sin \left(\theta_{N}-\theta_1\right) \\ 
& \leq D(\Omega)+2  \kappa \sin (R_\omega \tau)- \frac{2\kappa \cos (R_\omega \tau)}{N-1}  \frac{1}{\sum_{j=1}^{N-1} \eta^j P\left(2 N, j\right)+1} \sin \left(\theta_{N}-\theta_1\right),\end{aligned}
$$
where we used
$$
\bar{a}_1=\sum_{j=1}^{N-1} \eta^j P\left(2 N, j\right) .
$$
Since the function $\frac{\sin x}{x}$ is monotonically decreasing in $(0, \pi]$,  we obtain 
$$
\sin \left(\theta_{N}-\theta_1\right) \geq \frac{\sin \xi}{\xi}\left(\theta_{N}-\theta_1\right).
$$
Moreover, since $q_\theta(t) \leq \theta_{N}(t)-\theta_1(t)$, 
\begin{align}\label{ineq_Q0}
\begin{aligned}
\dot{q}_\theta(t) 
& \leq D(\Omega)+2  \kappa \sin (R_\omega \tau) - \frac{ 2\kappa \cos (R_\omega \tau) }{(N-1)c} q(t)
\end{aligned}
\end{align}
for a.e. $t \in(0,T^*)$. Then, by Lemma \ref{Q_dynamic_strongly_connected} and Remark \ref{rmk_main_strong}, we further have
\[
\beta d_\theta(t) \leq q_\theta(t)  \leq \max\lt\{D_\theta(0),  \frac{(D(\Omega)+2   \kappa \sin (R_\omega \tau))(N-1)c}{2 \kappa \cos (R_\omega \tau)}\rt\}. 
\]
On the other hand, by the hypotheses, we get
$$
\frac{D_\theta(0)}{\beta} < \frac\zeta\beta = \frac\zeta{1 - \frac2\eta} < \xi \quad \mbox{and} \quad \frac{(D(\Omega)+2  \kappa \sin (R_\omega \tau))(N-1)c}{ 2\beta \kappa \cos (R_\omega \tau)} < d_\infty < \zeta < \xi,
$$
and hence,
$$
\xi =  d_\theta(T^*) \leq \frac1\beta\max\left\{D_\theta(0),  \frac{(D(\Omega)+2  \kappa \sin (R_\omega \tau))(N-1)c}{ 2\kappa \cos (R_\omega \tau)}\right\} < \xi.
$$
This is a contradiction, and thus $T^*=\infty$, i.e. $d_\theta(t) < \xi$ for all $[0,\infty)$. Moreover, \eqref{ineq_Q0} holds for a.e. $t \in (0,\infty)$.
\end{proof}

\begin{lemma}
Let $\{\theta_i(t)\}_{i=1}^N$ be a solution to the Kuramoto model \eqref{main_eq2} on a strongly connected digraph $\mathcal{G}$ and assume the hypothesis of Lemma \ref{Main_Strong} holds. Then, there exists a finite time $t_*$ such that 
\begin{equation}\label{diam_phase_small}
d_\theta(t) \leq d_\infty \quad \text { for } t \in\left[t_*,+\infty\right).
\end{equation}
\end{lemma}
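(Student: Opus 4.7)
The plan is to combine the differential inequality for $q_\theta$ from Lemma \ref{Main_Strong} with the Gr\"onwall bound in Remark \ref{rmk_main_strong} and the comparison $\beta d_\theta(t) \le q_\theta(t)$ from Lemma \ref{Q_dynamic_strongly_connected}, and then choose $t_*$ large enough that the transient exponential term becomes negligible.

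First I would note that, since the hypotheses of Lemma \ref{Main_Strong} are in force, the bound \eqref{rev2} holds for all $t \ge 0$. Letting $t \to \infty$ in \eqref{rev2}, the exponential term vanishes and one obtains
\[
\limsup_{t\to\infty} q_\theta(t) \;\le\; \frac{\bigl(D(\Omega)+2\kappa\sin(R_\omega\tau)\bigr)(N-1)c}{2\kappa\cos(R_\omega\tau)}.
\]
The next step is to rewrite the standing assumption on $\kappa$ from Lemma \ref{Main_Strong} in the equivalent form
\[
\frac{\bigl(D(\Omega)+2\kappa\sin(R_\omega\tau)\bigr)(N-1)c}{2\kappa\cos(R_\omega\tau)} \;<\; \frac{\beta d_\infty}{1+\frac{\zeta}{\zeta-D_\theta(0)}} \;<\; \beta d_\infty,
\]
so that the asymptotic upper bound of $q_\theta$ is strictly smaller than $\beta d_\infty$.

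Combining this strict inequality with the explicit Gr\"onwall expression \eqref{rev2}, one sees that there exists a finite time $t_*>0$ at which
\[
q_\theta(t) \;\le\; \beta d_\infty \qquad \text{for all } t \ge t_*,
\]
since the first summand in \eqref{rev2} decays exponentially to zero while the second summand is bounded above by a quantity strictly less than $\beta d_\infty$. Finally, applying the lower comparison $\beta d_\theta(t) \le q_\theta(t)$ from Lemma \ref{Q_dynamic_strongly_connected} (which is applicable because $d_\theta(t)<\xi$ on $[0,\infty)$ by Lemma \ref{Main_Strong}), one divides by $\beta>0$ to obtain \eqref{diam_phase_small}.

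There is no real obstacle here beyond bookkeeping: all the analytic work has been absorbed into Lemma \ref{Main_Strong} and its Gr\"onwall consequence. The only delicate point is verifying that the standing smallness/largeness assumptions of Lemma \ref{Main_Strong} indeed force the asymptotic upper bound of $q_\theta/\beta$ to fall strictly below $d_\infty$, which is precisely what the factor $1+\frac{\zeta}{\zeta-D_\theta(0)}>1$ in the hypothesis on $\kappa$ is designed to guarantee. Once this strict gap is in hand, the finite-time absorption into the ball of radius $d_\infty$ is immediate from the explicit exponential decay of the transient.
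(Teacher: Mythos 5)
Your proposal is correct and follows essentially the same route as the paper: both rely on the Gr\"onwall bound \eqref{rev2}, observe that the hypothesis on $\kappa$ forces its long-time limit $\frac{(D(\Omega)+2\kappa\sin(R_\omega\tau))(N-1)c}{2\kappa\cos(R_\omega\tau)}$ strictly below $\beta d_\infty$, and then transfer the resulting eventual bound on $q_\theta$ to $d_\theta$ via $\beta d_\theta(t)\le q_\theta(t)$. The only cosmetic difference is that the paper packages the right-hand side of \eqref{rev2} as an explicit monotone function $f(t)$ whose limit is computed, whereas you argue directly through the exponential decay of the transient term; the content is the same.
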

\begin{proof}
    We first observe that
\begin{equation}
\label{main_obser}
\frac{(D(\Omega)+2  \kappa \sin (R_\omega \tau))(N-1)c}{2\kappa \cos (R_\omega \tau)} < \beta d_\infty < \beta d_\theta(0) \leq q_\theta(0).
\end{equation}\
Now, we set 
\begin{align*}
f(t) :=
 q_\theta(0) \exp\left(-\frac{2\kappa \cos(R_\omega \tau)}{(N-1)c} t\right)  + \frac{(D(\Omega)+2  \kappa \sin(R_\omega \tau))(N-1)c}{2\kappa \cos(R_\omega \tau)} \left(1 - \exp\left(-\frac{2\kappa \cos(R_\omega \tau)}{(N-1)c} t\right)\right).
\end{align*}
Then $f(t)$ is continuous on $[0,\infty)$, monotonic, and 
\begin{align*}
\lim_{t \to \infty} f(t) &
= \frac{(D(\Omega)+2  \kappa \sin (R_\omega \tau))(N-1)c}{ 2\kappa \cos (R_\omega \tau)}. 
\end{align*}
This together with \eqref{main_obser} yields that there exists $t_* > 0$ such that
\[
f(t) \leq \beta d_\infty, \quad \forall \, t \in [t_*,\infty).
\]
Then, we now use Lemma \ref{Q_dynamic_strongly_connected}, Lemma \ref{Main_Strong}, and our Gr\"onwall inequality \eqref{rev2} to conclude that
\[
d_\theta(t) \leq \frac{1}{\beta} f(t) \leq d_\infty, \quad \forall \, t \in [t_*,\infty).
\]
This completes the proof.
\end{proof}

%
%
%
%
%
%
%
%
%

\section{Asymptotic synchronization: strongly connected case} \label{sec_syn_str}
In this section, we discuss the asymptotic frequency synchronization result for solutions to system \eqref{main_eq2}, i.e. the frequency-diameter decays to zero as time tends to infinity. For this, by differentiating the system \eqref{main_eq2} with respect to $t$, we obtain
\begin{align}\label{second_order}
\begin{aligned}
\frac{d}{dt}\theta_i(t) &= \omega_i(t), \quad i=1,\dots, N, \quad t > 0,\cr
\frac{d}{dt} \omega_i(t) &= \frac\kappa {N-1}\sum_{k\ne i} \chi_{ij}\cos(\theta_k(t-\tau_{ik}) - \theta_i(t)) (\omega_k(t-\tau_{ik}) - \omega_i(t)).
\end{aligned}
\end{align}
In order to prove the diameter decay estimate for solutions to the above second-order system, we need some preliminary results.

In the rest of this section, we assume that the hypotheses of Lemma \ref{Main_Strong} are satisfied. Then, \eqref{diam_phase_small} holds for $t\ge t_*>0.$ Without loss of generality, we may assume $t_*>\tau$, and we deduce the following estimates.
\begin{remark} \label{rem1}
If  $\{\theta_i(t)\}_{i=1}^N$  is a global solution of  \eqref{main_eq}, then 
\begin{equation*}
\begin{split}
|\theta_i(t-\tau_{ik})-\theta_j(t)| & \leq |\theta_i(t-\tau_{ik})-\theta_i(t)|+|\theta_i(t)-\theta_j(t)| \\
& \leq R_\omega \tau+d_\theta(t) \\
& \leq R_\omega \tau + d_\infty < \frac{\pi}{2}, \ \forall\, t\ge t_*.
\end{split}
\end{equation*}
Thus, if we denote $\xi_*=\cos \Big( R_\omega \tau + d_\infty \Big),$ we have that
\begin{equation} \label{lower bound}
\cos (\theta_i(t - \tau_{ik})-\theta_j(t)) \geq  \xi_* >0, \ \forall\, t \geq t_*.
\end{equation}
\end{remark}
Let us denote
\[
 \tau_i := \max_{i\in \mathcal{N}_j} \{\tau_{ji} \} \quad \mbox{and} \quad \tau_0 := \min_{i=1,\dots,N} \{ \tau_i \}.
 \]

\begin{definition}\label{m0eM0}
We define two numbers, $M_0$ and $m_0,$ as 
\begin{equation} \label{Azero}
\begin{split}
M_0:= \max_{i=1,\dots, N} \, \max_{t \in I_0^i}\  \omega_i(t) \quad \mbox{and} \quad
 m_0:= \min_{i=1,\dots,N} \, \min_{t \in I_0^i} \ \omega_i(t),
\end{split}
\end{equation}
respectively, where $I_0^i:= [t_*- \tau_i, t_*].$
\end{definition}

First, we show that the velocities remain bounded. Indeed, we have the following lemma.
\begin{lemma} \label{5.1}
Let $\{\theta_i(t)\}_{i=1}^N$ a global classical solution of  \eqref{main_eq2}. Then, for all $i=1, \dots, N,$ we have 
\begin{equation}\label{bounded_velocities}
m_0 \leq \omega_i(t) \leq M_0, 
\end{equation}
for all $t \geq t_*-\tau_i,$ with $m_0$ and $M_0$ given by \eqref{Azero}.
\end{lemma}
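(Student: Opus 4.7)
The plan is a maximum-principle argument tailored to the delayed structure of \eqref{second_order}. By Remark \ref{rem1}, for every $t\ge t_*$ and every pair $(i,k)$ with $k\in\mathcal{N}_i$, the cosine weight $\cos(\theta_k(t-\tau_{ik})-\theta_i(t))$ appearing in \eqref{second_order} is bounded below by $\xi_*>0$. Hence \eqref{second_order} expresses $\dot\omega_i(t)$ as a strictly positively-weighted combination of the signed differences $\omega_k(t-\tau_{ik})-\omega_i(t)$ over $k\in\mathcal{N}_i$, which is the structural source of the desired invariance.

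To prove $\omega_i(t)\le M_0$, I would argue by contradiction. Fix $\varepsilon>0$ and suppose some $\omega_i(t)$ exceeds $M_0+\varepsilon$ at some $t>t_*$. Since each $\omega_j$ is continuous on $(0,\infty)$ and satisfies $\omega_j(s)\le M_0<M_0+\varepsilon$ on $I_0^j$, the infimum
\[
\bar t := \inf\bigl\{t>t_*\,:\,\exists\, i,\ \omega_i(t)>M_0+\varepsilon\bigr\}
\]
is strictly greater than $t_*$, and by continuity there exists an index $\bar i$ with $\omega_{\bar i}(\bar t)=M_0+\varepsilon$, while $\omega_j(s)<M_0+\varepsilon$ for every $j$ and every $s\in[t_*-\tau_j,\bar t)$.

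The key delay-bookkeeping step is the inequality $\tau_{\bar i k}\le \tau_k$ for every $k\in\mathcal{N}_{\bar i}$, which is immediate from the definition $\tau_k=\max\{\tau_{jk}:k\in\mathcal{N}_j\}$. It ensures $\bar t-\tau_{\bar i k}\in[t_*-\tau_k,\bar t)$, so that $\omega_k(\bar t-\tau_{\bar i k})<M_0+\varepsilon=\omega_{\bar i}(\bar t)$ for every $k\in\mathcal{N}_{\bar i}$. Substituting into \eqref{second_order} at time $\bar t$, every summand is the product of a strictly positive cosine factor (by Remark \ref{rem1}) and a strictly negative velocity difference, so $\dot\omega_{\bar i}(\bar t)<0$. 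On the other hand, by definition of $\bar t$ the function $s\mapsto\omega_{\bar i}(s)-(M_0+\varepsilon)$ is nonpositive on $[t_*,\bar t]$ and vanishes at $\bar t$, which forces its left derivative at $\bar t$ to be nonnegative, a contradiction. Sending $\varepsilon\downarrow 0$ yields $\omega_i(t)\le M_0$ for all $t\ge t_*$, and combined with the defining bound on $I_0^i$ we obtain $\omega_i(t)\le M_0$ for all $t\ge t_*-\tau_i$. The lower bound $\omega_i(t)\ge m_0$ follows by applying the identical argument to $-\omega_i$.

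The only non-routine ingredient is the delay-bookkeeping inequality $\tau_{\bar i k}\le\tau_k$; without it, one could not guarantee that the delayed arguments $\bar t-\tau_{\bar i k}$ remain in a region where an a priori bound on $\omega_k$ has already been established. Everything else is a standard invariance argument, made possible by the positivity of the cosine weights granted by the uniform phase-diameter bound from Section \ref{sec_bdd}.
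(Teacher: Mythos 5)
Your overall strategy --- an $\epsilon$-buffer invariance argument resting on the positivity of the cosine weights from Remark \ref{rem1} and on the bookkeeping inequality $\tau_{\bar i k}\le\tau_k$ (which the paper uses implicitly and you rightly make explicit) --- is the same in spirit as the paper's; the paper closes the argument with a Gr\"onwall inequality on the maximal interval rather than a derivative-sign evaluation at a critical time. However, as written your contradiction step has a genuine gap. From $\bar t:=\inf\{t>t_*:\exists\, i,\ \omega_i(t)>M_0+\epsilon\}$ you may only conclude that $\omega_j(s)\le M_0+\epsilon$ for $s\in(t_*,\bar t)$; the strict inequality you assert on $[t_*-\tau_j,\bar t)$ does not follow, because some $\omega_j$ could \emph{touch} the level $M_0+\epsilon$ at an earlier time without exceeding it, and such a touching time could coincide with one of the delayed arguments $\bar t-\tau_{\bar i k}$. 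Without strict negativity of the delayed differences you only obtain $\dot\omega_{\bar i}(\bar t)\le 0$, which is perfectly compatible with the left derivative of $\omega_{\bar i}-(M_0+\epsilon)$ at $\bar t$ being $\ge 0$ (both can vanish, e.g.\ a cubic-type crossing), so the contradiction is not forced.

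The gap is easily repaired. Define instead $\hat t:=\inf\{t>t_*:\max_i\omega_i(t)\ge M_0+\epsilon\}$, the first time the level is \emph{attained}. Then $\max_i\omega_i(s)<M_0+\epsilon$ strictly for all $s\in(t_*,\hat t)$ by definition of the infimum, while on $[t_*-\tau_k,t_*]$ one has $\omega_k\le M_0<M_0+\epsilon$; since $\tau_{\bar i k}>0$ for every $k\in\mathcal{N}_{\bar i}$, each delayed argument $\hat t-\tau_{\bar i k}$ lies strictly before $\hat t$, so every difference is strictly negative, every cosine is bounded below by $\xi_*>0$, and $\mathcal{N}_{\bar i}\ne\emptyset$ by strong connectivity, giving $\dot\omega_{\bar i}(\hat t)<0$ and the desired contradiction; then send $\epsilon\downarrow 0$ as you do. Alternatively, you can avoid pointwise strictness altogether by integrating the differential inequality $\dot\omega_i(t)\le\kappa\left(M_0+\epsilon-\omega_i(t)\right)$ as the paper does, which yields the quantitative bound $\omega_i(t)\le M_0+\epsilon-\epsilon e^{-\kappa(\mathcal{S}^{\epsilon}-t_*)}$ on the maximal interval and rules out its right endpoint being finite.
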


\begin{proof} 
Fix
$\epsilon > 0$ and let us set 
$$ {\mathcal T}^{\epsilon}:= \Big \{ t > t_*\  : \  \max_{i=1,...,N} \omega_i(s) < M_0+\epsilon \quad \forall s \in [t_*,t) \Big \}. $$
Since the inequality \eqref{bounded_velocities} is trivial in $[t_*-\tau_i,t_*],$ $i=1, \dots, N,$ by continuity we deduce that ${\mathcal T}^{\epsilon} \neq \emptyset.$ Let us denote ${\mathcal S}^{\epsilon}:= \sup {\mathcal T}^{\epsilon}$. By definition of ${\mathcal T}^{\epsilon}$, trivially ${\mathcal S}^{\epsilon}>0$. We claim that ${\mathcal S}^{\epsilon} = +\infty $. Assume by contradiction that ${\mathcal S}^{\epsilon}< +\infty$. By definition of $ {\mathcal S}^{\epsilon}$ we have that 
\begin{equation*}
\max_{i=1,...,N} \omega_i(t) < M_0 + \epsilon, \quad \forall t \in [t_*, {\mathcal S}^{\epsilon}), 
\end{equation*}
and 
\begin{equation*}
\lim_{t \rightarrow {\mathcal S}^{\epsilon-}} \max_{i=1,...,N} \omega_i(t) = M_0 + \epsilon\,. 
\end{equation*}
For every $i=1,...,N $, $ \forall t \in (t_*, {\mathcal S}^{\epsilon}),$ we obtain
\begin{equation*}
\frac{d}{d t}\omega_i(t)= \frac{\kappa}{N-1} \sum_{k \neq i} \cos(\theta_k(t-\tau_{ik})-\theta_i(t)) ( \omega_k(t-\tau_{ik})-\omega_i(t)). 
\end{equation*}
Note that, from \eqref{lower bound} $\cos(\theta_k(t-\tau_{ik})-\theta_i(t))>0,$ for all $i,k=1, \dots,N,$ and for every $t\ge t_*.$
Moreover, if $t \in (t_*, {\mathcal S}^{\epsilon})$, then $t-\tau_{ik} \in (t_*-\tau, {\mathcal S}^{\epsilon})$ and thus,
$$ \omega_k(t-\tau_{ik})< M_0 + \epsilon, \quad \forall k=1,...,N.$$ 
Then, it follows from \eqref{second_order} that
\begin{align*}
\begin{aligned}
\frac{d}{d t} \omega_i(t)  \leq \frac{\kappa}{N-1} \sum_{k \neq i} \cos(\theta_k(t-\tau_{ik})-\theta_i(t))(M_0+\epsilon -  \omega_i(t)) \leq \kappa (M_0+ \epsilon -  \omega_i(t)),
\end{aligned}
\end{align*}
where we used that also  
$\omega_i(t) < M_0 + \epsilon$, for all $i=1,...,N,$
and thus, $M_0 + \epsilon -  \omega_i(t) \geq 0$.

Using the Gr\"onwall's lemma, we find that 
\begin{align*}
\begin{aligned}  \omega_i(t)& \leq e^{-\kappa (t-t_*)}\omega_i(t_*) + \kappa (M_0+\epsilon) \int_{t_*}^{t} e ^{-\kappa (t-s)} \, ds \cr
& = e^{-\kappa (t-t_*)}  \omega_i(t_*)+ (M_0+\epsilon)(1-e^{-\kappa (t-t_*)}) \cr
& \leq e^{-\kappa (t-t_*)}M_0+M_0+\epsilon - e^{-\kappa (t-t_*)}M_0 - \epsilon e^{-\kappa (t-t_*)} \cr
& = M_0 + \epsilon - \epsilon e^{-\kappa (t-t_*)} \leq M_0 + \epsilon - \epsilon e^{-\kappa ({\mathcal S}^{\epsilon}-t_*)}\,.
\end{aligned}
\end{align*}
This implies
$$ \max_{i=1,...,N} \omega_i(t) \leq M_0 + \epsilon -\epsilon e^{-\kappa (S^{\epsilon}-t_*)}, \quad \forall\, t \in (t_*, {\mathcal S}^{\epsilon}). $$
Taking the limit for $ t \rightarrow {\mathcal S}^{\epsilon-}$, we have that 
$$ \lim_{t\rightarrow {\mathcal S}^{\epsilon-}}  \max_{i=1,...,N} \omega_i(t) \leq M_0 + \epsilon - \epsilon e^{-\kappa ({\mathcal S}^{\epsilon}-t_*)} < M_0 + \epsilon, $$
and this gives a contradiction. Then, ${\mathcal S}^{\epsilon} = +\infty$, and subsequently, we arrive at
$$ \max_{i=1,...,N} \omega_i(t)< M_0 + \epsilon, \quad \forall \,t \ge t_*. $$
Since $\epsilon > 0$ is arbitrary, we conclude that 
$$ \max_{i=1,...,N}\omega_i(t)\leq M_0, $$ and  then
$$ \omega_i(t)\leq M_0, \quad \forall \, t \geq t_*-\tau_i,\  \forall \, i=1,...,N.$$
Applying the above argument to $-\omega_i(t),$ $t \geq t_*,$ we have
\begin{align*}
\begin{aligned}
 - \omega_i(t)& \leq \max_{j=1,...,N} \max_{s \in [t_*-\tau_i, t_*]} \{- \omega_j(s)\} = - \min_{j=1,...,N} \min_{s \in [t_*-\tau_i, t_*]}\omega_j(s) = -m_0.
\end{aligned}
\end{align*}
Hence we deduce
$$ \omega_i(t) \geq m_0,\quad \forall\, t \geq t_*-\tau_i,\  \forall\, i=1,...,N. $$
This concludes the proof.
\end{proof}

Without loss of generality, being system \eqref{main_eq2} invariant by translation, we may assume
\begin{equation*}
0<m_0\le M_0.
\end{equation*}
 
\begin{definition}\label{mneMn}
For all $n \in \mathbb{N}$ we define the quantities $M_n$ and $m_n$ as 
\begin{equation} \label{A}
\begin{split}
M_n:= \max_{i=1,\dots, N} \,\max_{t \in I_n^i} \ \omega_i(t) \quad \mbox{and} \quad 
m_n:= \min_{i=1,\dots,N} \,\min_{t \in I_n^i} \ \omega_i(t),
\end{split}
\end{equation}
respectively, where $I_n^i:=[2\gamma n\tau +t_*- \tau_i, 2\gamma n\tau+t_*].$
\end{definition}

Note that, for $n=0,$ from \eqref{A} we obtain the constants already defined in \eqref{Azero}.

\begin{remark} \label{remark5.1}{
Arguing as in  Lemma \ref{5.1}
we have that
\begin{equation*}
m_n \leq \omega_i(t) \leq M_n, 
\end{equation*}
for all $n \in \mathbb{N}$ and $t \geq 2\gamma n \tau +t_*-\tau_i.$}
\end{remark}

Now, recalling \eqref{erre}, we  define the  quantities: 

\begin{equation} \label{sigma}
\sigma:= \min \Big \{\tau_0, \frac{M_0-m_0}{4\kappa R_\omega}\Big \},
\end{equation}
and
\begin{equation} \label{Gamma}
\Gamma:=\Big(\frac{\xi_*}{N-1}\Big)^{\gamma}e^{-\kappa2\gamma\tau}(1-e^{-\frac{\kappa \tau}{N-1}})^{\gamma-1}(1-e^{-\frac{\kappa}{N-1}\sigma}).
\end{equation}

The following lemma extends to the network structure an argument of \cite{Has21}.
\begin{lemma}\label{lemma 3}
Let $\{\theta_i(t)\}_{i=1}^N$ a global classical solution of  \eqref{main_eq2}. Then,
\begin{equation*}
m_0+\frac{\Gamma}{2}(M_0-m_0) \leq \omega_i(t)\leq M_0-\frac{\Gamma}{2}(M_0-m_0), \quad t \in [t_*+(2\gamma-1)\tau, t_*+2\gamma\tau],
\end{equation*}
for all $i \in \{ 1, \dots, N \},$  where $\Gamma$ is defined by \eqref{Gamma}.
\end{lemma}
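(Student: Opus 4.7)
The plan is to adapt the argument of \cite{Has21} to the digraph setting by propagating extremum information along shortest paths of length at most $\gamma$. By the translation invariance of \eqref{main_eq2} (and symmetry applied to $-\omega_i$), it suffices to prove the lower bound $\omega_i(t) \geq m_0 + \frac{\Gamma}{2}(M_0 - m_0)$ for all $i$; the upper bound will follow symmetrically. Throughout I set $h_i(t) := \omega_i(t) - m_0 \geq 0$, with nonnegativity coming from Remark \ref{remark5.1}.

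As a base case, since $M_0$ is attained, I pick $i_0$ and $\bar t \in I_0^{i_0}$ with $\omega_{i_0}(\bar t) = M_0$. Equation \eqref{second_order} together with the bound \eqref{bound_velocities} gives $|\dot\omega_i| \leq 2\kappa R_\omega$, and combined with the choice $\sigma \leq (M_0 - m_0)/(4\kappa R_\omega)$ in \eqref{sigma} this yields the Lipschitz estimate
\[
h_{i_0}(t) \geq \frac{M_0 - m_0}{2} \qquad \text{for all } t \in [\bar t, \bar t + \sigma],
\]
which serves as the seed lower bound on an interval of length $\sigma$ at the source vertex.

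For a single propagation step I would use \eqref{second_order} together with Remark \ref{rem1} (which provides $\cos(\theta_k(t-\tau_{ik}) - \theta_i(t)) \geq \xi_*$), the nonnegativity of the $h_k$, and $|\mathcal{N}_i| \leq N-1$ to derive a delay differential inequality of the schematic form
\[
\dot h_i(t) \geq -C \, h_i(t) + \frac{\kappa\,\xi_*}{N-1}\sum_{k \in \mathcal{N}_i} h_k(t - \tau_{ik}), \qquad t \geq t_*,
\]
for some $C$ proportional to $\kappa$. A Gr\"onwall-type estimate then turns a lower bound $h_k(s) \geq \delta$ holding on an interval of length $\sigma$ into a lower bound on $h_i$ on a $\tau$-shifted interval, losing precisely the multiplicative ingredients $\xi_*/(N-1)$, $e^{-2\kappa\tau}$, $(1 - e^{-\kappa\tau/(N-1)})$, and $(1 - e^{-\kappa\sigma/(N-1)})$ that multiply out to $\Gamma$ after $\gamma$ iterations.

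To conclude, for every $j \in \{1,\dots,N\}$ strong connectivity of $\mathcal{G}$ and depth $\gamma$ furnish a directed path $i_0 = j_0 \to j_1 \to \cdots \to j_p = j$ with $p \leq \gamma$. Inducting on $p$ and applying the single-step propagation repeatedly gives the lower bound $h_{j_p}(s) \geq \frac{\Gamma}{2}(M_0 - m_0)$ uniformly in $j$, on the interval $[t_* + (2\gamma-1)\tau, t_* + 2\gamma\tau]$. The symmetric argument applied to $M_0 - \omega_i$ produces the matching upper bound. The main technical obstacle is the careful interval bookkeeping across the $\gamma$ propagation steps: one must maintain both the correct length $\sigma$ and the correct location (nested inside $[t_* + (2\gamma-1)\tau, t_* + 2\gamma\tau]$) of the interval on which the lower bound persists, and the factor $2$ in $2\gamma\tau$ reflects a Lipschitz buffer of length $\tau$ at each step needed to absorb the shift $\tau_{j_q, j_{q-1}} \leq \tau$.
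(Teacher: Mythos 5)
Your outline is the same strategy as the paper's proof (seed an extremal oscillator on an interval of length $\sigma$ via $|\dot\omega_i|\le 2\kappa R_\omega$, propagate the improved bound along directed paths of length at most $\gamma$ using the cosine bound $\xi_*$ from Remark \ref{rem1} together with a Gr\"onwall estimate at each hop, then symmetrize), so it is not a different route; the issue is that the step you defer as ``bookkeeping'' is where the lemma actually lives, and as sketched one step would fail. Concretely: your seed time $\bar t$ may be as early as $t_*-\tau_{i_0}$, and your first hop goes to an \emph{arbitrary} out-neighbour $j_1$ on a shortest path to the target $j$. The window on which $j_1$ receives the seed information is $[\bar t+\tau_{j_1 i_0},\,\bar t+\tau_{j_1 i_0}+\sigma]$, and if $\tau_{j_1 i_0}+\sigma<\tau_{i_0}$ this window lies entirely before $t_*$. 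On that range Remark \ref{rem1} gives no lower bound on the cosine (the phase-diameter bound $d_\infty$ is only available for $t\ge t_*$), so the inequality $\dot h_i\ge -C h_i+\frac{\kappa\xi_*}{N-1}\sum_{k\in\mathcal N_i}h_k(t-\tau_{ik})$ you rely on is not justified there, and with a negative cosine a large delayed value of $h_{i_0}$ can even push $\omega_{j_1}$ downward. The paper avoids exactly this by taking the seed interval inside $[t_*-\tau_L,t_*]$ and making the \emph{first} hop to the specific out-neighbour realizing the maximal delay ($\tau_{i_1L}=\tau_L$), which guarantees the received window starts at or after $t_*$.

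The second point hidden in your ``interval bookkeeping'' remark: carrying a bound from one window of length $\sigma$ to a $\tau$-shifted window, hop by hop, does not by itself yield the conclusion, because different vertices sit at different distances $p\le\gamma$ from the seed and their windows would end at different times, whereas the lemma asserts a bound on the \emph{common} interval $[t_*+(2\gamma-1)\tau,\,t_*+2\gamma\tau]$. What the paper does after each hop is to extend the improved bound forward in time over the whole remaining window up to $t_*+2\gamma\tau$ via the crude inequality $\dot h_i\ge-\frac{\kappa N_i}{N-1}h_i$; this exponential persistence (a Gr\"onwall estimate, not a ``Lipschitz buffer'') is what absorbs the arbitrary delays $\tau_{ik}\le\tau$, realigns all vertices onto the final window, and produces the factor $e^{-2\kappa\gamma\tau}$ in $\Gamma$ (equivalently your per-step $e^{-2\kappa\tau}$). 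With these two repairs --- the choice of the first neighbour and the forward-persistence step --- your scheme becomes precisely the paper's argument; without them the induction along an arbitrary shortest path does not close.
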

\begin{proof}
Let $L \in \{1,\dots,N\}$ such that $\omega_L(s)= m_0 $ for some $s \in [t_*-\tau_L, t_*].$ Since it is true that $\omega_L(t) \leq R_\omega$, then we have that $|\dot{\omega}_L(t)| \leq 2 \kappa R_{\omega}$. Therefore, one can find a closed interval $[\alpha_L,\beta_L] \subset [t_*-\tau_L, t_*]$ of length $\sigma$, defined as in \eqref{sigma}, such that
$$ m_0 \leq  \omega_L(t) \leq \frac{M_0+m_0}{2}, \ t \in [\alpha_L,\beta_L]. $$
Let $i_1 \in \{1,\dots,N\}\setminus \{L\}$ such that $\chi_{i_1L}=1$ and $\tau_{i_1L}=\tau_L$. Consider $t \in [\alpha_L+\tau_L, \beta_L+\tau_L].$ From the equation \eqref{second_order} we have: 
\begin{equation*}
\begin{split}
\frac{d}{d t}  \omega_{i_1}(t) &= \frac{\kappa}{N-1} \sum_{\substack{j \neq i_1 \\ j \neq L}} \chi_{i_1j} \cos(\theta_j(t-\tau_{i_1j})-\theta_{i_1}(t)) ( \omega_j(t-\tau_{i_1j})-\omega_{i_1}(t)) \\
&\quad + \frac{\kappa}{N-1}\cos(\theta_L(t-\tau_{i_1L})-\theta_{i_1}(t)) (\omega_L(t-\tau_{i_1L})-\omega_{i_1}(t)).
\end{split}
\end{equation*}
Notice that 
$$ \sum_{j \neq i} \chi_{ij} \cos(\theta_j(t-\tau_{ij})-\theta_i(t)) \leq \sum_{j \neq i} \chi_{ij} = N_i. $$
Then, using Remark \ref{rem1}, for $ t\in [\alpha_L+\tau_L,\beta_L+\tau_L],$ we estimate
\begin{equation*}
\begin{split}
& \frac{d}{d t} \omega_{i_1}(t) \leq \frac{\kappa}{N-1}\sum_{\substack{j \neq i_1 \\ j \neq L}} \chi_{i_1j} \cos(\theta_j(t-\tau_{i_1j})-\theta_{i_1}(t))(M_0-\omega_{i_1}(t)) \\
& \hspace{1,5 cm}+ \frac{\kappa}{N-1}\cos (\theta_L(t-\tau_{i_1L})-\theta_{i_1}(t))\Big(\frac{M_0+m_0}{2}-\omega_{i_1}(t)\Big) \\
& \hspace{1 cm}\leq \frac{\kappa}{N-1} (N_{i_1}-\cos(\theta_L(t-\tau_{i_1L})-\theta_{i_1}(t)))(M_0- \omega_{i_1}(t)) \\
& \hspace{1,5 cm}+ \frac{\kappa}{N-1}\cos (\theta_L(t-\tau_{i_1L})-\theta_{i_1}(t))\Big(\frac{M_0+m_0}{2}- \omega_{i_1}(t)\Big) \\
& \hspace{1 cm}= \frac{\kappa}{N-1} (N_{i_1}-\cos(\theta_L(t-\tau_{i_1L})-\theta_{i_1}(t)))(M_0- \omega_{i_1}(t)) \\
& \hspace{1,5 cm}+ \frac{\kappa}{N-1}\cos (\theta_L(t-\tau_{i_1L})-\theta_{i_1}(t))\Big(\frac{M_0+m_0}{2}- \omega_{i_1}(t)\Big) \pm \frac{\kappa}{N-1}\cos(\theta_L(t-\tau_{i_1L})-\theta_{i_1}(t))M_0 \\
& \hspace{1 cm}= \frac{\kappa}{N-1}N_{i_1}M_0-\cos(\theta_L(t-\tau_{i_1L})-\theta_{i_1}(t))\frac{M_0-m_0}{2} -\frac{\kappa}{N-1}N_{i_1}  \omega_{i_1}(t) \\
& \hspace{1 cm}\leq \frac{\kappa}{N-1}(N_{i_1}M_0-\xi_*\frac{M_0-m_0}{2})-\frac{\kappa}{N-1}N_{i_1}\omega_{i_1}(t).
\end{split}
\end{equation*}
Integrating the above inequality over $t \in [\alpha_L+\tau_L,\beta_L+\tau_{L}]$ gives
$$ \omega_{i_1}(t) \leq e^{-\frac{\kappa}{N-1}N_{i_1}(t-\alpha_L-\tau_{L})} \omega_{i_1}(\alpha_L+\tau_{L}) + \frac{1}{N_{i_1}}\Big( N_{i_1}M_0-\xi_*\frac{M_0-m_0}{2} \Big) (1-e^{-\frac{\kappa}{N-i}N_{i_1}(t-\alpha_L-\tau_{L})}).$$
Putting $t=\beta_L+\tau_{L}$ we find 
\begin{equation}\label{s12} \begin{split} 
\omega_{i_1}(\beta_L+\tau_{L}) & \leq e^{-\frac{\kappa}{N-1}N_{i_1}\sigma} \omega_{i_1}(\alpha_L+\tau_{L}) + \frac{1}{N_{i_1}}\Big(N_{i_1}M_0-\xi_*\frac{M_0-m_0}{2} \Big) (1-e^{-\frac{\kappa}{N-1}N_{i_1}\sigma})\\
& \leq M_0-(1-e^{-\frac{\kappa}{N-1}N_{i_1}\sigma})\xi_*\frac{M_0-m_0}{2N_{i_1}}.
\end{split}
\end{equation}
Denoting
$$ \delta_{-}:=\frac{\xi_*}{2(N-1)}(1-e^{-\frac{\kappa}{N-1}\sigma})\Big(1-\frac{m_0}{M_0} \Big), $$
from \eqref{s12} we have that 
\begin{equation} \label{omega1}
 \omega_{i_1}(\beta_L+\tau_{L}) \leq (1-\delta_{-})M_0.
\end{equation}
Consider now $t \in [\beta_L+\tau_{L}, t_* +2\gamma\tau].$ Note that
\begin{equation*}
\begin{split}
\frac{d}{d t} \omega_{i_1}(t) & = \frac{\kappa}{N-1} \sum_{j \neq i_1} \chi_{i_1j} \cos(\theta_j(t-\tau_{i_1j})-\theta_{i_1}(t)) (\omega_j(t-\tau_{i_1j})-\omega_{i_1}(t))\\
& \leq \frac{\kappa}{N-1}N_{i_1}(M_0- \omega_{i_1}(t)).
\end{split}
\end{equation*} 
Integrating over $[\beta_L+\tau_{L},t]$ with $t \in [\beta_L+\tau_{L}, t_*+2\gamma\tau]$, we obtain 
\begin{equation*}
\begin{split}
 \omega_{i_1}(t) & \leq e^{-\frac{\kappa}{N-1}N_{i_1}(t-\beta_L-\tau_{L})}  \omega_{i_1}(\beta_L+\tau_L) + (1-e^{-\frac{\kappa}{N-1}N_{i_1}(t-\beta_L-\tau_{L})})M_0 \\
& \leq e^{-\frac{\kappa}{N-1}N_{i_1}(t-\beta_L-\tau_{L})} (1-\delta_{-})M_0 + (1-e^{-\frac{\kappa}{N-1}N_{i_1}(t-\beta_L-\tau_{L})})M_0 \\
& \leq M_0(1-e^{-\frac{\kappa}{N-1}N_{i_1}2\gamma\tau}\delta_{-}),
\end{split}
\end{equation*}
where we used \eqref{omega1}.
Using $1 \leq N_{i_1} \leq N-1$, we find 
\begin{equation}\label{s13}
\omega_{i_1}(t) \leq M_0(1-e^{-\kappa2\gamma\tau}\delta_{-}), \ t \in [\beta_L+\tau_{L},t_*+2\gamma\tau].
\end{equation}
Consider now $i_2 \in \{1, \dots, N \} \setminus \{i_1\} $ such that $\chi_{i_2i_1}=1$ and consider $t \in [t_*+2\tau, t_*+2\gamma\tau].$ Again, from equation \eqref{second_order} we have
\begin{equation*}
\begin{split}
\frac{d}{d t}  \omega_{i_2}(t) & = \frac{\kappa}{N-1} \sum_{\substack{j \neq i_2 \\ j \neq i_1}} \chi_{i_2j} \cos(\theta_j(t-\tau_{i_2j})-\theta_{i_2}(t)) (\omega_j(t-\tau_{i_2j})-\omega_{i_2}(t)) \\
& \hspace{0.8 cm}+ \frac{\kappa}{N-1} \cos(\theta_{i_1}(t-\tau_{i_2i_1})-\theta_{i_2}(t))( \omega_{i_1}(t-\tau_{i_2i_1})-\omega_{i_2}(t)) \\
& \leq \frac{\kappa}{N-1}(N_{i_2}-\cos(\theta_{i_1}(t-\tau_{i_2i_1})-\theta_{i_2}(t)))(M_0-\omega_{i_2}(t))\\
&  \hspace{0.8 cm}+\frac{\kappa}{N-1}\cos(\theta_{i_1}(t-\tau_{i_2i_1})-\theta_{i_2}(t))[M_0(1-e^{-\kappa2\gamma\tau}\delta_{-})- \omega_{i_2}(t) ]\\
& \leq \frac{\kappa}{N-1}M_0(N_{i_2}-e^{-\kappa2\gamma\tau}\xi_*\delta_{-})-\frac{\kappa}{N-1}N_{i_2}\omega_{i_2}(t),
\end{split}
\end{equation*}
where we used \eqref{s13}.
Integrating over $[t_*+2\tau,t]$ with $t \in [t_*+2\tau, t_*+2\gamma\tau]$ deduces
\begin{equation*}
\omega_{i_2}(t) \leq M_0 \left [ 1-e^{-\kappa2\gamma\tau}\xi_*\delta_{-}
 \left (\frac{1-e^{-\frac{\kappa}{N-1}(t-t_*-2\tau)}}{N-1} \right )\right ].
\end{equation*}
Then, for $t\in [t_*+3\tau, t_*+2\gamma\tau],$ we have
\begin{equation*}
 \omega_{i_2}(t) \leq M_0 \Big( 1-e^{-\kappa2\gamma\tau}\xi_*\delta_{-} \Big(\frac{1-e^{-\frac {\kappa\tau} {N-1}}}{N-1} \Big)\Big).
\end{equation*}
Iterating this process, along the path starting from $i_1,$  we obtain the following upper bound:
\begin{equation*} 
\omega_{i_n}(t) \leq M_0 \left [ 1-e^{-\kappa2\gamma\tau}\delta_{-} \xi_*^{n-1} \left ( \frac{1-e^{-\frac {\kappa\tau} {N-1}}}{N-1} \right )^{n-1}\right ], 
\end{equation*}
with $n$ such that $2\leq n\leq\gamma$ and $t \in [t_*+(2n-1)\tau, t_*+ 2\gamma\tau].$

Note that, being the digraph strongly connected, from $i_1$ one can reach any other state along a path of length less or equal to the depth $\gamma.$ 
Therefore, for all $i=1, \dots, N,$  we have
\begin{equation} \label{final_above}
  \omega_{i}(t) \leq 
 M_0 \left [ 1-e^{-\kappa2\gamma\tau}\delta_{-}\xi_*^{\gamma-1} \left ( \frac{1-e^{-\frac {\kappa\tau} {N-1}}}{N-1} \right )^{\gamma-1}\right ], \quad t \in [t_*+(2\gamma-1)\tau, t_*+2\gamma\tau].
\end{equation}

Now, consider the state $R \in \{1,\dots,N\}$ such that $\omega_R(s)=M_0$ for some $s \in [t_*-\tau_R, t_*]$. Again, we can find a closed interval $[\alpha_R,\beta_R] \subset [t_*-\tau_R, t_*]$ such that
$$ \frac{m_0+M_0}{2}\leq\omega_R(t) \leq M_0, \ t \in [\alpha_R,\beta_R].$$
Using arguments analogous to the previous ones, we find a lower bound for all $\omega_{i}(t),$ $i=1, \dots, N$:
\begin{equation} \label{final_below}
\omega_i(t) \ge  m_0 \left [ 1+ e^{-\kappa2\gamma\tau} \delta_{+}\xi_*^{\gamma-1}\left ( \frac{1-e^{-\frac{\kappa\tau}{N-1}}}{N-1} \right )^{\gamma-1} \right ], \quad 
t \in [t_*+(2\gamma-1)\tau, t_*+2\gamma\tau],
\end{equation}
with
$$ \delta_{+}:=\frac{\xi_*}{2(N-1)}(1-e^{-\frac{\kappa}{N-1}\sigma})\Big(\frac{M_0}{m_0}-1\Big).$$
Finally, recalling definition \eqref{Gamma}, from \eqref{final_above} and \eqref{final_below}, 
we can find the final estimate 
$$ m_0+\frac{\Gamma}{2}(M_0-m_0)\leq  \omega_i(t) \leq M_0 - \frac{\Gamma}{2}(M_0-m_0), $$
for $i=1, \dots, N$ and $t \in [t_*+(2\gamma-1)\tau, t_*+2\gamma\tau]$.
\end{proof}

%
%
%
%
%
%
%
%
%

We are now able to prove the main result of this paper. 

\begin{theorem}\label{theorem_main2}
Let $\{\theta_i(t)\}_{i=1}^N$ be a solution to the Kuramoto model \eqref{main_eq2} on a strongly connected digraph $\mathcal{G}$, with initial data \eqref{IC2}. Assume that the hypotheses of Lemma \ref{Main_Strong} are satisfied.
Then, the Kuramoto oscillators with delayed coupling achieve the asymptotic complete frequency synchronization in the sense of Definition \ref{def_sync}.
\end{theorem}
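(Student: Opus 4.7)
The plan is to iterate Lemma \ref{lemma 3} on a sequence of time windows of length $2\gamma\tau$, obtaining at each step a strict contraction of the oscillation $a_n := M_n - m_n$. Concretely, I would shift the origin of Lemma \ref{lemma 3} from $t_*$ to $t_* + 2\gamma n\tau$, playing the role of initial data with $(M_n, m_n, I_n^i)$ in place of $(M_0, m_0, I_0^i)$. Observing that
\[
I_{n+1}^i \subset [t_* + 2\gamma n\tau + (2\gamma-1)\tau,\; t_* + 2\gamma(n+1)\tau]
\]
for every $i$ (because $\tau_i \leq \tau$), the conclusion of Lemma \ref{lemma 3} applied at the shifted origin yields
\[
m_{n+1} \geq m_n + \tfrac{\Gamma_n}{2}(M_n - m_n), \qquad M_{n+1} \leq M_n - \tfrac{\Gamma_n}{2}(M_n - m_n),
\]
where $\Gamma_n$ is defined by \eqref{Gamma} with $\sigma$ replaced by $\sigma_n := \min\{\tau_0,\, (M_n - m_n)/(4\kappa R_\omega)\}$. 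In particular, $a_{n+1} \leq (1 - \Gamma_n) a_n$.

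To justify the iteration, I would verify that the hypotheses needed to re-apply Lemma \ref{lemma 3} are preserved at each step: the uniform phase bound \eqref{lower bound}, available for every $t \geq t_*$ thanks to Remark \ref{rem1} and Lemma \ref{Main_Strong}, and the a priori frequency bounds $m_n \leq \omega_i(t) \leq M_n$ on $I_n^i$, which is precisely Remark \ref{remark5.1}. Since the constants $\xi_*$, $\kappa$, $\tau$, $N$, $\gamma$ entering \eqref{Gamma} do not depend on $n$, only $\sigma_n$ varies with $n$, and it stays strictly positive as long as $a_n > 0$; hence $\Gamma_n > 0$, and the sequence $\{a_n\}$ is strictly decreasing while $a_n > 0$.

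Being non-negative and monotone, $a_n$ converges to some $a^* \geq 0$. If $a^* > 0$, then $\sigma_n \geq \min\{\tau_0,\, a^*/(4\kappa R_\omega)\} =: \sigma_* > 0$ for all $n$, which gives $\Gamma_n \geq \Gamma_* > 0$ for all large $n$, and therefore $a_{n+1} \leq (1 - \Gamma_*)a_n$ eventually; the resulting geometric decay forces $a_n \to 0$, contradicting $a^* > 0$. Hence $a_n \to 0$. Finally, Remark \ref{remark5.1} furnishes $\omega_i(t) \in [m_n, M_n]$ for every $t \geq t_* + 2\gamma n\tau$ and every $i$, so $d_\omega(t) \leq a_n$ on $[t_* + 2\gamma n\tau,\, t_* + 2\gamma(n+1)\tau]$, and letting $n \to \infty$ delivers $\lim_{t\to\infty} d_\omega(t) = 0$, which is exactly the assertion of Definition \ref{def_sync}.

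The main obstacle is the clean propagation of Lemma \ref{lemma 3} across iterations: one has to confirm that the inner construction in its proof (the short subinterval $[\alpha_L, \beta_L]$ on which the slowest oscillator remains close to $m_n$, and the forward transport of its influence along a path of length at most $\gamma$ in $\mathcal{G}$) is insensitive to the particular pair $(M_n, m_n)$ and depends only on the structural quantities $\xi_*$, the derivative bound $|\dot\omega_i| \leq 2\kappa R_\omega$, and the depth $\gamma$. All of these are preserved after $t_*$, so once the iteration is set up carefully, the reduction to the scalar recursion $a_{n+1} \leq (1-\Gamma_n)a_n$ makes the asymptotic conclusion immediate.
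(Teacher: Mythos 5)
Your proposal is correct and follows essentially the same route as the paper: iterate Lemma \ref{lemma 3} on the shifted windows to get the scalar recursion $D_{n+1}\leq(1-\Gamma_n)D_n$ with $\Gamma_n$ built from $\sigma_n=\min\{\tau_0,(M_n-m_n)/(4\kappa R_\omega)\}$, then conclude $D_n\to0$ and transfer this to $d_\omega(t)$ via Remark \ref{remark5.1}. Your contradiction argument at the limit (bounding $\Gamma_n$ below when $a^*>0$) is just a slightly more explicit phrasing of the paper's passage to the limit $D\leq(1-\tilde\Gamma(D))D$, so there is no substantive difference.
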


\begin{proof}
We define the quantities $D_n:=M_n-m_n$ and
\begin{equation*}
 \Gamma_n:=\Big(\frac{\xi_*}{N-1}\Big)^{\gamma}e^{-\kappa2\gamma\tau}(1-e^{-\frac{\kappa \tau}{N-1}})^{\gamma-1}(1-e^{-\frac{\kappa \sigma_n}{N-1}}),
\end{equation*}
where $\sigma_n:= \min \{ \tau_0, \frac{M_n-m_n}{4\kappa R_\omega} \}.$ Then, $\Gamma_0$ is the constant $\Gamma$ defined in \eqref{Gamma} and $\Gamma_n \in (0,1)$  for all $n\ge 0.$ Now, we use Lemma \ref{lemma 3} on the intervals $J_n:=[t_*+(2\gamma n-1)\tau, t_*+ 2\gamma n\tau],$ $n\in  \mathbb{N}$.
For $t\in J_1$ we get
\begin{equation*}
\begin{split}
D_1&=M_1-m_1 \cr
&= \max_{i=1,\dots,N} \max_{t \in I_1^i} \  \omega_i(t) - \min_{i=1,\dots,N} \min_{t \in I_1^i} \ \omega_i(t) \\
& \leq M_0 - \frac{\Gamma_0}{2}(M_0-m_0)-m_0-\frac{\Gamma_0}{2}(M_0-m_0)\cr
&=(M_0-m_0)(1-\Gamma_0)\cr
&=D_0 (1-\Gamma_0).
\end{split}
\end{equation*}
Thus, we find that $D_1 \leq (1-\Gamma_0)D_0.$ Iterating the argument gives
$$ D_{n+1}\leq (1-\Gamma_n)D_n, \ \forall n \in \mathbb{N}.$$
Let us denote
$$ \tilde{\sigma}(D):=\min \Big\{ \tau_0, \frac{D}{4\kappa R_\omega} \Big\},$$
so that $\sigma_n = \tilde{\sigma}(D_n)$ for $n \in \mathbb{N}.$
Similarly, let us denote
$$ \tilde{\Gamma}(D):= \Big(\frac{\xi_*}{N-1}\Big)^{\gamma}e^{-\kappa2\gamma\tau}(1-e^{-\frac{\kappa \tau}{N-1}})^{\gamma-1}\left (1-e^{-\frac{\kappa \tilde{\sigma}(D)}{N-1}}\right )$$
so that $\Gamma_n = \tilde{\Gamma}(D_n)$ for $n \in \mathbb{N}.$ Then we find 
$$ D_{n+1}\leq (1-\tilde{\Gamma}(D_n))D_n,$$
and subsequently, $\{ D_n \}_{n \in \mathbb{N}}$ is a non-negative and decreasing sequence.   Passing to the limit $n\to\infty$ in the above estimate, and denoting $D$ the limit of $\{ D_n \}_{n \in \mathbb{N}},$ we have
$$ D \leq (1-\tilde{\Gamma}(D))D,$$
that is true only if $\tilde{\Gamma}(D) \leq 0.$ This gives $D=0$ and, noticing that, from Lemma \ref{5.1} and Remark \ref{remark5.1},  for further times to respect $J_n,$  we have
$\omega_i(t)-\omega_j(t) \leq M_n-m_n = D_n$ for all $i,j=1,\dots,N$, we conclude that
$$ |\omega_i(t)-\omega_j(t)| \rightarrow 0, $$
as $t \rightarrow + \infty$ and for all $i,j=1,\dots,N.$ 
Then, the system achieves asymptotic synchronization. \end{proof}

%
%
%
%
\section{Asymptotic synchronization: all-to-all connected case}\label{sec_syn_all2}
In this section, we consider the Kuramoto oscillators with delayed coupling in the case of all-to-all connection, i.e. $\chi_{ij}=1,$ for all $i,j=1, \dots, N$.  In this case, the system \eqref{main_eq2} reduces to 
\begin{equation}\label{main_eq}
\frac{d}{dt}\theta_i(t) = \Omega_i + \frac\kappa {N-1} \sum_{k\ne i} \sin(\theta_k(t - \tau_{ik}) - \theta_i(t)), \quad i = 1,\dots, N, \quad  t > 0,
\end{equation}
subject to the initial data \eqref{IC2}. 

Differently from the strongly connected case, in the all-to-all connected case, we prove the exponential frequency synchronization of system \eqref{main_eq}, i.e. the frequency-diameter decays to zero exponentially fast as time goes to infinity. 

To be more specific, our main result of this section is the following.
\begin{theorem}\label{theorem_main1} 
Let $\{\theta_i(t)\}_{i=1}^N$ be a global-in-time solution to the Kuramoto model \eqref{main_eq} with initial data \eqref{IC2}. Assume that the hypotheses of Lemma \ref{Main_Strong} are satisfied.
Then, the time-delayed Kuramoto oscillators achieve the asymptotic complete frequency synchronization in the sense of Definition \ref{def_sync} exponentially fast. More precisely, we have
\[
d_\omega(t) \leq {\mathcal C} e^{-\tilde \gamma t} \quad \mbox{for all} \quad t \geq 0,
\]
where $\gamma$ and ${\mathcal C}$ are positive constants depending on $R_\omega$, $\kappa$,  and $\tau$.
\end{theorem}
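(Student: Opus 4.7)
The plan is to derive a Halanay-type differential inequality for $d_\omega(t)$ and then invoke the classical Halanay lemma. The hypotheses of Lemma \ref{Main_Strong} give $d_\theta(t)<d_\infty<\pi/2$ for all $t\geq t_*$, so by Remark \ref{rem1} every coupling cosine in the second-order system \eqref{second_order} is bounded below by $\xi_*>0$. Because $\chi_{ij}\equiv 1$, each oscillator is directly coupled to every other one (the graph depth is $\gamma=1$), so the multi-step chain argument from Section \ref{sec_syn_str} collapses into a single-shot estimate and yields an explicit decay rate.

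\textbf{Step 1 (differential inequality for the velocity diameter).} Set $\omega^M(t):=\max_i\omega_i(t)$, $\omega^m(t):=\min_i\omega_i(t)$, $d_\omega(t)=\omega^M(t)-\omega^m(t)$, and the enlarged diameter
\[
\bar D(t):=\sup_{s\in[t-\tau,t]}\omega^M(s)-\inf_{s\in[t-\tau,t]}\omega^m(s).
\]
By standard Lipschitz/Dini-derivative arguments, at almost every $t\geq t_*$ there exist indices $i^*(t),j^*(t)$ with $\dot\omega^M(t)=\dot\omega_{i^*}(t)$ and $\dot\omega^m(t)=\dot\omega_{j^*}(t)$. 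Splitting $\omega_k(t-\tau_{i^*k})=\omega_k(t)-\int_{t-\tau_{i^*k}}^{t}\dot\omega_k(r)\,dr$ and using the straightforward bound $|\dot\omega_k(r)|\leq \kappa\bar D(t)$ coming from \eqref{second_order} decomposes the right-hand side of $\dot\omega_{i^*}$ into a principal part $\frac{\kappa}{N-1}\sum_{k\neq i^*}\cos(\cdot_{i^*k})(\omega_k(t)-\omega^M(t))$ plus a delay-correction error of size at most $\kappa^2\tau\,\bar D(t)$. Since $\omega_k(t)-\omega^M(t)\leq 0$ and $\cos(\cdot)\geq\xi_*$, the principal part is sharpened to $\frac{\kappa\xi_*}{N-1}\sum_{k\neq i^*}(\omega_k(t)-\omega^M(t))=-\frac{\kappa\xi_* N}{N-1}(\omega^M(t)-\bar\omega(t))$, with $\bar\omega=\frac1N\sum_k\omega_k$. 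A symmetric lower bound for $\dot\omega_{j^*}$ and subtraction yield
\[
\dot d_\omega(t)\leq -\frac{\kappa\xi_* N}{N-1}\,d_\omega(t)+C_1\kappa^2\tau\,\bar D(t)\qquad\text{for a.e.\ }t\geq t_*.
\]

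\textbf{Step 2 (Halanay closure).} A short bootstrap, again based on $|\dot\omega_k|\leq\kappa\bar D$ applied over the delay window, shows that for $\kappa\tau$ small enough one has $\bar D(t)\leq C_2\sup_{s\in[t-\tau,t]}d_\omega(s)$, so the inequality above reduces to the classical Halanay form
\[
\dot d_\omega(t)\leq -\alpha\,d_\omega(t)+\beta\sup_{s\in[t-\tau,t]}d_\omega(s),\qquad \alpha=\frac{\kappa\xi_* N}{N-1},\ \beta=\mathcal{O}(\kappa^2\tau).
\]
The smallness of $\tau$ already built into the hypotheses of Lemma \ref{Main_Strong} guarantees $\alpha>\beta$, and the Halanay inequality then produces the exponential bound $d_\omega(t)\leq \mathcal{C}\,e^{-\tilde\gamma(t-t_*)}$, where $\tilde\gamma>0$ is the unique positive root of $\tilde\gamma=\alpha-\beta\,e^{\tilde\gamma\tau}$. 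The transient $t\in[0,t_*]$ is absorbed into $\mathcal{C}$ via the uniform bound $|\omega_i|\leq R_\omega$ from \eqref{bound_velocities}.

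The main obstacle will be the careful bookkeeping required to ensure $\alpha>\beta$ under the standing smallness hypothesis on $\tau$, together with the bootstrap that converts the enlarged diameter $\bar D$ into $\sup d_\omega$; a subsidiary technical point is the Dini-derivative handling of $\omega^M,\omega^m$, which are only Lipschitz rather than smooth, so the selection of $i^*(t),j^*(t)$ and the a.e.\ differentiation must be justified via a Rademacher-type argument.
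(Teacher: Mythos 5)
Your route is genuinely different from the paper's: you aim at a continuous-time Halanay inequality for $d_\omega(t)$, whereas the paper never differentiates the diameter. Instead it works with the windowed quantities $D_\omega^*(n)$ over intervals $[t_*+(n-1)\tau,\,t_*+n\tau]$ and proves, via Gr\"onwall estimates on \eqref{second_order} (Lemmas \ref{3.3}, \ref{4.4}, \ref{3.4}), the three-step contraction $D_\omega^*(n+1)\le \tilde C\,D_\omega^*(n-2)$ with $\tilde C=1-e^{-\kappa\tau}(1-C)$ and $C=\max\{1-e^{-2\kappa\tau},\,1-\tfrac{\xi_*}{N-1}(1-e^{-\kappa\tau})\}$; geometric decay of $D_\omega^*(3n)$ then gives the exponential rate. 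The crucial structural advantage of that argument is that $\tilde C<1$ holds automatically as soon as $\xi_*>0$, i.e. no relation between the size of $\kappa\tau$ and $\xi_*$ is ever needed beyond the hypotheses of Lemma \ref{Main_Strong}.

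This is exactly where your proposal has a genuine gap. Your Halanay closure requires $\alpha>\beta$, i.e. roughly $C_1C_2\,\kappa\tau<\tfrac{N}{N-1}\xi_*$, and you defer this to ``bookkeeping.'' It is not bookkeeping: the hypotheses of Lemma \ref{Main_Strong} bound $\tan(R_\omega\tau)$ against $d_\infty/c$ and impose $d_\infty+R_\omega\tau<\pi/2$, but they give no lower bound on $\xi_*=\cos(R_\omega\tau+d_\infty)$ in terms of $\kappa\tau$; a parameter set with $d_\infty+R_\omega\tau$ very close to $\pi/2$ makes $\xi_*$ arbitrarily small while $\kappa\tau$ sits at its permitted size, and then $\alpha>\beta$ fails. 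So as written your argument proves the theorem only under an additional delay-smallness condition of the type the paper explicitly set out to avoid (compare the discussion of \cite{CP21, ZhuNHM} in the introduction). There are also two secondary technical issues you would have to repair: (i) the bound $|\dot\omega_k(r)|\le\kappa\bar D(t)$ for $r\in[t-\tau,t]$ is not correct with your $\bar D$, since $\dot\omega_k(r)$ involves velocities at times down to $t-2\tau$, so the error term lives on a $2\tau$-window; and (ii) the bootstrap $\bar D(t)\le C_2\sup_{s\in[t-\tau,t]}d_\omega(s)$ does not close on a single window either — controlling the cross-time diameter by instantaneous diameters again drags in longer windows and another smallness requirement, whereas the paper's Lemma \ref{3.1} (monotonicity of windowed extrema) is what substitutes for this step in their scheme. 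If you want to keep the Halanay flavor you must either strengthen the hypotheses or replace Step 2 by the paper's windowed comparison; otherwise the cleanest fix is to adopt the discrete iteration of Lemmas \ref{4.4} and \ref{3.4}.
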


To prove Theorem \ref{theorem_main1}  we need some preliminary lemmas (cf. \cite{Rodriguez, CP23}).
In the whole section, we assume that the hypotheses of Lemma \ref{Main_Strong} are satisfied.

First, extending \eqref{Dzero}, we define, for $n\in\N,$ 
\begin{equation*}
\begin{array}{l}
\displaystyle{
D_\theta^*(n) :=\max_{\substack{1 \leq i,j \leq N, \\ s,t\in [t_*+(n-1)\tau, t_*+n\tau]}}|\theta_i(s) - \theta_j(t)|,}\\
\displaystyle{
D_\omega^*(n) :=\max_{\substack{1 \leq i,j \leq N, \\ s,t\in [t_*+(n-1)\tau, t_*+n\tau]}}|\omega_i(s) - \omega_j(t)|,}
\end{array}
\end{equation*}
where $t_*$ is the time in \eqref{diam_phase_small}.


%
%
%
%
%
%
%
%
%

The following lemma generalizes Lemma \ref{5.1}. We omit the proof since it is analogous to the previous one.
\begin{lemma}\label{3.1}
Let $\{\theta_i(t)\}_{i=1}^N$ be a solution to the Kuramoto model \eqref{main_eq} with initial conditions \eqref{IC2}. 
 Then,  for any $T \geq t_*$, we have
\begin{equation*}
\min_{j=1,...,N} \min_{s \in [T-\tau, T]}  \omega_j(s) \leq  \omega_i(t) \leq \max_{j=1,...,N} \max_{s \in [T-\tau, T]} \omega_j(s) 
\end{equation*}
for all $t \geq T - {\tau}$ and $i=1,...,N$.
\end{lemma}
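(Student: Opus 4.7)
The plan is to mimic verbatim the barrier-plus-Gr\"onwall argument from the proof of Lemma \ref{5.1}, but with the fixed reference time $t_*$ replaced by the free time $T\ge t_*$ and with $\tau_i$ replaced by $\tau$. Concretely, I set
\[
\bar M := \max_{j=1,\dots,N}\max_{s\in[T-\tau,T]}\omega_j(s),\qquad \bar m := \min_{j=1,\dots,N}\min_{s\in[T-\tau,T]}\omega_j(s),
\]
and aim to show $\omega_i(t)\le \bar M$ and $\omega_i(t)\ge \bar m$ for every $t\ge T-\tau$ and every $i$. On $[T-\tau,T]$ both bounds hold by definition, so the work is on $[T,\infty)$.

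For the upper bound I would fix $\varepsilon>0$ and introduce the barrier set
\[
\mathcal T^{\varepsilon} := \bigl\{\,t>T\ :\ \max_{i=1,\dots,N}\omega_i(s)<\bar M+\varepsilon\ \text{for all } s\in[T,t)\,\bigr\},
\]
which is non-empty by continuity. Assuming by contradiction that $\mathcal S^{\varepsilon}:=\sup\mathcal T^{\varepsilon}<+\infty$, I differentiate the equation to get, as in \eqref{second_order},
\[
\frac{d}{dt}\omega_i(t) = \frac{\kappa}{N-1}\sum_{k\neq i}\cos\bigl(\theta_k(t-\tau_{ik})-\theta_i(t)\bigr)\bigl(\omega_k(t-\tau_{ik})-\omega_i(t)\bigr).
\]
Here I use two ingredients already available: (i) because $T\ge t_*$ and the phase diameter is controlled after $t_*$ by \eqref{diam_phase_small}, Remark \ref{rem1} gives $\cos(\theta_k(t-\tau_{ik})-\theta_i(t))\ge\xi_*>0$, so every coefficient in the sum is non-negative; (ii) for $t\in(T,\mathcal S^{\varepsilon})$ the delayed argument $t-\tau_{ik}$ lies in $[T-\tau,\mathcal S^{\varepsilon})$, on which $\omega_k<\bar M+\varepsilon$. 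Bounding $\omega_k(t-\tau_{ik})-\omega_i(t)\le \bar M+\varepsilon-\omega_i(t)\ge 0$ and $\sum_{k\neq i}\cos(\cdot)\le N-1$, this yields
\[
\frac{d}{dt}\omega_i(t)\le \kappa\bigl(\bar M+\varepsilon-\omega_i(t)\bigr).
\]
Gr\"onwall's inequality on $[T,t]$ with $\omega_i(T)\le\bar M$ then gives
\[
\omega_i(t)\le \bar M+\varepsilon-\varepsilon\,e^{-\kappa(t-T)}\le \bar M+\varepsilon-\varepsilon\,e^{-\kappa(\mathcal S^{\varepsilon}-T)},
\]
and taking $t\to(\mathcal S^{\varepsilon})^-$ contradicts the definition of $\mathcal S^{\varepsilon}$. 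Hence $\mathcal S^{\varepsilon}=+\infty$, and since $\varepsilon>0$ is arbitrary we conclude $\omega_i(t)\le\bar M$ for all $t\ge T$.

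The lower bound follows by applying exactly the same argument to $-\omega_i$, using that $-\omega_k(t-\tau_{ik})\le-\bar m+\varepsilon$ on the relevant interval. I do not foresee any genuine obstacle: the only point that differs from the proof of Lemma \ref{5.1} is that the reference interval $[T-\tau,T]$ may be strictly inside $[t_*,\infty)$ rather than across $t_*$, but because $T\ge t_*$ the phase diameter is already bounded by $d_\infty<\pi/2$ throughout, so \eqref{lower bound} applies on the entire interval $[T-\tau,\infty)$ and the cosine factors are uniformly positive. The use of the all-to-all assumption $\chi_{ij}\equiv 1$ enters only to ensure $\sum_{k\neq i}\cos(\cdot)\le N-1$ cleanly; no additional structural input is needed.
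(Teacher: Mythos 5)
Your proposal is correct and follows exactly the route the paper intends: the paper omits the proof of Lemma \ref{3.1} precisely because it is the barrier-plus-Gr\"onwall argument of Lemma \ref{5.1} with $t_*$ replaced by $T\ge t_*$ and $\tau_i$ by $\tau$, which is what you carry out, using Remark \ref{rem1} to keep the cosine weights nonnegative. The only blemish is the garbled chain ``$\omega_k(t-\tau_{ik})-\omega_i(t)\le \bar M+\varepsilon-\omega_i(t)\ge 0$'', which should be stated as two separate inequalities, but the argument itself is sound.
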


From Lemma \ref{3.1}, we deduce the properties below.
\begin{lemma}\label{3.2}
For every $n \in \mathbb{N}$ and  $i,j=1,...,N,$ we get 
\begin{equation*}
\vert \omega_i(s)-\omega_j(t) \vert \leq D_{\omega}^*(n), \quad \forall s,t \geq t_*+(n-1)\tau.
\end{equation*}
\end{lemma}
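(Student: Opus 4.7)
The plan is to derive this directly from Lemma \ref{3.1} by choosing the right value of the auxiliary time $T$ so that the window $[T-\tau, T]$ coincides with the window $[t_*+(n-1)\tau, t_*+n\tau]$ that appears in the definition of $D_\omega^*(n)$.

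Concretely, I would fix $n \in \mathbb{N}$ and apply Lemma \ref{3.1} with $T = t_* + n\tau \geq t_*$. The lemma then gives, for every $i=1,\dots,N$ and every $t \geq t_* + (n-1)\tau$,
\[
\min_{j=1,\dots,N}\ \min_{s \in [t_*+(n-1)\tau, t_*+n\tau]} \omega_j(s) \ \leq\ \omega_i(t) \ \leq\ \max_{j=1,\dots,N}\ \max_{s \in [t_*+(n-1)\tau, t_*+n\tau]} \omega_j(s).
\]
Introducing the shorthand
\[
m^* := \min_{j,\, s \in [t_*+(n-1)\tau, t_*+n\tau]} \omega_j(s), \qquad M^* := \max_{j,\, s \in [t_*+(n-1)\tau, t_*+n\tau]} \omega_j(s),
\]
the definition of $D_\omega^*(n)$ attained by a pair of extremal points in the window gives $M^* - m^* = D_\omega^*(n)$.

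Now pick arbitrary indices $i,j \in \{1,\dots,N\}$ and arbitrary times $s,t \geq t_* + (n-1)\tau$. The bracketing above, applied twice, yields $m^* \leq \omega_i(s) \leq M^*$ and $m^* \leq \omega_j(t) \leq M^*$, hence
\[
|\omega_i(s) - \omega_j(t)| \ \leq\ M^* - m^* \ =\ D_\omega^*(n),
\]
which is the desired conclusion. No obstacle is expected here: the whole content is already absorbed in Lemma \ref{3.1}, and the lemma at hand is essentially a repackaging that emphasizes the diameter viewpoint needed for subsequent exponential decay arguments.
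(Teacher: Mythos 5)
Your proposal is correct and follows essentially the same route as the paper: apply Lemma \ref{3.1} with $T = t_* + n\tau$ to bracket each $\omega_i$ between the minimum $m^*$ and maximum $M^*$ over the window $[t_*+(n-1)\tau, t_*+n\tau]$, then bound the difference by $M^*-m^* \leq D_\omega^*(n)$. The only cosmetic difference is that the paper reduces to the case $\omega_i(s)>\omega_j(t)$ and uses the inequality $M^*-m^*\leq D_\omega^*(n)$ rather than your (also valid) equality.
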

\begin{proof}
Fix $n \in \mathbb{N}$ and $i,j=1,...,N.$ Without loss of generality, taken $s,t \geq t_*+(n-1)\tau$, we can suppose $\omega_i(s)>\omega_j(t).$
Then, using the Lemma \ref{3.1}   with $T=t_*+n\tau$ and Cauchy--Schwartz inequality, we have
\begin{align*}
\vert \omega_i(s)-\omega_j(t)\vert &=\omega_i(s)-\omega_j(t)\cr
&\leq \max_{l=1,...,N} \max_{r \in [t_*+(n-1)\tau, t_*+n\tau]} \omega_l(r)- \min_{l=1,...,N} \min_{r \in [t_*+(n-1)\tau, t_*+n\tau]}   \omega_l(r)\cr
&\leq \max_{l,k=1,...,N} \max_{r, \sigma \in [t_*+(n-1)\tau, t_*+n\tau]} \vert \omega_l(r)-\omega_k(\sigma) \vert= D_{\omega}^*(n).
\end{align*}
This completes the proof.
\end{proof}

\begin{remark}
Note that, from Lemma \ref{3.2}, for $n=0$, we find $\vert \omega_i(s)-\omega_j(t) \vert \leq D_{\omega}^*(0).$ Moreover, 
$$ D_{\omega}^*(n+1) \leq D_{\omega}^*(n), \ \forall\, n \in \mathbb{N}.$$
\end{remark}

\begin{lemma}\label{3.3}
For all $i,j=1,...,N $ and $n \in \mathbb{N},$ we have that 
\begin{equation} \label{l1}
\omega_i(t)- \omega_j(t)\leq e^{-\kappa(t-t_0)} ( \omega_i(t_0)-\omega_j(t_0)) + (1-e^{-\kappa(t-t_0)}) D_{\omega}^*(n),
\end{equation}
for all $t \geq t_0 \geq t_*+n \tau.$ Moreover, for all $s_0,t_0 \in [t_*+n\tau, t_*+(n+2)\tau]$, we obtain
\begin{equation} \label{l2}
\omega_i(t_*+(n+2)\tau)- \omega_j(t_*+(n+2)\tau) \leq e^{-2\kappa\tau}(\omega_i(t_0)-\omega_j(s_0))+ (1-e^{-2\kappa\tau}) D_{\omega}^*(n).
\end{equation}
\end{lemma}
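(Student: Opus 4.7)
The plan is to derive both (l1) and (l2) from a scalar Gronwall argument applied to the velocity equation \eqref{second_order}, combined with the uniform a priori bounds of Lemma \ref{3.1}. In the all-to-all case $\chi_{ij}\equiv 1$, I would first rewrite \eqref{second_order} as
\[
\dot{\omega}_i(t) = A_i(t)\bigl(y_i(t) - \omega_i(t)\bigr), \qquad A_i(t) := \frac{\kappa}{N-1}\sum_{k\neq i}\cos(\theta_k(t-\tau_{ik}) - \theta_i(t)),
\]
where $y_i(t)$ is the convex combination of $\{\omega_k(t-\tau_{ik})\}_{k\neq i}$ with weights proportional to the corresponding cosines. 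By Remark \ref{rem1}, each cosine is positive, so $A_i(t) \in (0,\kappa]$ and $y_i(t)$ lies in the convex hull of the delayed velocities.

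For (l1), I would apply Lemma \ref{3.1} with $T = t_*+n\tau$: setting $\bar M = \max_{k,\,s\in[t_*+(n-1)\tau,\,t_*+n\tau]}\omega_k(s)$ and $\bar m$ defined analogously as the minimum, both $\omega_i(t)$ itself and every delayed value $\omega_k(t-\tau_{ik})$ entering $y_i(t)$ belong to $[\bar m,\bar M]$ for all $t\ge t_*+n\tau$, and $\bar M - \bar m = D_{\omega}^*(n)$. Consequently,
\[
\dot{\omega}_i(t) \leq A_i(t)\bigl(\bar M - \omega_i(t)\bigr) \leq \kappa\bigl(\bar M - \omega_i(t)\bigr),
\]
the replacement $A_i(t)\mapsto \kappa$ being legitimate precisely because $\bar M-\omega_i(t)\geq 0$. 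Gronwall's inequality then gives
\[
\omega_i(t) \leq e^{-\kappa(t-t_0)}\omega_i(t_0) + \bar M\bigl(1-e^{-\kappa(t-t_0)}\bigr), \qquad t\ge t_0\ge t_*+n\tau,
\]
and the symmetric lower-bound argument for $\omega_j$ (using $\omega_j(t)-\bar m\ge 0$) yields $\omega_j(t) \geq e^{-\kappa(t-t_0)}\omega_j(t_0) + \bar m(1-e^{-\kappa(t-t_0)})$. Subtracting the two inequalities proves (l1).

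For (l2), I would take the above upper bound in the equivalent form
\[
\omega_i(t_*+(n+2)\tau) \leq \bar M - e^{-\kappa(t_*+(n+2)\tau - t_0)}\bigl(\bar M - \omega_i(t_0)\bigr),
\]
valid for any $t_0\in[t_*+n\tau,\,t_*+(n+2)\tau]$. Since $\bar M-\omega_i(t_0)\ge 0$ and $t_*+(n+2)\tau - t_0 \le 2\tau$, monotonicity of the exponential yields
\[
\omega_i(t_*+(n+2)\tau) \leq \bar M - e^{-2\kappa\tau}\bigl(\bar M - \omega_i(t_0)\bigr) = e^{-2\kappa\tau}\omega_i(t_0) + \bar M\bigl(1-e^{-2\kappa\tau}\bigr).
\]
The mirror estimate applied to $\omega_j$ at an independent time $s_0 \in [t_*+n\tau,\,t_*+(n+2)\tau]$ gives $\omega_j(t_*+(n+2)\tau) \geq e^{-2\kappa\tau}\omega_j(s_0) + \bar m(1-e^{-2\kappa\tau})$, and subtracting the two produces (l2).

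The only delicate point in the plan is verifying that $\bar M - \omega_i(\cdot) \geq 0$ and $\omega_j(\cdot) - \bar m \geq 0$ on the relevant time intervals, so that the cosine-weighted coefficient $A_i(t)$ can be replaced by the uniform upper bound $\kappa$ without reversing the inequality; this is precisely what Lemma \ref{3.1} guarantees once $T$ is chosen as $t_*+n\tau$.
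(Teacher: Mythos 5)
Your proposal is correct and follows essentially the same route as the paper: use Lemma \ref{3.1} to confine $\omega_i(t)$ and the delayed velocities between the window extrema, exploit the resulting sign of $\bar M-\omega_i$ to replace the cosine-weighted coefficient by $\kappa$, apply Gr\"onwall and subtract, and obtain \eqref{l2} by evaluating the same bound at $t_*+(n+2)\tau$ with elapsed time at most $2\tau$. If anything, your choice of the window $[t_*+(n-1)\tau,\,t_*+n\tau]$, for which $\bar M-\bar m$ equals $D_\omega^*(n)$ directly, is slightly tidier than the paper's use of $M_{n-1}^*$, $m_{n-1}^*$ in the same estimate.
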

\begin{proof}
Fix $n \in \mathbb{N}$ and let us denote 
\[
M_n^* := \max_{i=1,...,N} \max_{t \in [t_*+(n-1)\tau, t_*+n \tau]} \omega_i(t) \quad \mbox{and} \quad m_n^* := \min_{i=1,...,N} \min_{t \in [t_*+(n-1)\tau, t_*+n\tau]} \omega_i(t).
\]
Note that $ M_n^*-m_n^* \leq D_{\omega}^* (n)$. 
For all $i=1,...,N$ and $t \geq t_0 \geq t_*+n \tau $, we write
\begin{align*}
\begin{aligned}
\frac{d}{d t} \omega_i(t) & = \frac{\kappa}{N-1} \sum_{k \neq i} \cos (\theta_k(t-\tau_{ik})-\theta_i(t))( \omega_k(t-\tau_{ik}) -\omega_i(t)) \cr
& \leq \frac{\kappa}{N-1} \sum_{k \neq i} \cos (\theta_k(t-\tau_{ik})-\theta_i(t))(M_{n-1}^*-\omega_i(t)),
\end{aligned}
\end{align*}
where we used the fact that $t-\tau_{ik} \ge t_*+(n-1)\tau.$
By Lemma 3.1, since $t \geq t_*+n \tau$, we get $\omega_i(t) \leq M_{n-1}^*$, that implies $M_{n-1}^*- \omega_i(t)\geq 0$. Then we write 
$$ \frac{d}{d t} \omega_i(t) \leq \kappa (M_{n-1}^*- \omega_i(t)). $$
Applying Gr\"onwall's lemma, we find 
\begin{equation} \label{15}
\omega_i(t)\leq e^{-\kappa (t-t_0)} \omega_i(t_0) + (1-e^{-\kappa (t-t_0)})M_{n-1}^*.
\end{equation}
Similarly, for all $i=1,...,N$ and $t \geq t_0 \geq t_*+n \tau$, we deduce
\begin{equation} \label{16}
\omega_i(t) \geq e^{-\kappa (t-t_0)} \omega_i(t_0) + 
(1-e^{-\kappa(t-t_0)})m_{n-1}^*.
\end{equation}
Therefore, for $i,j=1,...,N$ and $t \geq t_0 \geq t_*+n \tau,$  we have 
\begin{align*}
\omega_i(t)-\omega_j(t) &\leq e^{-\kappa (t-t_0)} \omega_i(t_0) + (1-e^{-\kappa (t-t_0)}) M_{n-1}^* - e^{-\kappa (t-t_0)}  \omega_j(t_0) - (1-e^{-\kappa (t-t_0)})m_{n-1}^*\cr
&= e^{-\kappa (t-t_0)} ( \omega_i(t_0)-\omega_j(t_0)) + (1-e^{-\kappa (t-t_0)})(M_{n-1}^*-m_{n-1}^*)\cr
&\leq e^{-\kappa (t-t_0)}( \omega_i(t_0)-\omega_j(t_0)) + (1-e^{-\kappa (t-t_0)}) D_{\omega}^*(n).
\end{align*}
This gives the first assertion \eqref{l1}. 

For \eqref{l2}, from \eqref{15} with $t_0 \in [t_*+n \tau, t_*+(n+2)\tau]$ and $t:=t_*+(n+2)\tau$ we find
\begin{equation} \label{17}
\begin{split}
\omega_i(t_*+(n+2)\tau) & \leq e^{-\kappa (t_*+(n+2)\tau-t_0)} \omega_i(t_0) + (1-e^{-\kappa (t_*+(n+2)\tau-t_0)})M_{n-1}^* \\
& = e^{-\kappa (t_*+(n+2)\tau-t_0)}(\omega_i(t_0)-M_{n-1}^*)+M_{n-1}^* \\
& \leq e^{-2\kappa \tau}(\omega_i(t_0)-M_{n-1}^*)+M_{n-1}^* \\
& = e^{-2\kappa\tau}\omega_i(t_0) + (1-e^{-2\kappa \tau})M_{n-1}^*.
\end{split}
\end{equation}
Similarly, from \eqref{16}, for any $j=1,\dots,N$ and $s_0 \in [t_*+n\tau, t_*+(n+2)\tau]$ we have 
\begin{equation} \label{18}
\omega_i(t_*+(n+2)\tau)\geq e^{-2\kappa\tau}\omega_i(s_0)+ (1-e^{-2\kappa \tau})m_{n-1}^*.
\end{equation}
Combining \eqref{17} and \eqref{18} we arrive at \eqref{l2}, and this completes the proof.
\end{proof}

\begin{lemma} \label{4.4}
For all $n \in \mathbb{N},$ we have that 
\begin{equation*}
D_{\omega}^*(n+1) \leq e^{-\kappa \tau}d_\omega(t_*+n\tau) + (1-e^{-\kappa \tau})D_{\omega}^*(n).
\end{equation*}
\end{lemma}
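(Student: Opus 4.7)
The strategy is to re-use the one-sided Gr\"onwall estimates that underlie Lemma \ref{3.3}, namely
\begin{equation*}
\omega_i(t)\le e^{-\kappa(t-t_0)}\omega_i(t_0)+(1-e^{-\kappa(t-t_0)})M_n^*,\qquad
\omega_i(t)\ge e^{-\kappa(t-t_0)}\omega_i(t_0)+(1-e^{-\kappa(t-t_0)})m_n^*,
\end{equation*}
valid for all $t\ge t_0\ge t_*+n\tau$ and all $i$, where $M_n^*$ and $m_n^*$ denote the extrema of $\omega_k(\cdot)$ over $[t_*+(n-1)\tau,t_*+n\tau]$ (so that $M_n^*-m_n^*\le D_\omega^*(n)$). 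These are precisely the building blocks \eqref{15}--\eqref{16} arising in the proof of Lemma \ref{3.3}. The reason we need the one-sided form rather than the symmetric inequality \eqref{l1} is that the definition of $D_\omega^*(n+1)$ involves $|\omega_i(s)-\omega_j(t)|$ at \emph{possibly different} times $s,t\in[t_*+n\tau,\,t_*+(n+1)\tau]$.

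After fixing such $i,j,s,t$ and, by symmetry, restricting to the case $\omega_i(s)\ge \omega_j(t)$, I would take $t_0=t_*+n\tau$, apply the upper estimate at time $s$ to $\omega_i$ and the lower one at time $t$ to $\omega_j$, and subtract. A direct rearrangement yields
\begin{equation*}
\omega_i(s)-\omega_j(t)\le (M_n^*-m_n^*)+e^{-\kappa(s-t_*-n\tau)}\bigl(\omega_i(t_*+n\tau)-M_n^*\bigr)+e^{-\kappa(t-t_*-n\tau)}\bigl(m_n^*-\omega_j(t_*+n\tau)\bigr).
\end{equation*}
The two bracketed quantities are non-positive by the very definition of $M_n^*$ and $m_n^*$, and the two exponential coefficients both belong to $[e^{-\kappa\tau},1]$ since $s-t_*-n\tau,\,t-t_*-n\tau\in[0,\tau]$.

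The delicate step, and the one where the contraction factor is produced, is to lower each exponential to its minimum value $e^{-\kappa\tau}$: because the factors they multiply are non-positive, this substitution \emph{increases} the right-hand side and therefore produces a valid upper bound. The resulting $-e^{-\kappa\tau}(M_n^*-m_n^*)$ then partially cancels the leading $(M_n^*-m_n^*)\le D_\omega^*(n)$, so that one is left with
\begin{equation*}
\omega_i(s)-\omega_j(t)\le (1-e^{-\kappa\tau})D_\omega^*(n)+e^{-\kappa\tau}\bigl(\omega_i(t_*+n\tau)-\omega_j(t_*+n\tau)\bigr)\le (1-e^{-\kappa\tau})D_\omega^*(n)+e^{-\kappa\tau}d_\omega(t_*+n\tau).
\end{equation*}
Taking the supremum over $i,j,s,t$ then yields the claim. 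The pitfall to avoid is the reflex of bounding the exponentials above by $1$ instead of below by $e^{-\kappa\tau}$: doing so would annihilate the cancellation with $M_n^*-m_n^*$ and destroy the contraction prefactor $(1-e^{-\kappa\tau})$, leaving an estimate too weak for the subsequent iteration towards exponential synchronization.
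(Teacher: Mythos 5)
Your argument is correct and is essentially the paper's own proof: fix the extremal pair $i,j$ and times $s,t$, apply the one-sided Gr\"onwall bounds \eqref{15}--\eqref{16} at $t_0=t_*+n\tau$, and exploit the non-positivity of $\omega_i(t_*+n\tau)-M_n^*$ and $m_n^*-\omega_j(t_*+n\tau)$ to replace the exponentials by $e^{-\kappa\tau}$, which is exactly the step the paper performs before subtracting the two estimates. No gaps to report.
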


\begin{proof}
Fix $ n \in \mathbb{N}$ and let $i,j=1,...,N,$ $s,t \in [t_*+n \tau,t_*+ (n+1)\tau],$ such that $D_{\omega}^*(n+1)= \vert \omega_i(s)-\omega_j(t) \vert$. Notice that if $\vert \omega_i(s)-\omega_j(t) \vert =0$ then it is obvious that  
\[
0 = D_{\omega}^*(n+1) \leq e^{-\kappa \tau}d_\omega(t_*+n \tau) + (1-e^{-\kappa \tau})D_{\omega}^*(n).
\]
Then, we now suppose $\vert \omega_i(s)-\omega_j(t) \vert >0$ and, without loss of generality, we may assume $\omega_i(s)>\omega_j(t).$
From \eqref{15} with $t_0 =t_*+ n \tau,$ we get
\begin{equation*}
\begin{aligned}
\omega_i(s) & \leq e^{-\kappa (s- t_*-n\tau)}\omega_i(t_*+n \tau) + (1-e^{-\kappa(s-t_*-n\tau)})M_{n}^* \cr
& = e^{-\kappa (s-t_*- n\tau)} (\omega_i(t_*+n \tau)-M_{n}^*) +M_{n}^*.
\end{aligned}
\end{equation*}
Thus, since $s \leq t_*+(n+1)\tau,$ we obtain 
$$\omega_i(s)\leq e^{-\kappa \tau} (\omega_i(t_*+n \tau) -M_{n}^*) +M_{n}^*= e^{-\kappa \tau} \omega_i(t_*+n \tau)+ (1-e^{-\kappa \tau})M_{n}^*. $$
Analogously, we have that 
$$ \omega_j(t)\geq e^{-\kappa \tau}\omega_j(t_*+n \tau) +(1-e^{-\kappa \tau})m_{n}^*. $$
Therefore, we conclude that 
$$
\begin{aligned}
D_{\omega}^*(n+1) & \leq e^{-\kappa \tau}(\omega_i(t_*+n \tau)- \omega_j(t_*+n \tau)) + (1-e^{-\kappa \tau}) D_{\omega}^*(n) \cr
& \leq e^{-\kappa \tau}d_\omega(t_*+n \tau) + (1-e^{-\kappa \tau}) D_{\omega}^*(n).
\end{aligned}
$$
\end{proof}

Before proving our main result we need a further lemma. 
\begin{lemma}\label{3.4}
There exists a constant $ C \in (0,1) $ such that 
$$ d_{\omega}(t_*+n \tau) \leq C D_{\omega}^*(n-2), $$
for all $n \geq 2$. 
\end{lemma}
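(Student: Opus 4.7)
The plan is to exploit the freedom in equation \eqref{l2} of Lemma \ref{3.3}: after the index shift $n \to n-2$, it reads, for every $s_0, t_0 \in I := [t_*+(n-2)\tau, t_*+n\tau]$,
$$\omega_i(t_*+n\tau) - \omega_j(t_*+n\tau) \leq e^{-2\kappa\tau}\bigl(\omega_i(t_0) - \omega_j(s_0)\bigr) + (1-e^{-2\kappa\tau}) D_\omega^*(n-2).$$
The crucial point is that $s_0$ and $t_0$ are chosen independently, so the first summand can be made much smaller than the one-time diameter $d_\omega(t_0)$. Fix indices $i,j$ realizing $d_\omega(t_*+n\tau) = \omega_i(t_*+n\tau) - \omega_j(t_*+n\tau)$, and pick $t_0 \in I$ minimizing $\omega_i$ on $I$, $s_0 \in I$ maximizing $\omega_j$ on $I$. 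Write $m_i := \omega_i(t_0)$ and $M_j := \omega_j(s_0)$.

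If $m_i \leq M_j$ (Case A), the first summand is non-positive and $d_\omega(t_*+n\tau) \leq (1-e^{-2\kappa\tau}) D_\omega^*(n-2)$, so $C := 1-e^{-2\kappa\tau} \in (0,1)$ already works. Otherwise (Case B), $\omega_i$ stays strictly above $\omega_j$ throughout $I \times I$, and the task is to produce a quantitative gap $m_i - M_j \leq \alpha D_\omega^*(n-2)$ with some $\alpha \in (0,1)$ by invoking the dynamics. I would apply the second-order equation \eqref{second_order} at an interior minimum $t_0$ of $\omega_i$, where $\dot\omega_i(t_0) = 0$, and use Remark \ref{rem1} (the cosines are bounded below by $\xi_* > 0$) together with Lemma \ref{3.1} (each $\omega_k(t-\tau_{ik}) \in [m_{n-2}^*, M_{n-2}^*]$) to isolate the $k=j$ term: it contributes at most $-\xi_*(m_i - M_j)$, while each of the remaining $N-2$ terms contributes at most $M_{n-2}^* - m_i$, yielding $\xi_*(m_i - M_j) \leq (N-2)(M_{n-2}^* - m_i)$. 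A symmetric computation at the interior maximum $s_0$ of $\omega_j$ gives $\xi_*(m_i - M_j) \leq (N-2)(M_j - m_{n-2}^*)$. Summing the two and using $D_\omega^*(n-2) = M_{n-2}^* - m_{n-2}^*$ produces
$$m_i - M_j \leq \frac{N-2}{N-2 + 2\xi_*}\, D_\omega^*(n-2) =: \alpha D_\omega^*(n-2), \qquad \alpha \in (0,1).$$
Plugging this back into the opening inequality yields $d_\omega(t_*+n\tau) \leq [e^{-2\kappa\tau}\alpha + (1-e^{-2\kappa\tau})]\, D_\omega^*(n-2)$, and $C := \max\{1-e^{-2\kappa\tau},\ e^{-2\kappa\tau}\alpha + 1-e^{-2\kappa\tau}\} \in (0,1)$ is the desired constant.

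The main obstacle is the bookkeeping in Case B when $t_0$ or $s_0$ falls at an endpoint of $I$, and/or when a delayed time $t_0 - \tau_{ij}$ slips outside $I$ so that the strict inequality $\omega_i > \omega_j$ is not a priori available there. The standard remedies are to enlarge $I$ slightly to the left (using the hypothesis $t_* > \tau$ and Lemma \ref{3.1} to keep the range of every $\omega_k$ still controlled by $[m_{n-2}^*, M_{n-2}^*]$), so that the extrema lie in the interior of the enlarged interval and $\dot\omega_i(t_0) = 0$ is legitimate; and, for the delay issue, to replace the sharp bound $\omega_j(t_0 - \tau_{ij}) \leq M_j$ by the coarser $\omega_j(t_0 - \tau_{ij}) \leq M_{n-2}^*$ whenever the delayed time slips out of $I$, which only weakens the explicit constant $\alpha$ without pushing it to $1$. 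Apart from this careful accounting, the argument is purely algebraic, and the resulting $C$ depends only on $\kappa$, $\tau$, $\xi_*$ and $N$.
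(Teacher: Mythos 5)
Your Case A is exactly the paper's first case (both rest on \eqref{l2} with the shift $n\to n-2$), and your Case B aims at the same quantitative contraction as the paper's second case, but the pointwise stationarity argument you use there has two genuine holes. First, the extremum need not be interior: if the minimum of $\omega_i$ (or the maximum of $\omega_j$) over $I=[t_*+(n-2)\tau,\,t_*+n\tau]$ is attained at the \emph{right} endpoint $t_*+n\tau$, the available one-sided derivative has the wrong sign (at a right-endpoint minimum you only know $\dot\omega_i(t_0)\le 0$, whereas your argument needs $\dot\omega_i(t_0)\ge 0$ to conclude $\xi_*(m_i-M_j)\le (N-2)(M_{n-2}^*-m_i)$, and symmetrically for $\omega_j$), and enlarging $I$ to the left, which is the remedy you propose, does nothing for this case. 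Second, the delayed-time fix is not a mere loss in the constant: when $t_0-\tau_{ij}$ falls to the left of $I$, replacing the bound $\omega_j(t_0-\tau_{ij})\le M_j$ by $\omega_j(t_0-\tau_{ij})\le M_{n-2}^*$ deletes the only negative term in your stationarity identity; the resulting inequality no longer involves $M_j$ and is automatically satisfied, so it yields no control on $m_i-M_j$ whatsoever. Since the minimizer $t_0$ may well lie within distance $\tau_{ij}$ of the left end of $I$, this is not a removable piece of bookkeeping in the form you describe.

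Both problems disappear if you integrate rather than evaluate at an extremum, and this is precisely what the paper does: under your Case B ordering ($\omega_i(t)\ge\omega_j(s)$ for all $t,s\in I$), write the equation \eqref{second_order} for $\frac{d}{dt}(\omega_i-\omega_j)$ on $[t_*+(n-1)\tau,\,t_*+n\tau]$, bound all delayed velocities by $M_{n-1}^*$ and $m_{n-1}^*$ (legitimate since $t-\tau_{ik}\ge t_*+(n-2)\tau$ there, by Lemma \ref{3.1}), retain a single $j$-term, resp.\ $i$-term, with the factor $\xi_*$, and use the ordering to discard the cross term $\omega_j(t-\tau_{ij})-\omega_i(t-\tau_{ji})\le 0$; Gr\"onwall over an interval of length $\tau$ then gives $d_\omega(t_*+n\tau)\le\bigl(1-\frac{\xi_*}{N-1}(1-e^{-\kappa\tau})\bigr)D_\omega^*(n-2)$, and one takes $C=\max\bigl\{1-e^{-2\kappa\tau},\,1-\frac{\xi_*}{N-1}(1-e^{-\kappa\tau})\bigr\}$. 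If you wish to keep your extremum idea, you must replace the pointwise identity by this differential inequality (or an integrated mean-value variant of it), which amounts to essentially the same computation as the paper's.
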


\begin{proof}
If $d_{\omega}(t_*+n \tau)=0,$ then the assetion is obvious for all $C \in (0,1).$ Then, let us suppose $d_{\omega}(t_*+n \tau)>0$. Fix $i,j=1,...,N$ such that $d_{\omega}(t_*+n \tau)= |\omega_i(t_*+n \tau)-\omega_j(t_*+n \tau)|.$ Without lost of generality, we may assume $\omega_i(t_*+n \tau)>\omega_j(t_*+n \tau).$ 
As before, we consider the following quantities:
$$ M_{n-1}^*= \max_{l=1,...,N} \max_{s \in [t_*+(n-2)\tau, t_*+(n-1)\tau]}  \omega_l(s) $$
and 
$$ m_{n-1}^*= \min_{l=1,...,N} \min_{s \in [t_*+(n-2)\tau,t_*+ (n-1)\tau]}  \omega_l(s)\,. $$
Then we get $M_{n-1}^*-m_{n-1}^*\leq D_{\omega}^*(n-1)$.

Let us distinguish two cases. 

\textbf{Case I}: Suppose that there exist $t_0,s_0 \in [t_*+(n-2)\tau, t_*+ n\tau]$ such that $\omega_i(t_0)-\omega_j(s_0)< 0$. Then, due to \eqref{l2} we have
\begin{align*}
\begin{aligned}
d_{\omega}(t_*+n \tau) & \leq e^{-2\kappa\tau}(\omega_i(t_0)-\omega_j(s_0))+ (1-e^{-2\kappa\tau})D_{\omega}^*(n-2) \cr
& \leq (1-e^{-2\kappa\tau})D_{\omega}^*(n-2).
\end{aligned}
\end{align*}
Then, the assertion follows in this case. 

\textbf{Case II}: Suppose that $ \omega_i(t)-\omega_j(s)\geq 0$ for all $t,s \in  [t_*+(n-2)\tau,t_*+ n\tau] $. Then, for $t \in [t_*+(n-1)\tau, t_*+n \tau]$ we obtain
\begin{align}
\begin{aligned}
\frac{d}{d t}( \omega_i(t)- &\omega_j(t)) = \frac{\kappa}{N-1} \sum_{k \neq i } \cos (\theta_k(t-\tau_{ik})-\theta_i(t))(\omega_k(t-\tau_{ik})-\omega_i(t))\cr
& - \frac{\kappa}{N-1} \sum_{k \neq j } \cos (\theta_k(t-\tau_{jk})-\theta_j(t)) (\omega_k(t-\tau_{jk})-\omega_j(t))\cr
& = \frac{\kappa}{N-1} \sum_{k \neq i } \cos (\theta_k(t-\tau_{ik})-\theta_i(t)) (\omega_k(t-\tau_{ik})-M_{n-1}^*+M_{n-1}^*- \omega_i(t)) \cr
& + \frac{\kappa}{N-1} \sum_{k \neq j } \cos (\theta_k(t-\tau_{jk})-\theta_j(t)) (\omega_j(t)-m_{n-1}^*+m_{n-1}^*-\omega_k(t-\tau_{jk})) \cr
& := S_1+S_2.
\end{aligned} \label{eq}
\end{align}
Since $t \in [t_*+(n-1)\tau,t_*+ n \tau]$, we get $ t-\tau_{ij} \in [t_*+(n-2)\tau,t_*+ (n-1) \tau]$ for all $i,j=1,\dots,N$. Thus, it follows from Lemma \ref{3.1} with $T=t_*+(n-2)\tau$ that 
\[
m_{n-1}^* \leq \omega_k(t)\leq M_{n-1}^* \quad \mbox{and} \quad m_{n-1}^* \leq \omega_k(t-\tau_{ik})\leq M_{n-1}^*
\] 
for all $i,k=1,...,N.$ Then, we estimate
\begin{align}
\begin{aligned}
S_1 & = \frac{\kappa}{N-1} \sum_{k \neq i } \cos (\theta_k(t-\tau_{ik})-\theta_i(t)) (\omega_k(t-\tau_{ik}) -M_{n-1}^*) \cr
& \quad + \frac{\kappa}{N-1} \sum_{k \neq i } \cos (\theta_k(t-\tau_{ik})-\theta_i(t)) (M_{n-1}^*-\omega_i(t)) \cr
& \leq \frac{\kappa}{N-1} \xi_* \sum_{k \neq i}  (\omega_k(t-\tau_{ik})-M_{n-1}^*) + \kappa  (M_{n-1}^*-\omega_i(t)),
\end{aligned} \label{s1}
\end{align}
and
\begin{align}
\begin{aligned}
S_2 & := \frac{\kappa}{N-1} \sum_{k \neq j } \cos (\theta_k(t-\tau_{jk})-\theta_j(t)) (\omega_j(t)-m_{n-1}^*) \cr
& \quad + \frac{\kappa}{N-1} \sum_{k \neq j } \cos (\theta_k(t-\tau_{jk})-\theta_j(t)) (m_{n-1}^*- \omega_k(t-\tau_{jk})) \cr
& \leq \kappa (\omega_j(t) -m_{n-1}^*)  + \frac{\kappa}{N-1} \xi_* \sum_{k \neq j} (m_{n-1}^*- \omega_k(t-\tau_{jk})),
\end{aligned} \label{s2}
\end{align}
where $\xi_*$ is the positive constant given in \eqref{lower bound}. 
Putting \eqref{s1} and \eqref{s2} in \eqref{eq}, we have
\begin{align*}
\begin{aligned}
\frac{d}{d t} (\omega_i(t)- &\omega_j(t))\leq \kappa (M_{n-1}^*-m_{n-1}^*) - \kappa (\omega_i(t)-\omega_j(t))\cr
& + \frac{\kappa}{N-1} \xi_* \sum_{k \neq i}  (\omega_k(t-\tau_{ik})-M_{n-1}^*) \cr
& + \frac{\kappa}{N-1} \xi_* \sum_{k \neq j} (m_{n-1}^*- \omega_k(t-\tau_{jk})).
\end{aligned}
\end{align*}

Notice that, since $\omega_k(t-\tau_{ik})-M_{n-1}^* \leq 0$ for all $i,k=1,\dots,N$ and $t \in [t_*+(n-1)\tau, t_*+n\tau]$ and $j \neq i,$ we can write 
$$ \sum_{k \neq i} (\omega_k(t-\tau_{ik}) -M_{n-1}^*) \leq \omega_j(t-\tau_{ij}) -M_{n-1}^*. $$
Analogously, 
$$ \sum_{k \neq j} (m_{n-1}^*-\omega_k(t-\tau_{jk})) \leq m_{n-1}^*-\omega_i(t-\tau_{ji}). $$
Therefore, we obtain
\begin{equation*}
\begin{split}
\frac{d}{d t}(\omega_i(t)- \omega_j(t))&  \leq \kappa \Big(1-\frac{\xi_*}{N-1}\Big)(M_{n-1}^*-m_{n-1}^*) - \kappa(\omega_i(t)-\omega_j(t)) \\
&  + \frac{\kappa}{N-1} \xi_* (\omega_j(t-\tau_{ij})- \omega_i(t-\tau_{ji})) \\
& \leq \kappa \Big(1-\frac{\xi_*}{N-1}\Big)(M_{n-1}^*-m_{n-1}^*) - \kappa ( \omega_i(t)-\omega_j(t)),
\end{split}
\end{equation*}
where we used the assumption $ \omega_i(t)-\omega_j(s)\geq 0$ for all $t,s \in  [t_*+(n-2)\tau,t_*+ n\tau] $. 

Applying now the Gr\"onwall's lemma over $[t, t_*+n\tau],$ with $t \in [t_*+(n-1)\tau, t_*+n\tau],$ we deduce
\begin{equation*}
\begin{split}
\omega_i(t)-\omega_j(t)& \leq e^{-\kappa(t-t_*-(n-1)\tau)}( \omega_i(t_*+(n-1)\tau)-\omega_j(t_*+(n-1)\tau)) \\
& + \Big( 1- \frac{\xi_*}{N-1} \Big) (M_{n-1}^*-m_{n-1}^*)(1-e^{-\kappa(t-t_*-(n-1)\tau)}).
\end{split}
\end{equation*}
For $t=t_*+n\tau,$ we get
\begin{equation*}
\begin{split}
d_{\omega}(t_*+n\tau)& \leq e^{-\kappa\tau}(\omega_i(t_*+(n-1)\tau)-\omega_j(t_*+(n-1)\tau))\\
&\quad
+ \Big( 1- \frac{\xi_*}{N-1} \Big) (M_{n-1}^*-m_{n-1}^*)(1-e^{-\kappa \tau}) \\
& \leq \Big( 1- \frac{\xi_*}{N-1}(1-e^{-\kappa \tau})\Big)D_{\omega}^*(n-2),
\end{split}
\end{equation*}
where we used that $M_{n-1}^*-m_{n-1}^* \leq D_{\omega}^*(n-1)$ and the monotonicity property of $D_{\omega}^*(n).$ 

Finally, we set
\[
C:= \max \Big \{ 1-e^{-2\kappa\tau}, 1-\frac{\xi_*}{N-1}(1-e^{-\kappa \tau}) \Big \} > 0
\]
to conclude the desired result.
\end{proof}

Now, we are ready to prove the exponential synchronization estimate for the system  \eqref{main_eq} with \eqref{IC2}. \\
\begin{proof}[Proof of Theorem \ref{theorem_main1}]
Let $\{\theta_i(t)\}_{i=1}^N$ a global classical solution of  \eqref{main_eq}. We claim that
$$ D_{\omega}^*(n+1) \leq \tilde{C} D_{\omega}^*(n-2), \quad \forall \,n \geq 2, $$
for a suitable constant $\tilde{C} \in (0,1).$
Indeed, for $n\geq 2$, applying Lemmas \ref{4.4} and \ref{3.4},  we obtain
\begin{align*}
\begin{aligned}
D_{\omega}^*(n+1) & \leq e^{-\kappa \tau} d_\omega(t_*+n \tau)+(1-e^{-\kappa \tau})D_{\omega}^*(n) \cr
& \leq e^{-\kappa \tau} C D_{\omega}^*(n-2) +  (1-e^{-\kappa \tau})D_{\omega}^*(n) \cr
&  \leq e^{-\kappa \tau} C D_{\omega}^*(n-2) + (1-e^{-\kappa \tau})D_{\omega}^*(n-2) \cr
& = D_{\omega}^*(n-2) [1-e^{-\kappa \tau}(1-C) ].
\end{aligned}
\end{align*}
Denoting $$ \tilde{C}:=  [1-e^{-\kappa \tau}(1-C) ], $$ 
we have the claim. This implies that 
$$ D_{\omega}^*(3n) \leq \tilde{C}^n D_{\omega}^*(0), \quad \forall \, n \geq 1. $$
Then, we have that 
$$ D_{\omega}^*(3n) \leq e^{-3n \tau\gamma} D_{\omega}^*(0), \quad \forall \, n \in \mathbb{N}, $$
with  
$$ \gamma := \frac{1}{3\tau} \ln\left (\frac{1}{\tilde{C}}\right ). $$
Now, fix 
$i,j=1,...,N.$ For all $t\geq t_*-\tau,$ we get $t \in [t_*+3n\tau-\tau, t_*+3n\tau+2\tau]$ for some $n \in \mathbb{N}$. Then, by applying Lemma \ref{3.2}, we can write 
$$ \vert \omega_i(t) - \omega_j(t) \vert \leq D_{\omega}^*(3n) \leq  e^{-3n \tau\gamma} D_{\omega}^*(0). $$
Since $t \leq t_*+3n \tau+2\tau,$ we have that 
$$ \vert \omega_i(t) - \omega_j(t) \vert \leq e^{-\gamma t} e^{\gamma (t_*+2\tau)} D_{\omega}^*(0).  $$
Finally, passing to the maximum on $i,j=1,...,N,$ we find that
$$ d_\omega(t) \leq e^{-\gamma(t-t_*-2\tau)}D_{\omega}^*(0), $$
and this completes the proof.
\end{proof}

%
%
%
%
%
%
%
%
\section{Numerical simulations}\label{sec_numer}
In this section, we present numerical experiments concerning the dynamics of solutions to the system \eqref{main_eq2} focusing on how these dynamics depend on the network structure and the magnitude of time delays. Specifically, we perform simulations for the all-to-all and strongly coupled system of 10 oscillators. We note that we do not seek to satisfy every condition of our main theorems due to their restrictive nature; instead, we perform simulations with more natural choices of parameters to illustrate the behavior of the model under varying couplings and time delays. 

For all of the numerical simulations, the natural frequencies were generated according to a zero-centered uniform distribution, rounded to three decimal places, yielding the following values:
\[
\Omega \approx (-0.563,  0.839, -0.119 ,  0.904 , -0.49349812,-0.063,  0.603,  0.979, -0.101, -0.060).
\]
The initial phases were set to be constant and drawn from a random uniform distribution in the half circle:
\[
\theta_0(s) \approx (1.714, 2.892, 2.684, 1.543, 1.081, 0.007, 2.025, 1.012, 1.228, 1.955),\quad s \in -[\tau,0].
\]

We utilized the JiTCDDE package from the JiTC$^*$DE toolbox \cite{Ans18} for Python, which provides an extension to the commonly used SciPy ODE, allowing for the simulation of delay differential equations.

We consider a standardized set of time delays $\tau_{ij} \in [0,1]$, which may then be scaled as we desired:
\[
\left(\tau_{ij}\right) = \begin{pmatrix}
0.941 & 0.432 & 0.440 & 0.953 & 0.497 & 0.126 & 0.941 & 0.501 & 0.361 & 0.901 \\
0.017 & 0.948 & 0.088 & 0.075 & 0.942 & 0.478 & 0.180 & 0.982 & 0.335 & 0.941 \\
0.459 & 0.368 & 0.573 & 0.911 & 0.980 & 0.696 & 0.368 & 0.864 & 0.924 & 0.543 \\
0.712 & 0.871 & 0.684 & 0.650 & 0.381 & 0.600 & 0.243 & 0.120 & 0.259 & 0.446 \\
0.345 & 0.702 & 0.383 & 0.505 & 0.291 & 0.089 & 0.717 & 0.224 & 0.465 & 0.594 \\
0.673 & 0.415 & 0.279 & 0.893 & 0.644 & 0.248 & 0.768 & 0.537 & 0.420 & 0.097 \\
0.222 & 0.105 & 0.697 & 0.748 & 0.436 & 0.054 & 0.170 & 0.945 & 0.892 & 0.398 \\
0.141 & 0.443 & 0.434 & 0.020 & 0.719 & 0.657 & 0.587 & 0.807 & 0.821 & 0.214 \\
0.070 & 0.369 & 0.881 & 0.776 & 0.255 & 0.733 & 0.567 & 0.276 & 0.721 & 0.284 \\
0.342 & 0.202 & 0.558 & 0.448 & 0.632 & 0.011 & 0.406 & 0.038 & 0.305 & 0.086
\end{pmatrix}.
\]
%
%
%
%
%
%
%
%
\subsection{All-to-all connected case}

 We first consider the all-to-all connected case with a coupling strength of $\kappa = 2$. To observe the effect of time delay, we use different values of time delays $\tau = 0$ and $\tau = 4.91$, obtained by scaling the standardized delays $(\tau_{ij})$ by a factor of 5. In Figure \ref{fig:a2a_t0}, we show the time evolution of phases and frequencies, and the diameters of phases and frequencies are depicted in \ref{fig:a2a_t02}. In particular, the logarithmic scale plot displays the exponential decay rate of convergence of the frequency diameter as expected by Theorem \ref{theorem_main1}.

\begin{figure}[H]
    \centering
    \includegraphics[width = \linewidth]{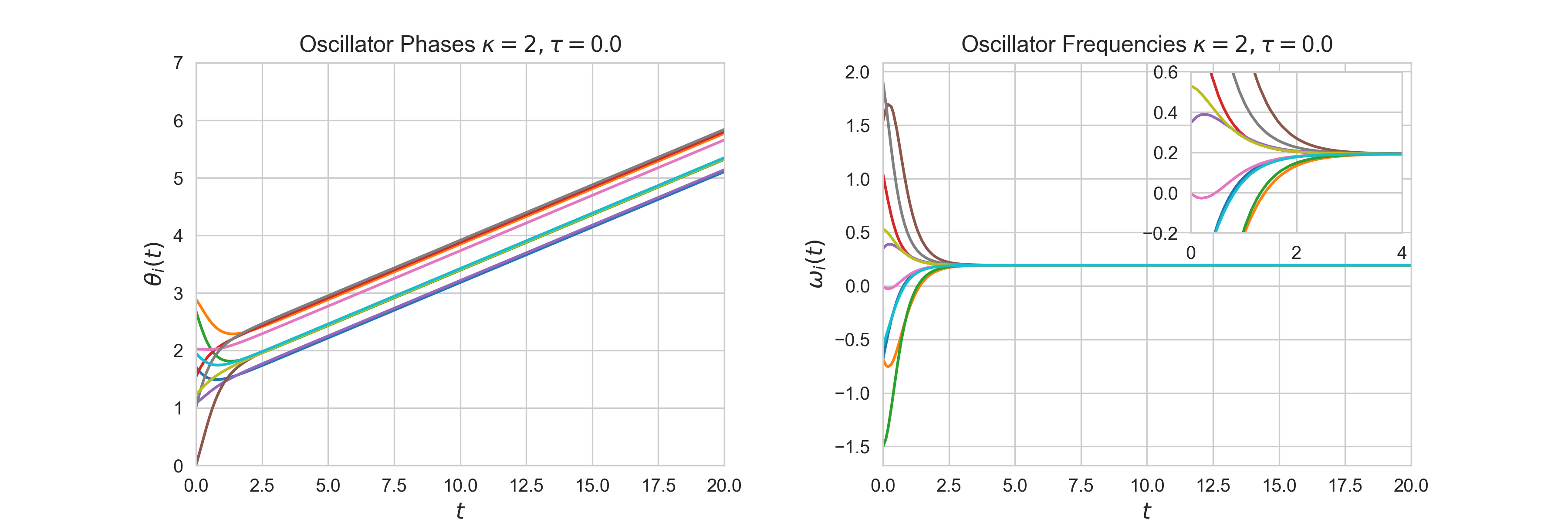}
    \caption{Time evolution of phases and frequencies in the case of all-to-all coupling and no time delay}
    \label{fig:a2a_t0}
\end{figure}
\begin{figure}[H]
    \centering
    \includegraphics[width = \linewidth]{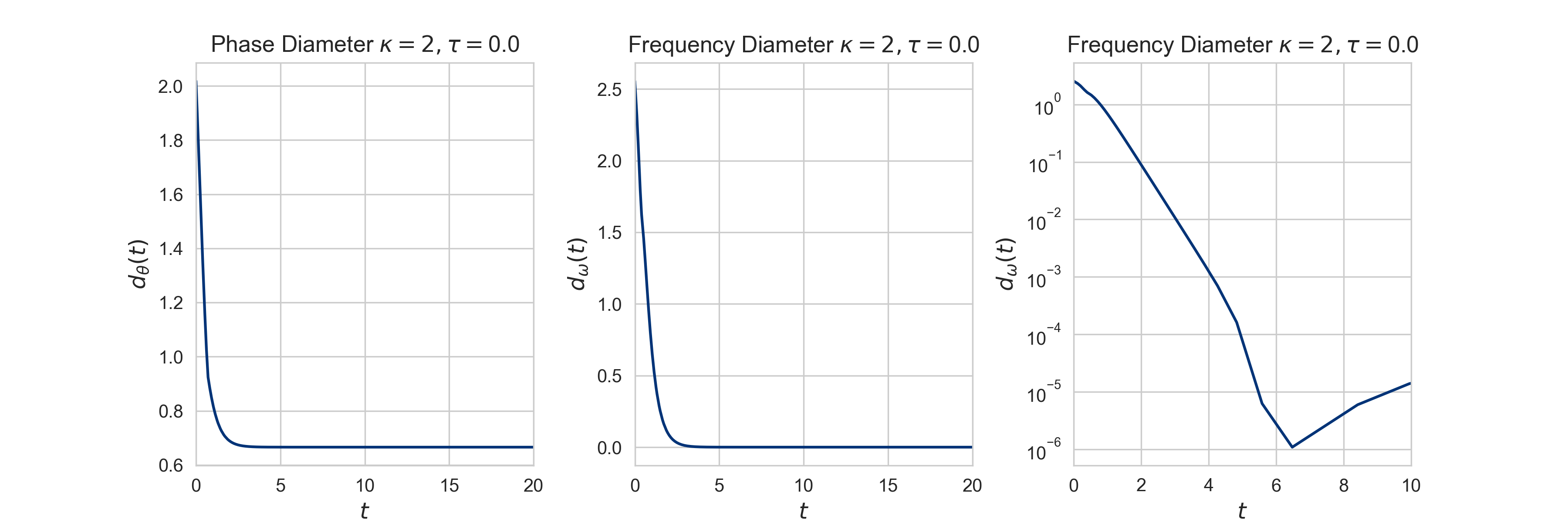}
    \caption{Time evolution of diameters of phases and frequencies in the case of all-to-all coupling and no time delay}   
    \label{fig:a2a_t02}
\end{figure}
 
On the other hand, for the large time delay, $\tau = 4.91$, we observe the oscillatory behavior of solutions and the frequency synchronization occurs at later times compared to the no delay case, see Figures \ref{fig:a2a_t4} and \ref{fig:a2a_t42}. 
\begin{figure}[H]
    \centering
    \includegraphics[width = \linewidth]{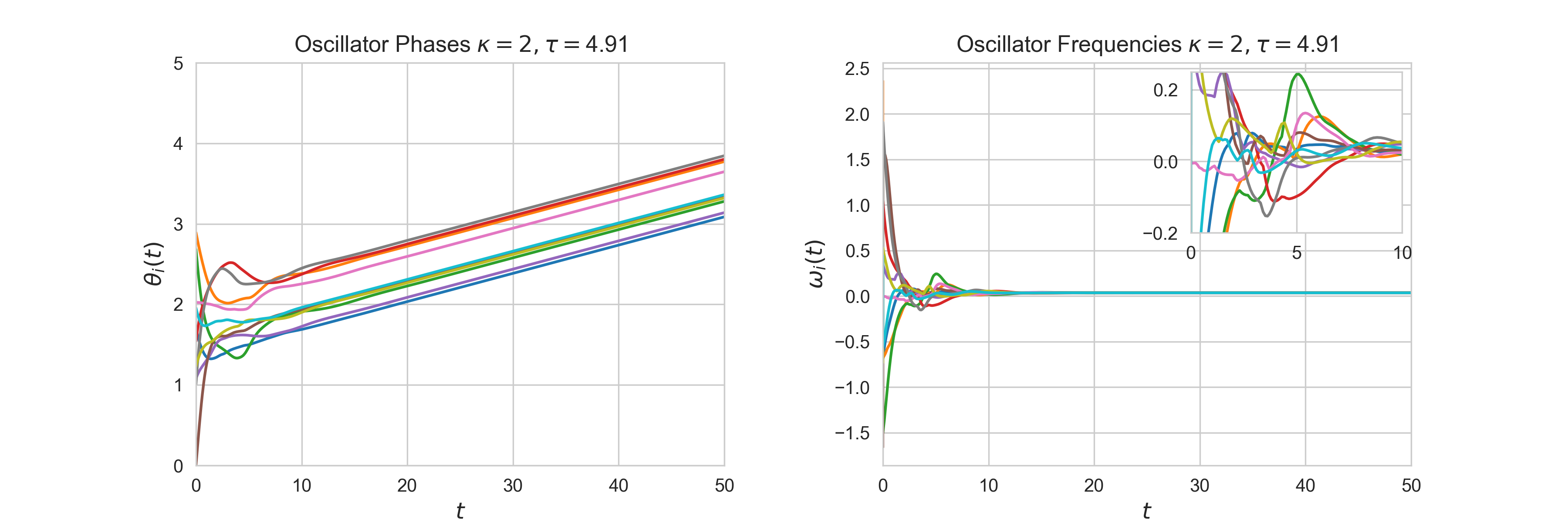}
    \caption{Time evolution of phases and frequencies in the case of all-to-all coupling and time delay}
    \label{fig:a2a_t4}
\end{figure}
\begin{figure}[H]
    \centering
    \includegraphics[width = \linewidth]{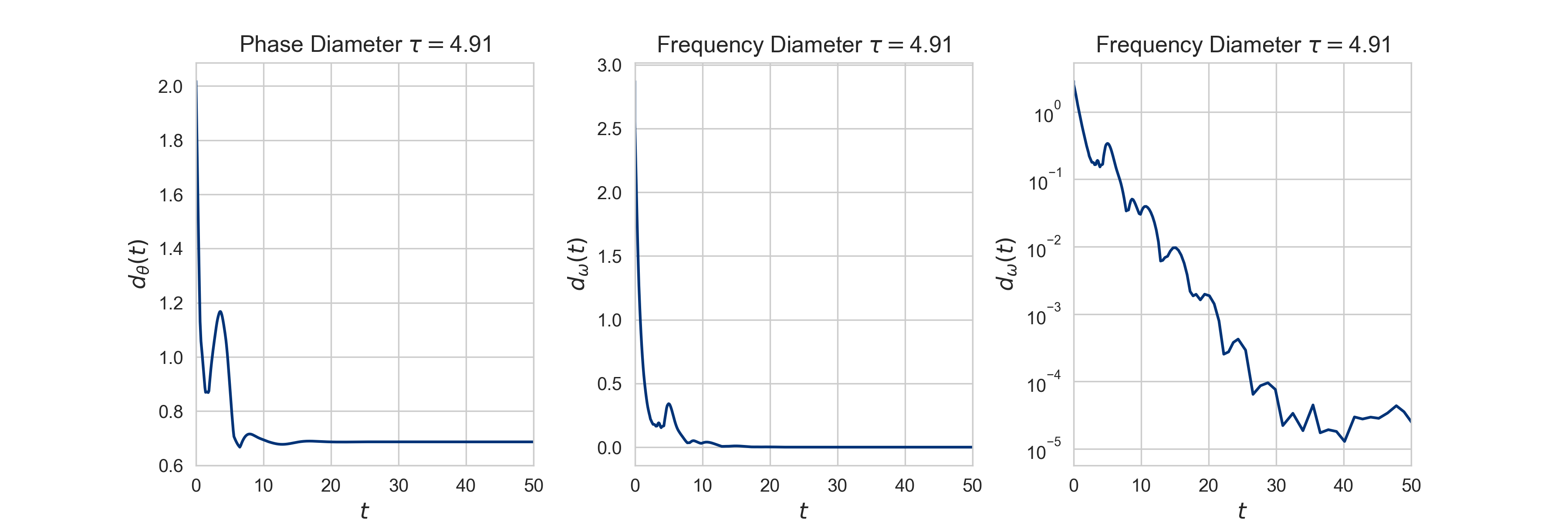}
     \caption{Time evolution of diameters of phases and frequencies in the case of all-to-all coupling and time delay}   
    \label{fig:a2a_t42}
\end{figure}

In both cases, $\tau  = 0$ and $\tau = 4.91$, we achieve synchronization due to the strong connectivity provided by all-to-all coupling with strong coupling strength. The simulations demonstrate how the introduction of time delay affects the synchronization process, leading to delayed but eventual synchronization in the presence of larger delays. 

%
%
%
%
%
%
%
%
\subsection{Strongly connected case}
Next, we investigate the dynamics of solutions in the case of a strongly connected digraph. Precisely, we choose a simple cyclic graph, as depicted in Figure \ref{fig:digraph}. This setup provides a different network structure compared to the all-to-all coupling and allows us to examine how weaker connectivity influences synchronization behavior.

\begin{figure}[H]
  \centering
  \begin{tikzpicture}[->, >=stealth, auto, node distance=2cm, thick, main node/.style={circle, draw, font=\sffamily\Large\bfseries}]

    \begin{scope}[local bounding box=graph]
      \def\numNodes{10}
      \def\circleRadius{3cm}
      \pgfmathtruncatemacro{\angleStep}{360/\numNodes}

      \foreach \x in {1,...,\numNodes}{
        \pgfmathtruncatemacro{\angle}{\angleStep*\x}
        \node[main node] (\x) at (\angle:\circleRadius) {\x};
      }

      \draw[->] (1) to (2);
      \draw[->] (2) to (3);
      \draw[->] (3) to (4);
      \draw[->] (4) to (5);
      \draw[->] (5) to (6);
      \draw[->] (6) to (7);
      \draw[->] (7) to (8);
      \draw[->] (8) to (9);
      \draw[->] (9) to (10);
      \draw[->] (10) to  (1);
    \end{scope}

    \node[right=0.75cm of graph] (matrixlabel) {$\chi = $};
   \matrix [matrix of math nodes,left delimiter=(,right delimiter=),right=of graph] (m)
    {
      0 & 1 & 0 & 0 & 0 & 0 & 0 & 0 & 0 & 0 \\
      0 & 0 & 1 & 0 & 0 & 0 & 0 & 0 & 0 & 0 \\
      0 & 0 & 0 & 1 & 0 & 0 & 0 & 0 & 0 & 0 \\
      0 & 0 & 0 & 0 & 1 & 0 & 0 & 0 & 0 & 0 \\
      0 & 0 & 0 & 0 & 0 & 1 & 0 & 0 & 0 & 0 \\
      0 & 0 & 0 & 0 & 0 & 0 & 1 & 0 & 0 & 0 \\
      0 & 0 & 0 & 0 & 0 & 0 & 0 & 1 & 0 & 0 \\
      0 & 0 & 0 & 0 & 0 & 0 & 0 & 0 & 1 & 0 \\
      0 & 0 & 0 & 0 & 0 & 0 & 0 & 0 & 0 & 1 \\
      1 & 0 & 0 & 0 & 0 & 0 & 0 & 0 & 0 & 0 \\
    };
  \end{tikzpicture}
  \caption{Strongly connected digraph and its adjacency matrix}
  \label{fig:digraph}
\end{figure}
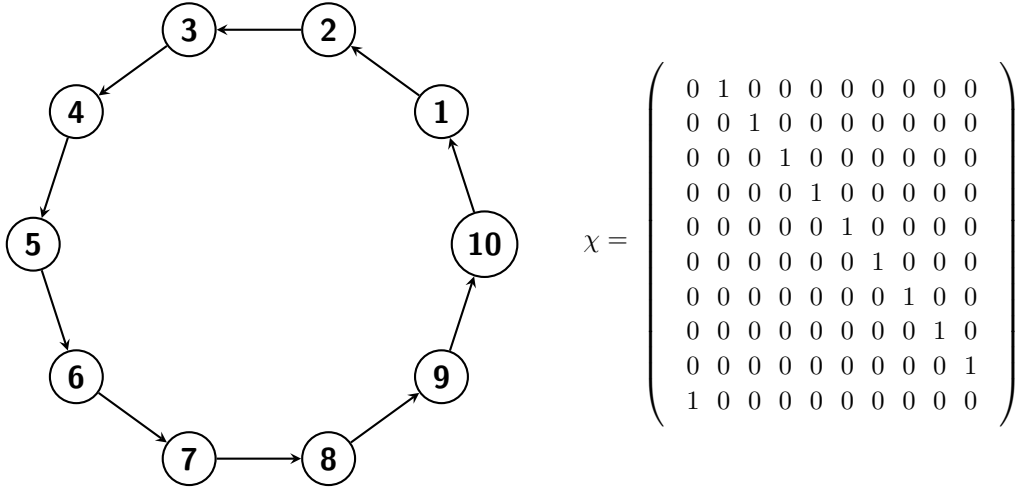

Note that in this case, the system \eqref{main_eq2} reduces to the following time-delayed Kuramoto oscillators unidirectionally coupled in a ring:
\begin{align*}
\dot \theta_i(t) &= \Omega_i + \frac\kappa {N-1}   \sin(\theta_{i+1}(t - \tau_{i (i+1)}) - \theta_i(t)), \quad i = 1,\dots, N-1,\cr
\dot \theta_N(t) &= \Omega_i + \frac\kappa {N-1}   \sin(\theta_1(t - \tau_{N 1}) - \theta_N(t)).
\end{align*}

As illustrated in Figures \ref{fig:sc_t0} and \ref{fig:sc_t02}, when the coupling strength is set to $\kappa = 2$, the system does not achieve frequency synchronization. This lack of synchronization is due to the weaker connectivity among the oscillators compared to the all-to-all coupling case, which indicates that the network structure significantly impacts the synchronization process.

\begin{figure}[H]
    \centering
    \includegraphics[width = \linewidth]{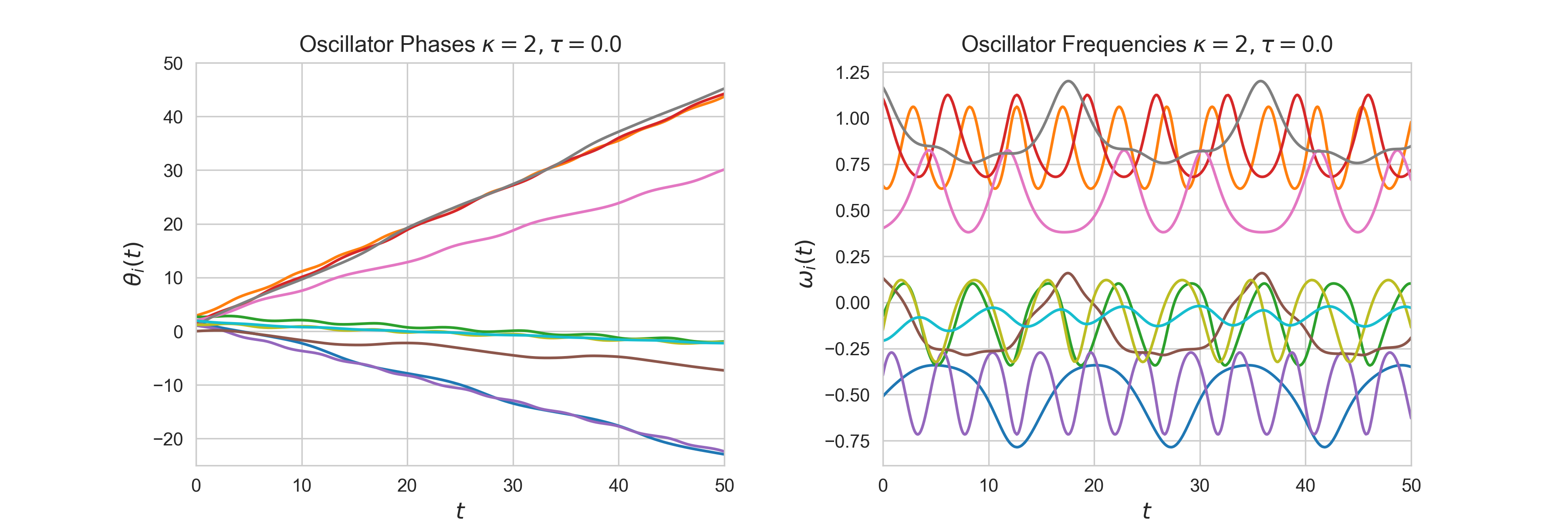}
    \caption{Time evolution of phases and frequencies in the case of strongly connected and no time delay $(\kappa = 2)$}
    \label{fig:sc_t0}
\end{figure}
\begin{figure}[H]
    \centering
    \includegraphics[width = \linewidth]{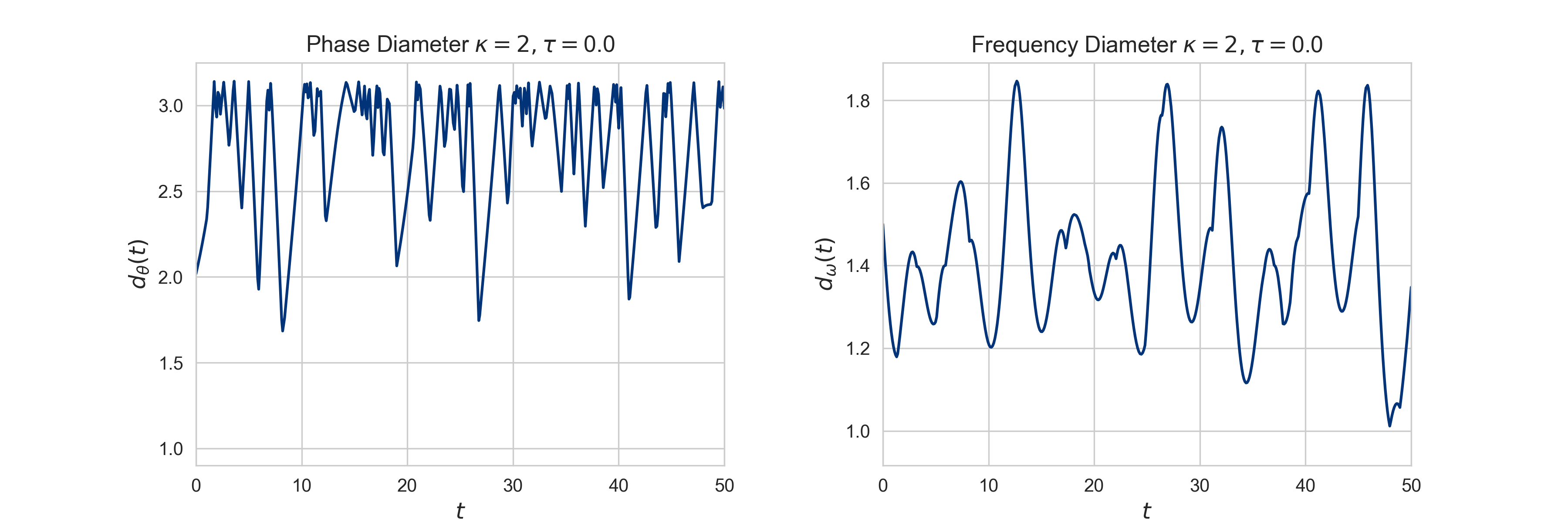}
    \caption{Time evolution of diameters of phases and frequencies in the case of strongly connected and no time delay $(\kappa = 2)$}
    \label{fig:sc_t02}
\end{figure}

On the other hand, increasing the coupling strength to $\kappa = 8$ results in frequency synchronization, as shown in Figures \ref{fig:k8t0} and \ref{fig:k8t02}. Despite achieving synchronization, we observe that the solutions exhibit oscillatory behavior before the synchronization. The synchronization occurs at much later times compared to the all-to-all coupling case, even with the stronger coupling strength. This indicates that stronger coupling is required in a less connected network to achieve similar synchronization results. We also note that in the case of synchronization, the decay rate of the frequency diameter is exponential, with some oscillatory perturbations. 

\begin{figure}[H]
    \centering
    \includegraphics[width = \linewidth]{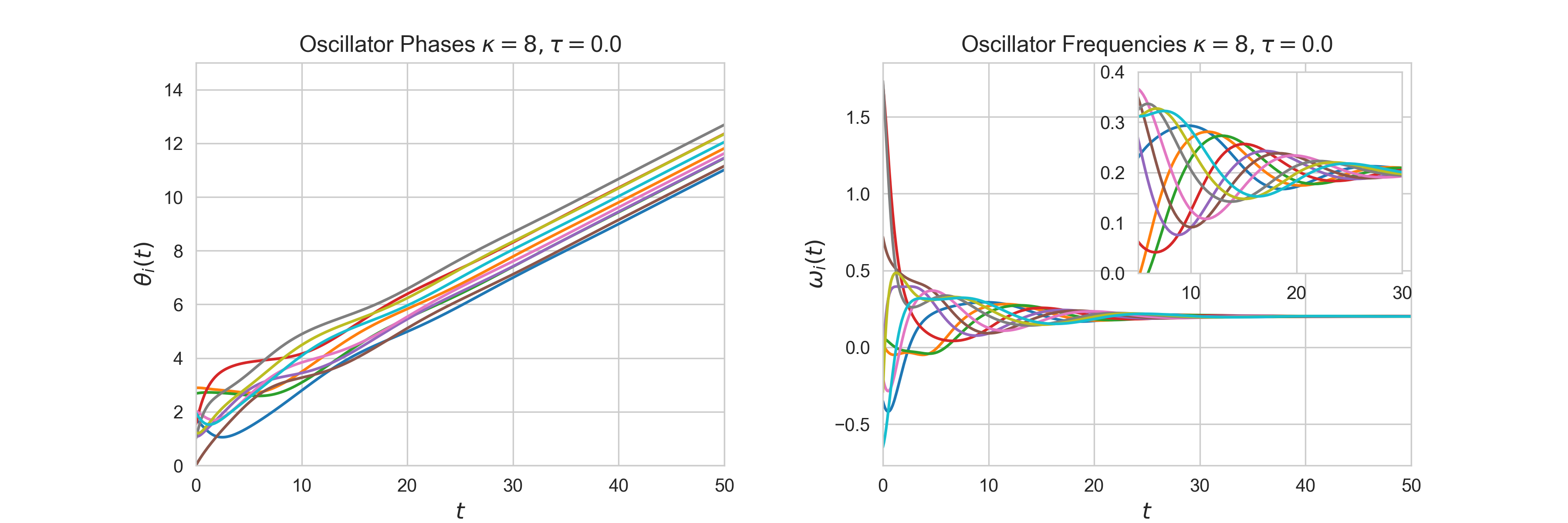}
    \caption{Time evolution of phases and frequencies in the case of strongly connected and no time delay $(\kappa = 8)$}
    \label{fig:k8t0}
\end{figure}
\begin{figure}[H]
    \centering
    \includegraphics[width = \linewidth]{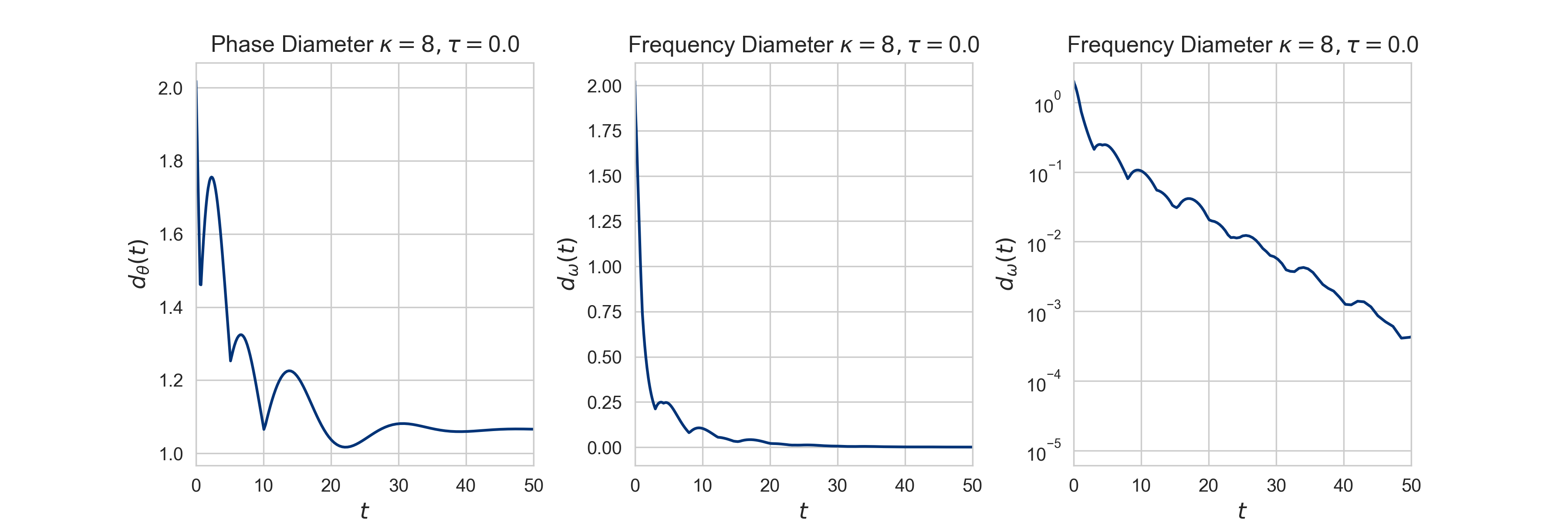}
        \caption{Time evolution of diameters of phases and frequencies in the case of strongly connected and no time delay $(\kappa = 8)$}
    \label{fig:k8t02}
\end{figure}
 
Finally, we examine the effect of large time delays by scaling the standardized delays $(\tau_{ij})$ by a factor of 30, resulting in $\tau = 29.47$. Figures \ref{fig:k8t29} and \ref{fig:k8t292} show the time evolution of phases and frequencies, along with their diameters. In this setting, we observe chaotic-type dynamics due to the combined effect of weak connectivity and strong time delays. The synchronization behavior cannot be achieved under these conditions, highlighting the critical influence of time delays on the dynamics of the system.

\begin{figure}[H]
    \centering
    \includegraphics[width = \linewidth]{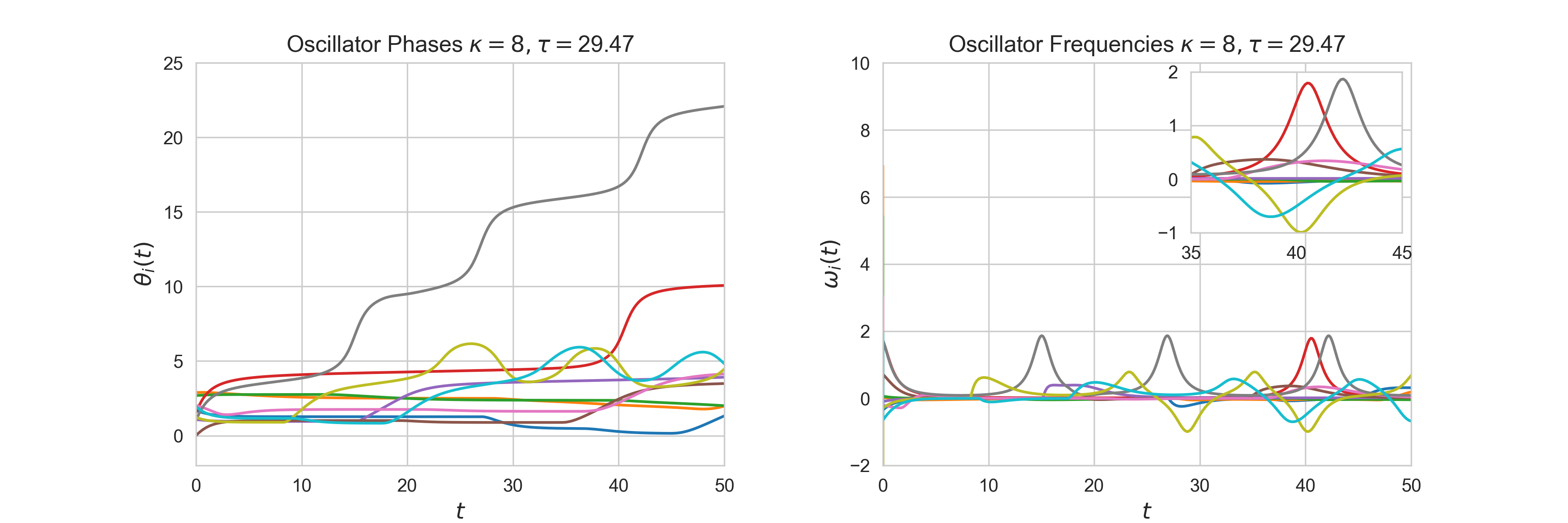}
    \caption{Time evolution of phases and frequencies in the case of strongly connected and time delay $(\kappa = 8)$}
    \label{fig:k8t29}
\end{figure}
\begin{figure}[H]
    \centering
    \includegraphics[width = \linewidth]{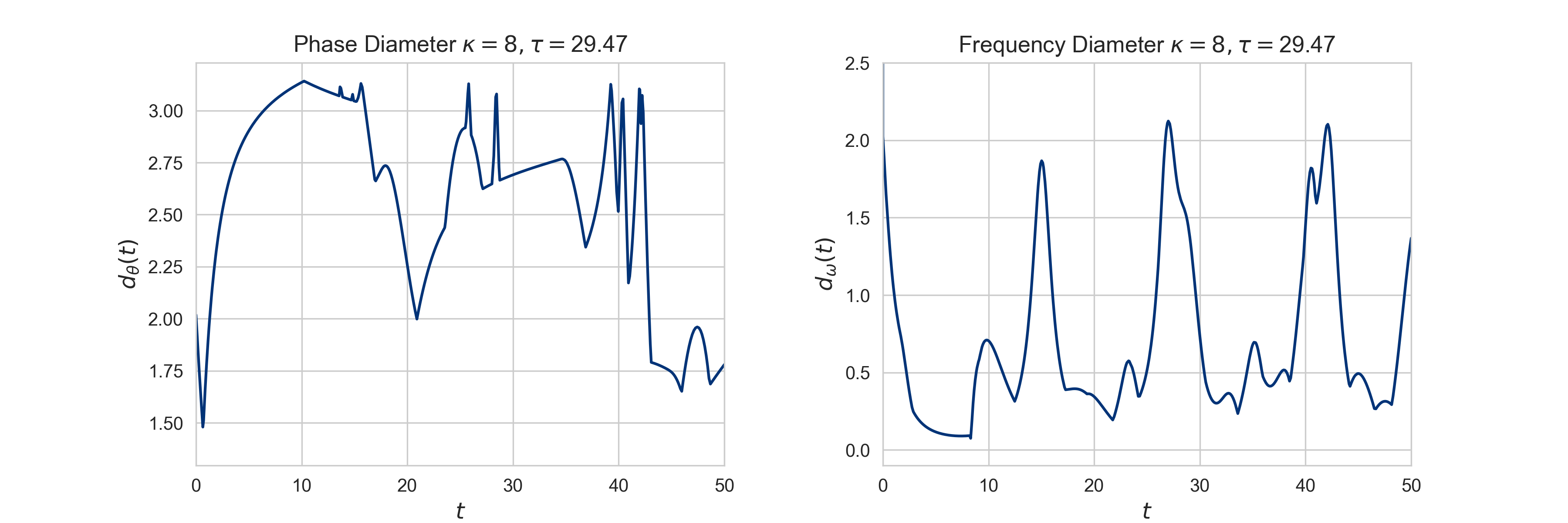}
            \caption{Time evolution of diameters of phases and frequencies in the case of strongly connected and time delay $(\kappa = 8)$}
    \label{fig:k8t292}
\end{figure}

In summary, our numerical experiments demonstrate that network structure and time delays significantly affect the dynamics of coupled oscillators. In strongly connected digraphs, stronger coupling strengths are required to achieve synchronization compared to all-to-all connected case. Moreover, large time delays introduce complex dynamics and hinder synchronization, emphasizing the delicate balance between coupling strength, network topology, and time delays in such systems.
%
%
%
%
%
%
%
%
\section*{Acknowledgments}
The work of Y.-P. Choi is supported by NRF grant no. 2022R1A2C1002820, and
the work of C. Pi\-gnot\-ti is partially
supported by PRIN 2022  (2022238YY5) {\it Optimal control problems: analysis,
approximation and applications}, by
PRIN-PNRR 2022 (P20225SP98) {\it Some mathematical approaches to climate change and its impacts}, and by INdAM GNAMPA Project {\it ``Modelli alle derivate parziali per interazioni multiagente non 
simmetriche"}(CUP E53C23001670001).

%
%
%
%


\begin{thebibliography}{10}

\bibitem{ABPRS05} J. A. Acebron, L. L. Bonilla, C. J. P. P\'erez Vicente, F. Ritort, and R. Spigler, The Kuramoto model: a simple paradigm for synchronization phenomena, Rev. Modern Phys., 77, (2005), 137--185.

\bibitem{Ans18} G, Ansmann, Efficiently and easily integrating differential equations with JiTCODE, JiTCDDE, and JiTCSDE, Chaos: An Interdisciplinary Journal of Nonlinear Science, 28, (2018), 043116.

\bibitem{BS00} N. J. Balmforth and R. Sassi, A shocking display of synchrony, Phys. D, 143, (2000), 21--55.

\bibitem{BHD10} M. Breakspear, S. Heitmann, and A. Daffertshofer, Generative models of cortical oscillations: neurobiological implications of the Kuramoto model, Front. Hum. Neurosci., 4, (2010), 190.

\bibitem{BB66} J. Buck, E. Buck, Biology of synchronous flashing of fireflies, Nature, 211, (1966), 562.

\bibitem{CCHKK14} J. A. Carrillo, Y.-P. Choi, S.-Y. Ha, M.-J. Kang, and Y. Kim, Contractivity of transport distances for the kinetic Kuramoto equation, J. Stat. Phys., 156, (2014), 395--415.

\bibitem{CCP19} J. A. Carrillo, Y.-P. Choi, and L. Pareschi, Structure preserving schemes for the continuum Kuramoto model: phase transitions, J. Comput. Phys., 376, (2019), 365--389.



\bibitem{CHJY12} Y.-P. Choi, S. Y. Ha, S. Jung, and Y. Kim, Asymptotic formation and orbital stability of phase-locked states for the Kuramoto model, Phys. D, 24,1 (2012), 735--754.

\bibitem{CP21} Y.-P. Choi and C. Pignotti, Exponential synchronization of Kuramoto oscillators with time delayed coupling, Commun. Math. Sci., 19, (2021), 1429--1445.

\bibitem{CL19} Y.-P. Choi and Z. Li, Synchronization of nonuniform Kuramoto oscillators for power grids with general connectivity and dampings, Nonlinearity, 32, (2019), 559--583.

\bibitem{CS09} N. Chopra and M. W. Spong, On exponential synchronization of Kuramoto oscillators, IEEE Trans. Automat. Control, 54, (2009), 353--357.

\bibitem{CP23} E. Continelli and C. Pignotti,  Consensus for the Hegselmann-Krause model with time variable time delays, Math. Methods Appl. Sci., 46, (2023), 18916-18934.

\bibitem{DHK20} J.-G. Dong, S.-Y. Ha, and D. Kim, Emergent Behavior of the Kuramoto model with a time-delay on a general digraph, SIAM J. Appl. Dyn. Syst., 19, (2020), 304--328.

\bibitem{DB11} F. D\"orfler and F. Bullo, On the critical coupling for Kuramoto oscillators, SIAM J. Appl. Dyn. Syst., 10, (2011), 1070--1099.

\bibitem{DB12} F. D\"orfler and F. Bullo, Synchronization and transient stability in power networks and nonuniform Kuramoto oscillators, SIAM J. Contorl Optim., 50, (2012), 1616--1642.

\bibitem{HLZ20} S.-Y. Ha, Z. Li, and X. Zhang, On the critical exponent of the one-dimensional Cucker-Smale model on a general graph, Math. Models Methods Appl. Sci., 30, (2020), 1653--1703.

\bibitem{HLX13} S.-Y. Ha, Z. Li, and X. Xue, Formation of phase-locked states in a population of locally interacting Kuramoto oscillators, J. Differential Equations, 255, (2013), 3053--3070.


\bibitem{HR20} S.-Y. Ha and S. W. Ryoo, Asymptotic phase-Locking dynamics and critical coupling strength for the Kuramoto model, Commun. Math. Phys., 377, (2002), 811--857.

\bibitem{Has21} J. Haskovec, Direct proof of unconditional asymptotic consensus in the Hegselmann-Krause model with transmission-type delay, Bull. London Math. Soc., 53, (2021), 1312--1323.


\bibitem{Ku84} Y. Kuramoto, Chemical Oscillations, Waves and Turbulence, Springer-Verlag, Berlin, 1984.

\bibitem{MS07} R. E. Mirollo and S. H. Strogatz, The spectrum of the partially locked state for the Kuramoto model of coupled oscillator, J. Nonlinear Sci., 17, (2007), 309--347.


\bibitem{PRK03} A. Pikovsky, M. Rosenblum, and J. Kurths, Synchronization: A Universal Concept in Nonlinear Sciences, Cambridge University Press, Cambridge, 2001.

\bibitem{Rodriguez} M. Rodriguez Cartabia, Cucker-Smale model with time delay, Discrete Contin. Dynam. Syst., 42, (2022), 2409--2432.

\bibitem{Str00} S. H. Strogatz, From Kuramoto to Crawford: Exploring the onset of synchronization in populations of coupled oscillators, Phys. D, 143, (2000), 1--20.

\bibitem{Str03} S.H. Strogatz, Sync: How Order Emerges From Chaos in the Universe, Nature, and Daily Life, Hyperion Books, 2003.


\bibitem{Wi67} A. T. Winfree, The Geometry of Biological Time, Springer New York, NY, 2001.


\bibitem{ZZ23} X. Zhang and T. Zhu, Emergence of synchronization in Kuramoto model with general digraph, Discrete Contin. Dyn. Syst. Ser. B, 28, (2023), 2335--2390.

\bibitem{ZLQYWL22} S. Zheng, Z. Liang, Y. Qu, Q. Yu, H. Wu, and Q. Liu, Kuramoto model-based analysis reveals oxytocin effects on brain network dynamics, International Journal of Neural Systems, 32, (2022), 2250002.

\bibitem{Zhu22} T. Zhu, Emergence of synchronization in Kuramoto model with frustration under general network topology, Netw. Heterog. Media, 17, (2022). 255--291.

 \bibitem{ZhuNHM} T. Zhu, Synchronization of the generalized Kuramoto model with time delay and frustration, Netw. Heterog. Media, 18,(2023), 1772--1798.

\end{thebibliography}
\end{document}